\definecolor{mygray}{gray}{0.8}
\def\toitself{\righttoleftarrow}
\def\Z{{\mathbb Z}}\def\T{{\mathbb T}}\def\R{{\mathbb R}}
\def\e{\eta}
\def\g{\gamma}
\def\s{\sigma}
\def\be{\beta}\def\d{\delta}
\def\be{\beta}
\def\th{\theta}
\def\D{\Delta}\def\G{\Gamma}
\def\b#1{\lbrace#1\rbrace}
\def\a#1{\left|#1\right|}
\def\l#1{\langle #1\rangle}
\def\<{\langle}
\def\>{\rangle}
\theoremstyle{plain}
\newtheorem{theo}{Theorem}[section]
\newtheorem{Main}{Theorem}
\newtheorem*{Main*}{Main Theorem}
\newcommand{\ph}{\varphi}
\def\a{\alpha}
\def\d{\delta}
\def\ti{\tilde}
\def\e{\varepsilon}
\def\pa{\partial}
\def\s{\sigma}
\def\th{\theta}
\let\newpf\proof \let\proof\relax
\newcommand{\ba}{\overline{A}}
\newcommand{\cF}{\mathcal{F}}
\def\be{\begin{equation}}
\def\ee{\end{equation}}
\def\ba{{\begin{align}}}
\def\ea{{\end{align}}}
\def\bm{\begin{pmatrix}}
\def\em{\end{pmatrix}}
\def\a{{\alpha}}
\def\g{{\gamma}}
\def\0{{\mathbf 0}}
\newtheorem{thm}{Theorem}[section]
\newtheorem{cor}[thm]{Corollary}
\newtheorem{lem}[thm]{Lemma}
\newtheorem{lemma}[thm]{Lemma}
\newtheorem{prop}[thm]{Proposition}
\theoremstyle{remark}
\newtheorem{rem}{Remark}[section]
\theoremstyle{definition}
\newenvironment{proof}{ \noindent{\it Proof.}\quad}{\ \hfill $\Box$\vskip .2cm}
\def\ssm{\smallsetminus}
\renewcommand{\setminus}{\ssm}
\newcommand{\N}{{\mathbb N}}
\newcommand{\h}{{\bf h}}
\def\B0{{\bold{0}}}
\def\b{\beta}
\def\a{\alpha}
\def\l{\lambda}
\def\be{\begin{equation}}
\def\ee{\end{equation}}
\def\cA{\mathcal{A}}
\def\cA{\mathcal{A}}
\def\cF{\mathcal{F}}
\def\cG{\mathcal{G}}
\def\cH{\mathcal{H}}
\def\cT{\mathcal{T}}
\def\Empty{}
\newcommand\oplabel[1]{
  \def\OpArg{#1} \ifx \OpArg\Empty {} \else
  	\label{#1}
  \fi}
\newcommand{\comm}[1]{}
\newcommand{\comment}[1]{}
\begin{document}

\title[On invariant circles accumulating separatrices]{On the accumulation of separatrices  by invariant circles}

\author{A. Katok${}^*$ and  R. Krikorian}

\address{
RK: Department of Mathematics, CNRS UMR 8088, 
CY Cergy Paris Université (University of Cergy-Pontoise),  2, av. Adolphe Chauvin F-95302 Cergy-Pontoise, France.} 
\email{raphael.krikorian@cyu.fr.}

\thanks{* A preliminary version of this paper was discussed by the authors some months before Anatole Katok passed away in April 2018.  \\ A. Katok was partially supported by NSF grant DMS 1602409. R. Krikorian was supported by a Chaire d'Excellence LABEX MME-DII, the project ANR BEKAM : ANR-15-CE40-0001 and an AAP project from CY Cergy Paris  Université.}

%\date{\today}

\maketitle
\begin{abstract}Let $f$ be   a smooth symplectic  diffeomorphism of $\R^2$  admitting a (non-split) separatrix  associated to  a hyperbolic fixed point. We prove  that if  $f$ is a perturbation of the time-1 map of a symplectic autonomous vector field, this separatrix is accumulated by a positive measure  set of  invariant circles. On the other hand, we provide examples of smooth symplectic diffeomorphisms  with a Lyapunov unstable non-split separatrix that are not accumulated by invariant circles.
\end{abstract}
\section{Introduction}

A theorem by M.R. Herman,``Herman's last geometric theorem'', cf. \cite{HermanICM98},  \cite{FayadKrikorian}),  asserts that if a    smooth orientation and  area preserving  diffeomorphism $f$  of the 2-plane $\R^2$ (or the 2-cylinder $\R/\Z\times\R$) admits a KAM circle $\Sigma$  (by definition, a smooth  invariant curve \footnote{Isotopic in $\R^2\setminus\{o\}$ to a circle centered at the origin in the case $f:\R^2\to\R^2$ or isotopic to $\R/\Z\times\{0\}$ in the cylinder case.} on which the dynamics of $f$ is conjugated to a Diophantine translation) then this KAM circle is accumulated by other KAM circles the union of which has  positive 2-dimensional Lebesgue  measure in any neighborhood of $\Sigma$. In this paper we investigate whether such a phenomenon holds if, instead of being a KAM circle, the invariant set $\Sigma$ is a {\it separatrix} of a hyperbolic fixed (or periodic) point of $f$. 

More precisely, 
we consider the following situation. Let $f:\R^2\to \R^2$, $f:(x,y)\mapsto f(x,y)$, $f(0,0)=(0,0)$ be   a smooth   diffeomorphism which is  symplectic w.r.t. the usual symplectic form $\omega=dx\wedge dy$ ($f^*\omega=\omega$).  We assume that $o:=(0,0)$ is a {\it hyperbolic} fixed point of $f$ (the matrix $Df(o)\in SL(2,\R)$ has distinct real eigenvalues)  and that there exists an $f$-invariant compact connected set $\Sigma\ni o$ such that $\Sigma\setminus\{o\}$ is a nonempty  connected  1-dimensional manifold  included in both  the stable and unstable manifolds $W^{s}_{f}(o)$, $W^{u}_{f}(o)$ associated to $o$:
$$\forall\ (x,y)\in \Sigma,\quad \ \lim_{n\to\pm\infty} f^n(x,y)=o.$$
Note that since $o$ is $f$-hyperbolic,  $\Sigma$ is homeomorphic to a circle and  $\Sigma\setminus\{o\}$ coincides with one of the two connected components of $W^s_{f}(o)\setminus\{o\}$ (resp. $W^u_{f}(o)\setminus\{o\}$).
We shall say that $\Sigma$ is a {\it separatrix} of $f$ associated to the hyperbolic fixed point $o$ or, without referring to the hyperbolic fixed point  $o$, that $\Sigma$ is a separatrix of $f$.

\begin{figure}
\begin{center}
\begin{pspicture}(-1,-1)(10,5)

\psbezier(0,0)(0,4)(0,4)(5,4)
\psbezier(0,0)(5,0)(6,0)(6,2)
\psbezier(6,2)(6,3)(6,4)(5,4)

\psline[linewidth=0.5pt, arrowsize=2pt 10, arrowlength=1,arrowinset=0.7]{>-}(0.5,0)(0,0)
\psline(-0.5,0)(0,0)
\psline[linewidth=0.5pt, arrowsize=2pt 10, arrowlength=1,arrowinset=0.7]{->}(0,0)(0,0.5)

\psline(0,-0.5)(0,0)

\rput(0.5,-0.5){$o$}
\rput(6.5,3.5){$\Sigma$}
\end{pspicture}
\end{center}
\caption{A (non-split) separatrix}
\end{figure}
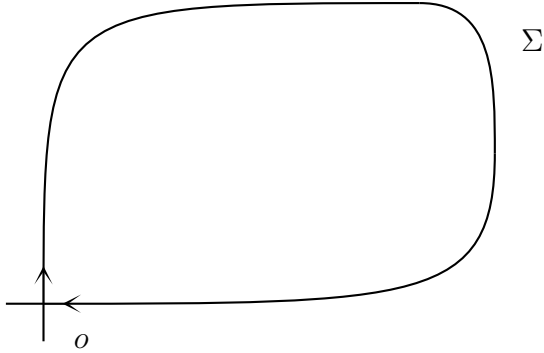

Examples of such diffeomorphisms $f$ can be obtained in  the following way. Let $X_{0}$ be a smooth autonomous {\it Hamiltonian} vector field of the form
\be X_{0}=J\nabla H_{0},\qquad J=\bm 0 & -1\\ 1& 0\em\label{eq:X0}\ee
where $H_{0}:\R^2\to\R$, of the form
$$
H_{0}(x,y)=Q_{0}(x,y)+O^3(x,y),\qquad  \l\in\R^*,
$$ 
(we can assume without loss of generality $\l>0$) is a smooth function. 
The time-1 map $f_{0}:=\phi_{X_{0}}^1$ of $X_{0}$
is a Hamiltonian (in particular symplectic) diffeomorphism of $\R^2$ admitting $o$ as a hyperbolic fixed  point. We assume that it has  a separatrix $\Sigma\ni o$ of the form
$$\Sigma\setminus\{o\}=\{\phi^t_{X_{0}}(p),\ t\in\R\},\qquad \textrm{for \ some}\ p\in \R^2\setminus\{o\}\ \textrm{s.t.}\quad \lim_{t\to\pm\infty}\phi^t_{X_{0}}(p)=o.$$

We now consider a  smooth time-dependent Hamiltonian vector field  $Y:\R/\Z\times \R^2\to \R$, $(t,(x,y))\mapsto Y(t,x,y)$ which is  1-periodic in $t$, symplectic w.r.t. $(x,y)$ and tangent to $\Sigma\setminus\{o\}$:
$$\forall\ t\in\R/\Z,\ \forall \ (x,y)\in \Sigma,\quad  \det(X_{0}(x,y),Y(t,x,y))=0.$$
One can for example choose $Y(t,x,y)=J\nabla F(t,x,y)$ where $F:\R/\Z\times \R^2\to\R$ is a smooth time-dependent Hamiltonian that satisfies
$$\forall\ t\in\R/\Z,\ \forall \ (x,y)\in \Sigma,\quad F(t,x,y)=F(t,0,0).$$
Note that since $o$ is a hyperbolic fixed point of $X_{0}$ one has for all $t$, $Y(t,o)=0$.
For $\e\in\R$ define the 1-periodic in $t$ symplectic vector field $\R^2\to\R^2$
\be X_{\e}^t(x,y):=X_{\e}(t,x,y)=X_{0}(x,y)+\e Y(t,x,y).\label{eq:1.1}\ee
For $\e$ small enough, the    time-0-to-1 map\footnote{If $X(t,z)$ is a time dependent vector field the time-$s$-to-$t$ map of $X$ is defined by $\phi^{t,s}_{X}(z(s))=z(t)$ for any $z(\cdot)$  solution  of $\dot z(t)=X(z(t))$. When $X$ is time-independent the notation $\phi^t_{X}$ stands for $\phi^{t,0}_{X}$. \label{footnote2}} 
\be f_{\e}=\phi^{1,0}_{X_{\e}}\label{deffepsilon}\ee of the symplectic vector field $X_{\e}$
 is a symplectic diffeomorphism of $\R^2$ admitting $o$ as a hyperbolic fixed point and still  $\Sigma$ as a separatrix. Note that $f_{\e}$ is a {\it Hamiltonian} diffeomorphism (for more details on Hamiltonian diffeomorphisms see \cite{Po}).

Here is the analogue of the aforementioned   Last Geometric Theorem of Herman:
\begin{Main}\label{MainA} \label{theo:A}
For any $r\in\N^*$ there exists $\e_{r}>0$, such that, for  any $\e\in ]-\e_{r},\e_{r}[$,     there exists a set of  $f_{\e}$-invariant $C^r$  KAM-circles 
accumulating the separatix $\Sigma$ and  which covers a set of positive Lebesgue measure of $\R^2$  in any neighborhood of $\Sigma$.
\end{Main}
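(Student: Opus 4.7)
\medskip
\noindent\textbf{Proof plan.}
The strategy is to reduce the statement to a quantitative KAM theorem applied on a sequence of annular neighborhoods of $\Sigma$ shrinking to it, working in action-angle coordinates of $X_{0}$ on one side of $\Sigma$.

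\medskip
\noindent\emph{Step 1 (action-angle near $\Sigma$).} Let $U$ be a connected component of $\R^{2}\setminus\Sigma$ bounded by $\Sigma$ and in which the orbits of $X_{0}$ are periodic (we treat each such component separately, if both are of this type). On $U$ construct symplectic Arnold--Liouville action-angle coordinates $(I,\theta)\in(0,I_{*})\times\T$ in which $H_{0}=h_{0}(I)$ and $\phi^{t}_{X_{0}}(I,\theta)=(I,\theta+t\omega(I))$ with $\omega=h_{0}'$, normalized so that $I\to 0^{+}$ corresponds to approaching $\Sigma$. The classical asymptotic analysis at a homoclinic saddle yields
\be
\omega(I)\sim\frac{2\pi\la}{|\log I|},\qquad \omega'(I)\sim\frac{2\pi\la}{I(\log I)^{2}}\qquad(I\to 0^{+}),
\ee
where $\la>0$ is the positive eigenvalue of $DX_{0}(o)$. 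In these coordinates $f_{0}(I,\theta)=(I,\theta+\omega(I))$ is an integrable twist map whose twist degenerates logarithmically at $\Sigma$.

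\medskip
\noindent\emph{Step 2 (the perturbation in action-angle coordinates).} Write $f_{\e}(I,\theta)=(I+\e P_{\e}(I,\theta),\ \theta+\omega(I)+\e Q_{\e}(I,\theta))$. The tangency of $Y$ to $\Sigma$ translates to $P_{\e}(0,\theta)=0$, hence $P_{\e}=O(I)$. Since moreover $Y(t,o)=0$ and an unperturbed orbit of action $I$ spends a proportion tending to $1$ of its period $T(I)\sim|\log I|/\la$ in small neighborhoods of $o$, averaging $Y$ along such an orbit yields extra logarithmic gains. A careful averaging or Birkhoff-type reduction exploiting the Hamiltonian structure should then produce effective bounds on $P_{\e},Q_{\e}$ that are small compared to the twist $\sim 1/(\log I)^{2}$.

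\medskip
\noindent\emph{Step 3 (scaled KAM on dyadic annuli).} Set $I_{n}=2^{-n}$, $A_{n}=\{I_{n}\le I\le 2I_{n}\}$, and rescale $I=I_{n}(1+J)$, $J\in[0,1]$, to convert $A_{n}$ to unit size. After rescaling, $f_{\e}$ is a smooth perturbation of the integrable twist $(J,\theta)\mapsto(J,\theta+\omega(I_{n}(1+J)))$, of twist strength of order $1/n^{2}$; by Step 2 the rescaled perturbation is small with respect to this twist uniformly for $n$ large. Applying a finite-differentiability version of Herman's KAM theorem (analogous to the one in \cite{FayadKrikorian}) on each $A_{n}$, for $\e\in\,]-\e_{r},\e_{r}[$ and $n$ large enough, yields a Cantor set of $C^{r}$ $f_{\e}$-invariant KAM circles of relative measure tending to $1$ in $A_{n}$.

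\medskip
\noindent\emph{Step 4 (conclusion and main obstacle).} The union over $n$ of these Cantor sets of invariant circles accumulates $\Sigma$ and has positive $2$-Lebesgue measure in any neighborhood of $\Sigma$, which is the content of the theorem. The decisive technical difficulty is Step 2: the action-angle chart is singular at $\Sigma$, and the raw tangency estimate $P_{\e}=O(I)$ is borderline insufficient to run KAM on annuli arbitrarily close to $\Sigma$. The required improvement must come from combining the tangency of $Y$ with the vanishing of $Y$ at $o$ and the logarithmic time spent near $o$ by nearby orbits, through an averaging or Birkhoff-normal-form reduction that makes the rescaled perturbation decisively smaller than the rescaled twist as $n\to\infty$.
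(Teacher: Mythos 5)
Your overall architecture (localize on shrinking annuli near $\Sigma$, rescale, apply a finite-smoothness KAM theorem, take the union over the annuli) is the right one in spirit, but the proposal has a genuine gap exactly where you flag it, and the gap is not a removable technicality: as set up, Steps 2--3 do not close. There are two obstructions. First, in your rescaled action-angle coordinates the twist on $A_{n}$ is of order $1/n^{2}$, so the smallness threshold $\e_{0}(C,\mu)$ of the KAM theorem degenerates with $n$; to obtain a single $\e_{r}$ valid on all annuli you must prove that the rescaled perturbation is $o(1/n^{2})$ in the relevant $C^{k_{0}}$ norm, and neither the tangency estimate $P_{\e}=O(I)$ nor the heuristic averaging of Step 2 delivers this. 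Second, and worse, the action-angle chart is not tame near $\Sigma$: the angle variable is defined by flowing for times up to the period $T(I)\sim|\log I|/\la$, and hyperbolicity at $o$ stretches $k$-th derivatives of the flow by factors of order $e^{k\la T(I)}\sim I^{-k}$, so the $C^{k_{0}}$ norm of $f_{\e}-f_{0}$ expressed in $(I,\theta)$ blows up polynomially in $2^{n}$ on $A_{n}$ --- far faster than the $1/n^{2}$ twist decays. The uniform smallness claimed in Step 3 is therefore not merely unproven but, absent a substantially new idea, false.

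The paper avoids both problems by not using action-angle coordinates at all. It first puts $f_{\e}$ into a symplectic Sternberg-type normal form near $o$ (so that there $f_{\e}$ is the time-$1$ map of $J\nabla q_{\e}(xy)$), then takes the first return map to a fundamental domain adjacent to $\Sigma$, glues the two transversal boundaries of that domain by $f_{\e}$ to obtain an abstract annulus, and uniformizes it. The decisive gain is that the first return map inverts the frequency: in the renormalized coordinate $y$ (essentially the product $xy$), the twist function is $l_{\e}(y)=\s_{0,N}(y)/q_{\e}'(y)-\ln y/q_{\e}'(y)$ modulo $\Z$, and after the multiplicative rescaling $y\mapsto e^{n}y$ its derivative is bounded below by $1/(2\la)$ \emph{uniformly in $n$}, while the renormalized map tends to this integrable twist in $C^{k}$ as $\e\to 0$, again uniformly in $n$. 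R\"ussmann's translated curve theorem then applies with constants independent of $n$ (the translated curves are shown to be invariant via an invariant probability measure on the glued annulus), and a separate gluing argument reassembles the invariant graphs of the renormalized map into closed $f_{\e}$-invariant circles. If you want to salvage your route, the missing ingredient is precisely this renormalization step, which replaces the degenerate twist $\omega'(I)$ by the uniform twist of $1/\omega$.
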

Let us  clarify some points in the preceding statement.

By a {\it $C^r$ circle}, $r\geq 0$, we mean a $C^r$ non-self-intersecting closed curve (or equivalently, if $r\geq 1$, a nonempty compact connected 1-dimensional $C^r$ submanifold of $\R^2$) which is  isotopic in $\R^2\setminus\{o\}$ to the separatrix $\Sigma$. Such a set $\Gamma$ is invariant by $f_{\e}$ if $f_{\e}(\Gamma)=\Gamma$.

We say that a set $\cG$ of $f_{\e}$-invariant circles {\it accumulates} the set $\Sigma$ if for any $\xi>0$, the set of $\Gamma\in\cG$ such that ${\rm dist}(\Gamma,\Sigma)<\xi$ is not empty, where ${\rm dist}$ denotes the Hausdorff distance
$${\rm dist}(A,B)=\max\biggl (\sup_{a\in A}d(a,B),\sup_{b\in B}d(b,A)\biggr)
$$
(here $d(x,C)=\inf_{c\in C}\|x-c\|_{\R^2}$).

The $f_{\e}$-invariant circles obtained in Theorem \ref{theo:A} are  {\it  KAM circles}: the restrictions of $f_{\e}$ on each of these curves are $C^r$ circle diffeomorphisms that  are conjugated to  Diophantine\footnote{A real number $\a$ is Diophantine if there exist positive constants $\kappa,\tau$ such that for any $(p,q)\in\Z\times \N^*$, $|\a-(p/q)|\geq \kappa/q^{\tau}$. The constants  $\tau$ and $\kappa$ are respectively   called the {\it exponent}  and the {\it constant} of the Diophantine condition.  The set of Diophantine numbers with fixed exponent $\tau>2$ has full Lebesgue measure if the constant is not specified and positive measure if the constant is also fixed (and small).}  translations.  In our case, the exponent of the Diophantine condition can be choosen to be independent of $\e$ (it depends only on $\l$).  
\begin{rem}\label{rem:1.2}
 On the other hand, and this is a difference with the situation of Herman's Last Geometric Theorem, the {\it constants} of these Diophantine numbers  are  arbitrarily small. Moreover, as these circles accumulate the separatrix, their $C^2$-norm must explode.  

\end{rem}
\begin{rem}The phase space $\R^2$ can be replaced by the cylinder $\R/\Z\times \R$ in the statement of the Main Theorem.
\end{rem}
The smallness condition in Theorem \ref{theo:A}  is indeed necessary as shows the following theorem.

Let $\D_{\Sigma}$ be the bounded connected component of $\R^2\setminus\Sigma$.
\begin{Main}\label{theo:B}
There  exists a smooth  symplectic diffeomorphism $f:\R^2\to\R^2$  admitting 
a separatrix $\Sigma$ which is included  in  an open set $W$ of $\Sigma\cup \Delta_{\Sigma}$ that contains no $f$-invariant circle in $W\setminus\Sigma$.
\end{Main}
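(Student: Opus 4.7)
The plan is to produce a smooth symplectic diffeomorphism $f$ whose separatrix $\Sigma$ is \emph{Lyapunov unstable from inside} $\Delta_\Sigma$: for some fixed $c>0$ and every neighborhood $U$ of $\Sigma$ in $\Delta_\Sigma$, there exists a point $z\in U$ whose forward orbit reaches distance $>c$ from $\Sigma$. Once this is achieved, Theorem B follows almost formally: any invariant circle $\Gamma$ in $W\setminus\Sigma$ sufficiently close to $\Sigma$ would, together with $\Sigma$, bound an $f$-invariant topological annulus $A\subset\Delta_\Sigma$ contained in the $c/2$-neighborhood of $\Sigma$; then every orbit starting in $A$ would remain in $A$, contradicting Lyapunov instability. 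So the first step is to record this reduction carefully, paying attention to the fact that $o\in\Sigma$ is a pinch point and that $f$ is orientation-preserving (hence really does preserve each complementary component of $\Sigma\cup\Gamma$).

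For the construction itself, I would start from an integrable autonomous Hamiltonian $H_0$ of pendulum type, with hyperbolic fixed point $o$, separatrix $\Sigma$, elliptic center $c_*\in\Delta_\Sigma$, and action-angle coordinates $(I,\theta)\in(0,I_*)\times\mathbb{T}$ on $\Delta_\Sigma\setminus\{c_*\}$, in which the time-$1$ map reads $(I,\theta)\mapsto(I,\theta+\omega(I))$ with $\omega(I)\to 0$ as $I\to I_*$ (typically $\omega(I)\sim -A/\log(I_*-I)$). I would then add a symplectic perturbation $\Phi$ of the time-$1$ map, obtained from a time-$1$-periodic Hamiltonian $F(t,x,y)$ that is $C^\infty$-flat on $\Sigma\cup\{o\}$ (so that the stable and unstable manifolds coincide on $\Sigma$ and $\Sigma$ remains a non-split separatrix), but whose Fourier content in $\theta$, localized on a sequence of resonant circles $\{I=I_n\}$ with $\omega(I_n)=p_n/q_n\in\mathbb{Q}$, is tuned so as to generate, at each scale, resonance islands whose widths $\delta_n$ satisfy the Chirikov--Greene overlap condition $\delta_n\gtrsim |I_n-I_{n+1}|$ between neighboring resonances. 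Since the frequency map is logarithmically degenerate at $\Sigma$, the resonances accumulate very densely on $\{I=I_*\}$, which leaves room for overlap even with Fourier amplitudes that decay to zero fast enough to give a smooth map. Inside each overlapping-resonance annulus, a standard monotone-twist / Aubry--Mather argument, or a direct elementary geometric argument comparing inner and outer "tongues" of the resonance separatrices, will furnish an orbit that drifts across the annulus; concatenating these drifts along a subsequence of scales accumulating on $\Sigma$ produces the required Lyapunov-unstable orbits.

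The principal obstacle is the tension between (a) $C^\infty$-flatness of $\Phi$ at $\Sigma$ in the original coordinates $(x,y)$ of $\mathbb{R}^2$, which is what lets us keep $\Sigma$ as a non-split separatrix and $o$ as a smooth hyperbolic fixed point, and (b) the requirement that $\Phi$ contain, in $(I,\theta)$-coordinates, Fourier modes of order $q_n\to\infty$ whose amplitudes remain large enough on arbitrarily thin collars of $\Sigma$ to exceed the Chirikov threshold. Because the action-angle chart degenerates at $\Sigma$, one cannot just transplant a standard-map-type perturbation: the key quantitative estimate is to compare, at each scale $I_*-I_n$, the suppression of the $q_n$-th Fourier coefficient forced by smoothness and flatness in $(x,y)$ with the overlap threshold $|\omega'(I_n)|^3/q_n^4$. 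Because this threshold itself goes to zero (indeed, blows up in a controlled way) as $I_n\to I_*$, I expect that by choosing $q_n$ to grow slowly enough and localizing each resonant block in a thin shell around $\{I=I_n\}$ via a cutoff supported where $I_*-I$ is of the appropriate size, the two requirements can be reconciled; verifying this quantitative match, and assembling the countable family of localized perturbations into a single smooth symplectic diffeomorphism of $\mathbb{R}^2$, is where the bulk of the technical work will lie.
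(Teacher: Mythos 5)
Your topological reduction is fine and is essentially the same trapping argument the paper uses (run in the opposite direction: the paper exhibits an orbit starting at a definite distance from $\Sigma$ that accumulates $\Sigma$, whereas you want orbits starting near $\Sigma$ that escape; either one contradicts the invariance of the annulus bounded by $\Sigma$ and a putative invariant circle). The construction, however, is a program with two unresolved gaps, and the second one is likely fatal in the form you propose. First, \lq\lq Chirikov--Greene overlap\rq\rq\ is a heuristic, not a theorem: to extract a drifting orbit you would need either a rigorous converse-KAM argument showing the absence of invariant circles in each overlap annulus \emph{followed by} Mather's connecting-orbit theorem (which requires knowing the annulus is a Birkhoff region of instability), or a direct geometric construction; you invoke these as \lq\lq standard\rq\rq\ but neither is carried out, and neither is routine here because the twist $\omega'(I)$ blows up at $\Sigma$.

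Second, and more seriously, the requirement that the perturbation be $C^\infty$-flat on $\Sigma$ is in quantitative conflict with the overlap condition, and you have not resolved it. Flatness forces the perturbation, on the shell at energy $c$ from $\Sigma$, to be $o(c^k)$ for every $k$; the overlap threshold at that shell decays only polynomially in $c$ (up to logarithms and powers of $q_n$), so you are forced to take $q_n\to\infty$ superpolynomially in $1/c_n$; but then the $q_n$-th angular Fourier coefficient of a perturbation that is smooth \emph{in the $(x,y)$ variables} is itself suppressed superpolynomially, since the action--angle chart compresses the portion of the level curve away from $o$ into a $\theta$-interval of length only $\sim 1/|\log c_n|$. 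You acknowledge this tension but merely \lq\lq expect\rq\rq\ it can be reconciled; that is the entire difficulty, and nothing in the proposal addresses it. The paper's construction sidesteps the issue completely: the perturbation is \emph{not} flat on $\Sigma$ and is not small there --- it is a single explicit symplectic map $g_{M}$, supported away from the hyperbolic point, which preserves $\Sigma$ \emph{setwise} (it reparametrizes $\Sigma$ via a diffeomorphism $s_{M}$ of the separatrix) while contracting the transverse coordinate $xy$ by a factor $e^{-bM}$ on a fixed angular window. Combined with the logarithmic twist $l(y)=\sigma(y)-\ln y$ of the renormalized return map, which wraps any small graph around the whole circle at every scale, the same fixed perturbation then acts at all scales simultaneously and yields the required orbit by a short induction, with no small divisors, no Fourier analysis, and no variational machinery. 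If you want to salvage your approach, the first thing to give up is the flatness of the perturbation on $\Sigma$; what you actually need is only that $\Sigma$ remain invariant and that the perturbation be supported away from $o$.
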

The situation described in the above Theorems \ref{theo:A}, \ref{theo:B} is  {\it not generic}. Indeed, as  Poincaré discovered, in general, the stable and unstable manifolds of a hyperbolic fixed or periodic  point of a symplectic map intersect transversally (one usually refers to this phenomenon as the {\it spiltting of separatrices}), a fact that forces the dynamics of $f$ to be ``quite intricate''. This was Poincaré's  key argument in his proof of the fact  that the Three-body problem in Celestial Mechanics 
 does not admit  a complete set of independent  commuting first integrals.   Later, Smale \cite{Smale} showed that this splitting of separatrices  has an even more  striking consequence on the dynamics of $f$, namely the existence of a {\it horseshoe}, that is, a uniformly hyperbolic $f$-invariant compact  set  (locally maximal) with positive topological entropy \footnote{By a result of the first author \cite{Katok}, in this situation,  positive topological entropy is indeed equivalent to the existence of a horseshoe. } and on which the dynamics of $f$ is ``chaotic'' (isomorphic to a 2-sided shift). 
  A consequence of the splitting of a separatrix is thus the existence of  a  {\it Birkhoff instability zone} (open region  without invariant circles) in the vicinity of this  split separatrix (see \cite{Herman-Ast1} for a detailed exposition on the topic).  In some sense, Theorem \ref{theo:A} shows  that in the perturbative situation (\ref{eq:1.1})-(\ref{deffepsilon}) ($\e$ small enough) the splitting of separatrices is essentially the only mechanism responsible  for the creation of instability zones. On the other hand, in a  ``non perturbative'' situation, Theorem \ref{theo:B} points in the opposite direction.  Figures \ref{fig:4}, \ref{fig:5} illustrate the role that plays  the smallness assumption in Theorem \ref{theo:A} (or its absence in Theorem \ref{theo:B}).

\bigskip\noindent{\it On the proofs of Theorems \ref{theo:A}, \ref{theo:B}.-- } As  suggests Remark \ref{rem:1.2}, the invariant circles of  Theorem \ref{theo:A} cannot be obtained directly {\it via} a classical KAM approach. On the other hand, the existence of the (non split) separatrix $\Sigma$ allows to associate to each diffeomorphism $f_{\e}$ a regular diffeomorphism $\mathring{f}_{\e}$, defined on a standard open annulus and preserving a finite probability measure, to which one can apply Moser's or R\"ussmann's  Invariant (or translated) curve Theorem \cite{Moser62}, \cite{Russmann} (see Section \ref{sec:6}). The thus obtained invariant curves for $\mathring{f}_{\e}$ yield invariant curves for $f_{\e}$. The construction of the diffeomorphism $\mathring{f}_{\e}$ is done as follows. We first make preliminary reductions involving some  Birkhoff  and symplectic Sternberg-like Normal Forms  (Section \ref{sec:2}) to have a control on the dynamics in some neighborhood of the hyperbolic fixed point $o$ (Section \ref{sec:3}). This allows us to define in Section \ref{sec:4} a first return map $\hat f_{\e}$ for $f_{\e}$, in a fundamental domain $\cF_{\e}$ the boundaries of which can be glued together to obtain an open {\it abstract} cylinder (or annulus).
This abstract cylinder can be {\it uniformized} to become a standard annulus and the first return map $\hat f_{\e}$  then becomes a regular diffeomorphism $\bar f_{\e}$ of a standard annulus (preserving some probability measure). This is done in Section \ref{sec:5}. We call {\it normalization} (see Section \ref{sec:5.3}) the uniformization operation and  we say that $\bar f_{\e}$ is the {\it renormalization}\footnote{The term {\it renormalization} in this paper has the same acceptation as in the theories of circle diffeomorphisms, holomorphic germs or quasi-periodic cocycles;  {\it cf.} \cite{Y}, \cite{Y-germs}, \cite{K}, \cite{AK1}.  } of $f_{\e}$. The dynamics of $\bar f_{\e}$ is closely related to that of $f_{\e}$ in the sense that the existence of invariant curves for $\bar f_{\e}$ translates into a similar statement for $f_{\e}$ (see Section \ref{sec:7}). The renormalized diffeomorphism $\bar f_{\e}$ has a {\it large twist} (this is a reminiscence of the hyperbolicity of  $f_{\e}$ at $o$) and we are thus led to {\it rescale} it to obtain the aforementioned diffeomorphism $\mathring{f}_{\e}$ which is now a small $C^r$-perturbation of an integrable twist map (this is where  the smallness assumption of Theorem \ref{theo:A} appears) with  {\it controled} twist (see Section \ref{sec:6}).  The proof of Theorem \ref{theo:A} is completed in Section \ref{sec:8}.

\medskip To prove Theorem \ref{theo:B} ({\it cf.} Section \ref{sec:9}) we construct a symplectic diffeomorphism $f$ (named $f_{pert}$ in that section) so that the associated  renormalized diffeomorphism $\bar{f}$ has an  orbit accumulating the boundary of the aforementioned annulus: this prevents the existence of $\bar{f}$-invariant  curves close to this boundary  and therefore of $f$-invariant curves close to the separatrix $\Sigma$.

\medskip   The technique\footnote{We note that the authors of \cite{TZ} introduce the ``separatrix map'' constructed by a glueing construction to investigate the size of the instability zones. Our approach here, which is  focused on a renormalization point of view, is different.} we use to prove   Theorem \ref{theo:A} might be useful to study  the dynamics  of symplectic twist maps with zero topological entropy (to which extent are they  integrable?\footnote{Angenent, \cite{Sib}, proves they are $C^0$-integrable in the sense that for any rotation number one can find a  $C^0$-invariant curve with this rotation number. Can one prove $C^k$-integrability? The word ``integrable'' is meant in a broad sense}) and the construction of  Theorem \ref{theo:B} might  give a hint to provide examples of {\it smooth} twist maps admitting {\it isolated} invariant circles with {\it irrational}\footnote{A modification of the example of Theorem \ref{theo:B} yields examples of such isolated invariant curves with {\it rational} rotation numbers. For the existence of curves with {\it irrational} rotation number  in {\it low regularity} and related results see \cite{Arnaud09}, \cite{Arnaud11}, \cite{Arnaud13}, \cite{AF}. } rotation number (if they exist, these curves bound two instability zones).

\subsection*{Acknowledgments} The second author (RK) wishes to thank  Bassam Fayad  and the referee for  their thorough reading of a preliminary version of this paper and their very useful comments.

\section{Normal Forms}\label{sec:2}
The main result of this Section is the following  Sternberg-like symplectic Normal Form theorem  (Proposition \ref{prop:2.5}) that will allow us in Section \ref{sec:3} to  control the {\it long-time} dynamics of $f_{\e}$ in a neighborhood of the hyperbolic point $o$. This will be useful when we shall define first return maps for $f_{\e}$ in convenient fundamental domain, see Section \ref{sec:4}.

\bigskip Let $f_{\e}$ defined by (\ref{deffepsilon}), (\ref{eq:1.1}).
\begin{prop}\label{prop:2.5}For any $k\in\N^*$ large enough, there exists $\e_{k}>0$ for which the following holds. There exist  a smooth family $(q_{\e,k})_{\e\in I}$ ($I\ni 0$ some open interval of $\R$) of polynomials $q_{\e,k}(s)=\l s+O(s^2)\in \R[s]$ and  a continuous family $(\Theta_{\e,k})_{\e\in I}$ of symplectic $C^k$-diffeomorphism of $\R^2$  such that $\Theta_{\e,k}(o)=o$, $D\Theta_{\e,k}(o)=id$ and  on a neighborhood $V_{k}$ of $o$ one has,  provided $\e\in ]-\e_{k},\e_{k}[$:
\begin{align}\textrm{on }\ V_{k},\qquad f_{\e,k}&\mathop{=}_{defin.}\Theta_{\e,k}\circ f_{\e}\circ \Theta_{\e,k}^{-1}\label{eq:2.3ante-1}\\
&=\phi^1_{J\nabla Q_{\e,k}},\quad \textrm{where }\quad Q_{\e,k}(x,y)=q_{\e,k}(xy) \label{eq:2.3ante}\end{align}
and
\be\textrm{on }\ V_{k},\qquad (\Theta_{0,k})_{*}X_{0}=J\nabla Q_{0,k}.\label{eq:2.3ante-2}\ee
\end{prop}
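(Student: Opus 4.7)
The plan is to proceed in three stages: a symplectic preparation that diagonalizes $Df_\varepsilon(o)$ and straightens the local invariant manifolds of $o$; a formal symplectic Birkhoff normal form that produces the polynomial $q_{\varepsilon,k}$; and a symplectic Sternberg-type realization step that upgrades the formal normal form into an honest equality on a neighborhood of $o$. The last stage is where the real difficulty lies.

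First, for small $\varepsilon$, $o$ remains a hyperbolic fixed point of $f_\varepsilon$ (since $Y(t,o)=0$ for all $t$), and the linearization $L_\varepsilon=Df_\varepsilon(o)\in \mathrm{SL}(2,\mathbb{R})$ depends smoothly on $\varepsilon$. A continuous family of linear symplectic coordinate changes puts $L_\varepsilon$ into the diagonal form $\mathrm{diag}(\mu_\varepsilon,\mu_\varepsilon^{-1})$ with $\mu_0=e^{\lambda}$. The symplectic stable and unstable manifold theorem then provides smooth local invariant manifolds depending continuously on $\varepsilon$, which a further symplectic change straightens to the coordinate axes. In such coordinates one inductively constructs, for each $n\ge 2$, a homogeneous polynomial $P_{\varepsilon,n}$ of degree $n+1$ such that conjugation by $\phi^{1}_{J\nabla P_{\varepsilon,n}}$ removes the non-resonant part of the degree-$n$ Taylor jet. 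The cohomological equation is $P\circ L_\varepsilon -P=R_n$; since $L_\varepsilon^{*}(x^ay^b)=\mu_\varepsilon^{a-b}x^ay^b$ and $\mu_\varepsilon\neq 1$, its kernel on homogeneous polynomials is exactly the polynomials in $s=xy$, and the equation is uniquely solvable on the complement with bounds uniform in $\varepsilon$. The accumulated resonant part is a formal power series $q_\varepsilon(s)=\lambda s+O(s^2)$ depending smoothly on $\varepsilon$. In the autonomous case $\varepsilon=0$, the same scheme can be performed directly on the Hamiltonian $H_0$ (using Poisson brackets in place of commutators of time-$1$ maps), which yields the stronger Hamiltonian-level conjugacy $(\Theta_{0,k})_{*}X_0 = J\nabla Q_{0,k}$.

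Given $k$, I would then truncate $q_\varepsilon$ at degree $N=N(k)$ in $s$ to obtain $q_{\varepsilon,k}$, set $Q_{\varepsilon,k}(x,y)=q_{\varepsilon,k}(xy)$, and denote by $g_\varepsilon$ the image of $f_\varepsilon$ under the finite composition of polynomial symplectic changes produced above. By construction $g_\varepsilon$ and $\phi^{1}_{J\nabla Q_{\varepsilon,k}}$ agree to order $N'=N'(N)$ at $o$. The hardest step is to upgrade this jet agreement into an exact equality on a neighborhood of $o$ through a $C^k$ symplectic correction $\Psi$ tangent to the identity to order $N'$ at $o$. I would implement it via a Moser-type deformation in the symplectic category: interpolate $g_\varepsilon$ and $\phi^{1}_{J\nabla Q_{\varepsilon,k}}$ by a smooth family $(g_t)_{t\in [0,1]}$ of symplectic maps sharing the same $N'$-jet at $o$, and seek a symplectic isotopy $\Psi_t$ with $\Psi_0=\mathrm{id}$ satisfying $\Psi_t\circ g_0=g_t\circ \Psi_t$. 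Differentiating in $t$ reduces the construction of the Hamiltonian $K_t$ generating $\dot\Psi_t$ to a linear cohomological equation over the hyperbolic map $\phi^{1}_{J\nabla Q_{\varepsilon,k}}$; uniform hyperbolicity of its linear part together with the $N'$-th order vanishing of the source at $o$ makes the equation solvable in $C^k$ on a neighborhood of $o$ with a loss of derivatives controlled by $N'-k$. Choosing $N(k)$ large enough then produces $\Psi_1 \in C^k$ depending continuously on $\varepsilon$; composing with the previous changes of coordinates yields $\Theta_{\varepsilon,k}$. The threshold $\varepsilon_k$ is dictated by the range of validity of the preparatory reductions (existence of the hyperbolic fixed point, smoothness of its invariant manifolds) and by the convergence of the iterative scheme in this final Sternberg step.
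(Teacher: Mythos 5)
Your three-stage architecture (preparation, formal symplectic Birkhoff normal form, Sternberg-type rigidification) is the same as the paper's, with two presentational differences. First, the paper performs the Birkhoff step on the \emph{time-dependent vector field} $X_\e^t$ rather than on the diffeomorphism $f_\e$: the homological equations are ODEs in $t$ with periodic boundary conditions (Lemma \ref{lemma:Birkhoff}), which has the advantage that at $\e=0$ the normalizing transformation $b_0^t$ is automatically autonomous and the normal form is obtained at the level of $X_0$ itself. Your map-level scheme (solving $P\circ L_\e-P=R_n$) is equally legitimate for producing the jet normalization of $f_\e$. Second, for the hard rigidification step the paper does not re-prove a Sternberg theorem by a Moser path method: it invokes the symplectic Sternberg theorems of Banyaga--de la Llave--Wayne \cite{BLW} (Theorems 1.1 and 1.2 there), once for the autonomous vector field $X_0^{(1)}$ and once, in a parametric form (Proposition \ref{prop:2.7}), for the family of diffeomorphisms. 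Your deformation sketch describes the right mechanism (the cohomological equation over a hyperbolic germ, solvable with a loss of derivatives compensated by the order of flatness $N'$ at $o$), but that mechanism \emph{is} the content of \cite{BLW}; as written, your stage three assumes the key analytic black box while gesturing at its proof, so you should either cite it or supply the tame estimates.

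The one genuine gap concerns the case $\e=0$. The Proposition demands simultaneously that the family $(\Theta_{\e,k})_\e$ be continuous in $\e$ \emph{and} that $(\Theta_{0,k})_*X_0=J\nabla Q_{0,k}$ hold at the level of the vector field, not merely of its time-$1$ map. Your stage-three conjugacy at $\e=0$ only intertwines the maps $g_0$ and $\phi^1_{J\nabla Q_{0,k}}$, which does not imply that it pushes $X_0$ forward to $J\nabla Q_{0,k}$; and your proposed remedy --- running a separate Hamiltonian-level scheme at $\e=0$ --- produces a $\Theta_{0,k}$ that has no reason to be the limit of the $\Theta_{\e,k}$ built by the map-level scheme for $\e\neq0$, so continuity of the family at $\e=0$ is lost. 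The paper resolves this tension by ordering the steps: it first applies the \emph{autonomous vector-field} Sternberg theorem at $\e=0$ to arrange $f_0^{(1)}=\phi^1_{J\nabla Q_{0,N}}$ \emph{exactly} and $(R_0)_*X_0=J\nabla Q_{0,N}$ (Corollary \ref{cor:2.3}), and only then applies the parametric diffeomorphism version with the hypothesis $g_{1,0}=g_{2,0}$, which permits the normalization $S^{(1)}_0=\mathrm{id}$; continuity at $\e=0$ and the vector-field identity \eqref{eq:2.3ante-2} then follow together. You need some version of this compatibility argument for your proof to yield the Proposition as stated.
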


Note that $o$ is still a hyperbolic fixed point of $f_{\e,k}$ and that $$\Sigma_{\e,k}:=\Theta_{\e,k}(\Sigma)$$ is still a separatrix for $f_{\e,k}$.

\bigskip\noindent{\it Reduction of Theorem \ref{theo:A} to Theorem \ref{theo:2.1}.--}
After applying Proposition \ref{prop:2.5}  we are thus left with a family $(f_{\e,k})$ of $C^k$- symplectic diffeomorphisms, each $f_{\e,k}$ being conjugated to $f_{\e}$ and admitting a separatrix $\Sigma_{\e,k}$. Since the conclusions of Theorem \ref{theo:A} are clearly invariant by conjugation, to prove Theorem \ref{theo:A} we just need to prove that if $k\geq r$ and $\e$ is small enough, each separatrix $\Sigma_{\e,k}$ is accumulated by a set of positive measure of KAM-circles for $f_{\e,k}$. This is the content of Theorem \ref{theo:2.1} below, that we shall apply to the family  of $C^k$-diffeomorphisms $f_{\e,k}$ defined by (\ref{eq:2.3ante-1}), (\ref{deffepsilon}), (\ref{eq:1.1}), but that  holds for any  family  (that we still denote in what follows  $(f_{\e})_{\e\in I}$ to alleviate the notations) of symplectic $C^k$-diffeomorphisms satistying the following hypothesis.

\medskip 
Let $(f_{\e})_{\e\in I}$,   ($I\ni 0$ open interval of $\R$) be a family of $C^k$-symplectic diffeomorphisms of $\R^2$ that satisfies:
\begin{enumerate}[label=($H$\arabic*)]
\item\label{iii0} Each $f_{\e}$ has a (non split) separatrix $\Sigma_{\e}$ associated to the hyperbolic point $o$.
\item\label{iii1} The map $I\ni\e\to f_{\e}-id\in C^k(\R^2,\R^2)$ is continuous (the norm on $C^k$ is the usual $C^k$-norm);
\item\label{iii2} On some neighborhood $V$ of $o$, each $f_{\e}$ coincides with the time-1 map of a symplectic vector field $J\nabla Q_{\e}(x,y)$ where $Q_{\e}(x,y)=q_{\e}(xy)$, $q_{\e}\in C^{k+1}(\R^2)$
$$q_{\e}(t)=\l t+O^2(t),\qquad \l>0.$$
\item\label{iii3} On $\R^2$,  $f_{0}=\phi^1_{X_{0}}$ where $X_{0}=J\nabla H_{0}$ is a hamiltonian vector field that coincides with $J\nabla Q_{0}$ on $V$. 
\end{enumerate}

\begin{rem}\label{rem:2.1}On $V_{}$ the orbits of $f_{\e}{}_{|\ V}= \phi^1_{J\nabla Q_{\e}}$  are pieces of hyperbolae $\{xy={\rm cst}\}$ (condition \ref{iii2}).

When $\e=0$, for any $z\in\{xy=c\}\cap V$, $N\in\Z$ such that $f_{0}^N(z)\in V$ one has $f_{0}^N(z)\in \{xy=c\}\cap V$ (condition \ref{iii3}).  
\end{rem}

\begin{rem}\label{rem:2.2}The intersection $\Sigma_{\e}\cap V_{}$ is the union 
$$\Sigma_{\e}\cap V_{}=(W^s_{f_{\e}}(o)\cap  V_{})\cup (W^{u}_{f_{\e}}(o)\cap V_{})$$ and 
$$W^{s}_{f_{\e}}(o)\cap V_{}=(\R\times\{0\}) \cap V_{},\qquad W^{u}_{f_{\e}}(o)\cap  V_{}=(\{0\}\times \R) \cap V_{}.$$
\end{rem}

One then has:

\begin{theo}\label{theo:2.1}There exists $k_{0}\in\N$ for which the following holds. Let  $k\geq k_{0}+2$ and let  $(f_{\e})_{\e\in I}$ be  a family of $C^k$-symplectic diffeomorphisms of $\R^2$ satisfying the previous conditions \ref{iii0}, \ref{iii1},  \ref{iii2}, \ref{iii3}.  Then, there exists $\e_{1}>0$ such that for any $\e\in ]-\e_{1},\e_{1}[$ the diffeomorphism $f_{\e}$ 
 admits  a set of positive Lebesgue measure of invariant $C^{k-k_{0}-2}$-circles in any neighborhood of the separatrix $\Sigma_{\e}$. 
 
 Moreover, if $k-k_{0}-2\geq k_{1}$ ($k_{1}$ depending only on $\l$) these circles are KAM-circles.
\end{theo}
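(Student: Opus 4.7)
\medskip
\noindent\textbf{Proof plan.}

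My plan is to prove Theorem~\ref{theo:2.1} along the renormalization roadmap announced in the introduction, reducing the problem to the classical invariant curve theorem of Moser/R\"ussmann on a standard annulus. Thanks to hypothesis~\ref{iii2}, on the neighborhood $V$ of $o$ the map $f_{\e}$ is the time-$1$ of the integrable Hamiltonian $Q_{\e}(x,y)=q_{\e}(xy)$, so each hyperbola $\{xy=c\}\cap V$ is invariant, the local dynamics is a hyperbolic linear flow after a time reparametrization, and if a point enters $V$ near the stable manifold at level $c$ it exits $V$ near the unstable manifold after a number of iterates $N_{\e}(c)\asymp \lambda^{-1}|\log|c||$. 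Combined with the fact that $\Sigma_{\e}$ reconnects the two local branches outside $V$, this lets me pick thin rectangular transversals $\mathcal{E}^{\mathrm{in}}_{\e},\mathcal{E}^{\mathrm{out}}_{\e}\subset V$ to the local stable and unstable manifolds respectively, and build a first-return map $\hat f_{\e}$ on a fundamental domain $\cF_{\e}\simeq \mathcal{E}^{\mathrm{in}}_{\e}$: an orbit traverses $V$ via the integrable local flow for $N_{\e}(c)$ iterates, exits through $\mathcal{E}^{\mathrm{out}}_{\e}$, and returns to $\cF_{\e}$ after a bounded number of $f_{\e}$-iterates along a tubular neighborhood of $\Sigma_{\e}$.

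Next I would glue the two horizontal boundary components of $\cF_{\e}$ (the transversal preimage and image of $\Sigma_{\e}$) to form an abstract open cylinder and uniformize it to a standard annulus $\bT\times(0,1]$, producing the renormalization $\bar f_{\e}$ of Section~\ref{sec:5}, a symplectic $C^{k-k_{0}}$-diffeomorphism preserving a finite probability measure. Because $N_{\e}(c)\to\infty$ as $c\to 0$, the twist of $\bar f_{\e}$ blows up near one boundary; the rescaling of Section~\ref{sec:6}, essentially a change of vertical coordinate of order $|\log|c||$, yields the map $\mathring f_{\e}$ whose twist is now bounded above and below. Hypotheses~\ref{iii3} and~\ref{iii1} together force $\mathring f_{0}$ to be a genuinely integrable twist map and make $(\mathring f_{\e})$ a $C^{k-k_{0}-2}$-small perturbation of $\mathring f_{0}$ for small $|\e|$.

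Having arrived at a small perturbation of an integrable twist map with controlled twist constant, I would invoke Moser's or R\"ussmann's invariant curve theorem: for every Diophantine number $\alpha$ in a positive-measure set of rotation numbers (with exponent depending only on $\lambda$), one obtains an $\mathring f_{\e}$-invariant $C^{k-k_{0}-2}$-circle on which the dynamics is conjugated to rotation by $\alpha$, and the union of these circles has positive Lebesgue measure in every neighborhood of the boundary corresponding to $\Sigma_{\e}$. Pulling back through the rescaling and the normalization gives $\hat f_{\e}$-invariant curves in $\cF_{\e}$, whose saturation under the iterates of $f_{\e}$ used in the return construction is an $f_{\e}$-invariant circle isotopic to $\Sigma_{\e}$. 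Since normalization and rescaling are absolutely continuous with bounded distortion on compact sub-annuli, positive Lebesgue measure is preserved, and these circles accumulate $\Sigma_{\e}$ as required; the stronger KAM-circle conclusion uses the same argument once $k-k_{0}-2\geq k_{1}$, where $k_{1}$ is the regularity threshold of the smooth version of Moser/R\"ussmann.

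The main technical obstacle I anticipate lies in the rescaling step and in the bookkeeping of regularity losses: one must verify that after the geometric surgery (uniformization of an $\e$-dependent fundamental domain, logarithmic change of coordinates absorbing the divergent twist), the map $\mathring f_{\e}$ satisfies a twist condition with constants \emph{uniform in $\e$}, and that its distance to $\mathring f_{0}$ in the relevant $C^{k-k_{0}-2}$-norm tends to zero with $\e$. This is where the $k_{0}+2$ derivative loss is absorbed, and where the precise polynomial Birkhoff form $Q_{\e}(x,y)=q_{\e}(xy)$ provided by Proposition~\ref{prop:2.5} is crucial: without it the local dynamics would not be conjugated to a truly integrable flow, and the uniformization step would destroy the twist structure needed to apply Moser/R\"ussmann.
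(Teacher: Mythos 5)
Your plan follows the paper's own proof essentially step for step: local integrable normal form on $V$ giving transit times $\asymp\lambda^{-1}|\ln c|$, a first-return map on a fundamental domain, gluing of its vertical boundaries and uniformization to a measure-preserving diffeomorphism $\bar f_{\e}$ of a standard annulus, vertical rescaling to obtain a uniformly controlled twist, R\"ussmann/Moser (with the invariant probability measure forcing translated curves to be invariant), and finally saturation of the resulting graphs under $f_{\e}$. The only substantive imprecision is your last clause: the KAM property of the saturated circle $\hat\Gamma$ does not follow directly from the regularity threshold of Moser/R\"ussmann (which linearizes the renormalized map on $\bar\gamma$, not $f_{\e}$ on $\hat\Gamma$); the paper instead shows that the rotation numbers satisfy $\{1/\hat\alpha\}=\bar\alpha$, so $\hat\alpha$ is Diophantine with the same exponent, and then invokes the Herman--Yoccoz linearization theorem for circle diffeomorphisms, which is where the threshold $k_{1}$ actually comes from.
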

We shall give the proof of Theorem \ref{theo:2.1} in Section \ref{sec:8}.

\bigskip The proof of Proposition \ref{prop:2.5} occupies the rest of this Section. It will be based on a first reduction obtained by  performing some steps of Birkhoff Normal Forms (Proposition \ref{BNFbis}) and then the application of various Sternberg like Normal Forms (Corollary \ref{cor:2.3} and Proposition \ref{prop:2.7}).

\subsection{Birkhoff Normal Form for the time-periodic vector field $X_{\e}^t$}
A preliminary step in  Sternberg's classical  Linearization Theorem is to first conjugate the considered   system (diffeomorphism or vector field) defined in the neighborhood of the hyperbolic fixed point $o$ to a system which is tangent to an integrable model to some high enough order. This is what we do  in this subsection and in a symplectic framework (see Proposition \ref{BNFbis}) by using Birkhoff Normal Form techniques.  
\subsubsection{Periodically forced  vector fields}\label{sec:2.1.1}
 Let $X:\R\times\R^2\ni (t,x)\mapsto X^{t}(z):=X(t,z)\in\R^2$  be a smooth  time dependent  symplectic vector field:  for each $t$  the 1-form $i_{X_{t}}\omega$ is closed (and hence locally exact). For $t,s\in\R$ we denote by $\phi^{t,s}_{X}$ the flow of $X$ between times $s$ and $t$ when it is defined (see  footnote \ref{footnote2} for the definition of $\phi^{t,s}_{X}$). 
If $t\mapsto g^{t}(\cdot)$ is a one-parameter family of symplectic diffeomorphisms one has 
\be  g^{t}\circ \phi_{X}^{t,s}\circ (g^{s})^{-1}=\phi^{t,s}_{\ti X}\label{*1}\ee
where $\ti X:(t,z)\mapsto \ti X^{t}(z):=\ti X(t,z)$  is the  smooth  time dependent  symplectic  
vector field
\be\ti X^{t}=\pa_{t}g^{t}\circ (g^{t})^{-1}+(g^{t})_{*}X^{t}.\label{*2}
\ee
Conversely, if (\ref{*2}) is satisfied then so is  (\ref{*1}). Note that if $g^t$ depends 1-periodically on $t$ then  (\ref{*1}) yields    the   more classical  conjugation equation 
$$g\circ \phi^{1,0}_{X}\circ g^{-1}=\phi_{\ti X}^{1,0}
$$
where $g=g^0=g^1$ ($g^t$ is 1-periodic in $t$).

Assume now  that  $X^{t}$ depends  1-periodically in $t$ and in a smooth way  on a small parameter $\e\in\R$; we furthermore assume that it is of the form  
\be X_{\e}^t(z)=J\nabla H_{\e}^t(z)\label{2.4*}\ee where ($z=(z_{1},z_{2})\in\R^2$)
 \be H_{\e}^{t}(z)=\l_{\e}(t)z_{1}z_{2}+O^3(z),\qquad \int_{\T}\l_{\e}(t)dt>0,\qquad \l_{0}(t)=\l\in\R^*_{+},\label{defH}\ee
$H_{\e}:\R/\Z\times \R^2\to \R$, $H_{\e}:(t,z)\mapsto H_{\e}(t,z):=H_{\e}^{t}(z)$ being  a smooth function.
Assume also that for some $j\in\N^*$
$$g^t_{\e}(z)=\phi^{1}_{J\nabla G_{\e}^t}(z)=id+O^j(z),\qquad G_{\e}^t(z)=O^{j+1}(z)$$
where   $G:I\times \R/\Z\times(\R^2,o)\ni (\e,t,z)\mapsto G_{\e}(t,z):=G_{\e}^{t}(z)\in\R$ is a smooth function. 
Then, one has 
$$\pa_{t}g_{\e}^{t}\circ (g_{\e}^{t})^{-1}=J\nabla \pa_{t}G_{\e}^t+O^{j+1}(z)
$$
$$(g_{\e}^{t})_{*}X_{\e}^{t}=J\nabla H_{\e}^t\circ (g_{\e}^{t})^{-1}=J\nabla H_{\e}^{t}+J\nabla\{G_{\e}^{t},H_{\e}^t\}+O^{j+1}(z)
$$
(here  $\{A,B\}$ denotes the Poisson bracket $\{A,B\}=\<\nabla A,J\nabla B\>$) so that $\ti X_{\e}^t$ defined by (\ref{*2}) is of the form 
\be \ti X_{\e}^t= J\nabla \ti H_{\e}^t\label{2.5*}\ee with 
\begin{align}\ti H_{\e}^t&=H_{\e}^t+\pa_{t}G_{\e}^{t}+\{G_{\e}^{t},H_{\e}^{t}\}+O^{j+2}(z)\label{2.6*}\\
&=H_{\e}^t+\pa_{t}G_{\e}^t+\{G_{\e}^t,H_{2,\e}^t\}+O^{j+2}(z)\label{3*}
\end{align}
where we have denoted $H_{2,\e}^t(z_{1},z_{2})=\l_{\e}(t)z_{1}z_{2}$.

If in the preceding equation one chooses $G^{t}_{\e}=G_{\e,2}^t$ with $G_{\e,2}^t(z)=a_{\e,0}(t)z_{1}z_{2}$ where $a_{\e,0}$ is the 1-periodic function defined by 
$$a_{\e,0}(t)=-\int_{0}^t\biggl(\l_{\e}(s)-\int_{\T}\l_{\e}(u)du\biggr)ds
$$
one has
$$ \ti H_{\e}^t(z)=\bar\l_{\e}z_{1}z_{2}+O^3(z)$$
where $\bar \l_{\e}=\int_{\T}\l_{\e}(t)dt$. In other words, performing a change of coordinates  (\ref{*2}) on $X_{\e}^t$ with $g^t_{\e}=g_{\e,2}^t=\phi^{1}_{J\nabla G^t_{\e,2}}$ we can assume that in (\ref{defH}) $\l_{\e}(t)$ does not depend on $t$
\be H_{\e}^t(z)=\l_{\e}z_{1}z_{2}+O^3(z),\qquad \l_{\e}\in\R^*_{+}\label{2.9*}\ee
(we write $\l_{\e}$ in place of $\bar \l_{\e}$).

\subsubsection{Birkhoff Normal Form}
Having put $H_{\e}^t$ under the form (\ref{2.9*}), we now eliminate  by successive conjugations (\ref{*2})  {\it non-diagonal} higher order terms in $z$ from $H_{\e}^t$ (note that they depend on $t$).

\medskip The following lemma describes this elimination procedure.
\begin{lemma}\label{lemma:Birkhoff}Let $j\in\N$, $j\geq 2$. Assume that for some polynomials $q_{\e}(s)=\l s +O(s^2)\in \R[s]$ of degree $\leq [ j/2]$ depending smoothly on $\e$
$$H_{\e}^t(z)=q_{\e}(z_{1}z_{2})+O^{j+1}(z).$$
Then, there exist a smooth  family $(\ti q_{\e})_{\e}$ of polynomials  $\ti q_{\e}(s)=\l s+O(s^2)\in \R[s]$ of degree $\leq [(j+1)/2]$ and a smooth  family of smooth maps $G_{\e}:\R/\Z\times (\R^2,o)\ni (t,z)\to G_{\e}(t,z)=G^t_{\e}(z)\in\R^2$ such that   on a neighborhood of $o$
\be \begin{cases}&G_{\e}^t(z)=O^{j+1}(z)\\
&H_{\e}^t(z)+\pa_{t}G_{\e}^t(z)+\{G_{\e}^t,H_{\e}^t\}(z)=\ti q_{\e}(z_{1}z_{2})+O^{j+2}(z).\end{cases}\label{eq:2.8}\ee
Moreover, if for $\e=0$, $H_{0}^t$ does not depend on $t$, one can choose $G_{0}^t$ to be independent of $t$.
\end{lemma}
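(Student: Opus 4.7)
The plan is to work degree by degree and reduce to a collection of scalar linear periodic ODEs. Write the obstruction to normalization as a homogeneous polynomial in $z$ of degree $j+1$ and seek $G_\e^t$ also homogeneous of degree $j+1$; then the non-linear terms in the equation to be solved fall into the remainder.

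First I would decompose
\[
 H_\e^t(z)=q_\e(z_1 z_2)+R_\e^t(z)+O^{j+2}(z),
\]
where $R_\e^t(z)=\sum_{a+b=j+1}r_{a,b}^\e(t)\,z_1^a z_2^b$ is the $(j{+}1)$-homogeneous part of $H_\e^t-q_\e(z_1 z_2)$, with coefficients depending smoothly on $(\e,t)$ and $1$-periodically on $t$. I look for $G_\e^t(z)=\sum_{a+b=j+1}g_{a,b}^\e(t)\,z_1^a z_2^b$. A direct computation using $\{A,z_1z_2\}=z_2\partial_{z_2}A-z_1\partial_{z_1}A=:DA$ shows that the $(j{+}1)$-homogeneous part of $\partial_t G_\e^t+\{G_\e^t,H_\e^t\}$ is exactly $\partial_t G_\e^t+\lambda D G_\e^t$: indeed, $\{G_\e^t,q_\e(z_1z_2)-\lambda z_1z_2\}$ has degree $\geq j+3$, and $\{G_\e^t,R_\e^t\}$ has degree $2j\geq j+2$ for $j\geq 2$; all these terms go into $O^{j+2}(z)$. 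The cohomological equation at degree $j+1$ therefore splits monomial-by-monomial into the family of scalar $1$-periodic ODEs
\[
 \dot g_{a,b}^\e(t)+\lambda(b-a)\,g_{a,b}^\e(t)=-r_{a,b}^\e(t)+\delta_{a,b}\,\tilde c_\e,\qquad a+b=j+1.
\]

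Second, I would solve these ODEs. For a non-resonant monomial ($a\ne b$), the operator $\partial_t+\lambda(b-a)\cdot$ is invertible on $C^\infty(\T)$ because $\lambda(b-a)\ne 0$, so there is a unique $1$-periodic smooth solution $g_{a,b}^\e(t)$, depending smoothly on $\e$. For a resonant monomial, which can only occur when $j+1=2m$ is even (hence $j$ odd) and $a=b=m$, the periodicity obstruction forces
\[
 \tilde c_\e:=\int_0^1 r_{m,m}^\e(t)\,dt,
\]
and then $g_{m,m}^\e(t):=-\int_0^t\bigl(r_{m,m}^\e(s)-\tilde c_\e\bigr)\,ds$ (adjusted by a constant to normalize) is smooth and $1$-periodic. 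Setting
\[
 \tilde q_\e(s):=q_\e(s)+\tilde c_\e\,s^{(j+1)/2}\ \text{if $j$ is odd},\qquad \tilde q_\e:=q_\e\ \text{if $j$ is even},
\]
yields a polynomial of degree $\leq [(j+1)/2]$ with $\tilde q_\e(s)=\lambda s+O(s^2)$, and the resulting $G_\e^t$ satisfies (\ref{eq:2.8}) by construction.

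Finally I would address the two additional statements. Smoothness in $\e$ is automatic: the right-hand sides $r_{a,b}^\e(t)$ depend smoothly on $\e$, the inversion of $\partial_t+\lambda(b-a)$ on $C^\infty(\T)$ is bounded, and taking time averages is continuous, so $\tilde c_\e$ and $g_{a,b}^\e(t)$ depend smoothly on $\e$. For the last claim, if $H_0^t$ is $t$-independent then so is $R_0^t$, so each $r_{a,b}^0$ is a constant; the unique $1$-periodic solution of $\dot g+\lambda(b-a)g=-r$ with $r$ constant is the constant $-r/[\lambda(b-a)]$ when $a\ne b$, while for $a=b$ one has $\tilde c_0=r_{m,m}^0$ and $g_{m,m}^0\equiv 0$. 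Hence $G_0^t$ is $t$-independent.

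I do not expect a genuine obstacle here: the whole argument is classical Birkhoff normal form, and the only non-algebraic ingredient is the (trivial) solvability of the periodic ODE $\dot g+\mu g=-r$ for $\mu\ne 0$. The only point requiring some care is bookkeeping the degree of $\tilde q_\e$, which is handled by the parity case distinction on $j$.
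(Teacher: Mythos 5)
Your proposal is correct and follows essentially the same route as the paper's proof in the appendix: reduce the cohomological equation at degree $j+1$ to the monomial-by-monomial periodic scalar ODEs $\dot g_{a,b}+\l(b-a)g_{a,b}=-r_{a,b}$ (noting that the brackets with the higher-order diagonal terms and with the degree-$(j+1)$ part itself fall into $O^{j+2}$ since $2j\geq j+2$), solve the non-resonant ones by invertibility of $\pa_t+\l(b-a)$ on periodic functions, and absorb the average of the resonant diagonal term (present only when $j+1$ is even) into the new coefficient of $\ti q_\e$. The observations on smooth $\e$-dependence and on the $t$-independence of $G_0^t$ when $H_0^t$ is autonomous also match the paper's argument.
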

\begin{proof}See the Appendix, Section \ref{appendix:Birkhoff+Sternberg}.
\end{proof}

\bigskip 
Let now $X_{\e}^{t}$ be the family of vector fields of (\ref{eq:1.1}).
\begin{prop}\label{BNFbis}For any $N\geq 1$ there exist  an open neighborhood $V_{N}$ of $o$, a  smooth two-parameters family $(b_{\e}^t)_{\e\in I,t\in\R/\Z}$ ($I$ some open interval containing 0) of smooth symplectic diffeomorphisms $b_{\e}^t:(\R^2,o)\toitself$ satisfying   $b_{\e}^{t}(o)=o$, $Db_{\e}^t(o)=id$ and a smooth family of polynomials $q_{\e,N}(s)=\l s+O(s^2)$ of degree $\leq [(N+1)/2]$, such that for any $\e\in I$, $t\in\R/\Z$, $(x,y)\in V_{N}$ one has
\begin{align*}X_{\e}^{(1),t }&\mathop{=}_{defin.}\ (\pa_{t}b^t_{\e})\circ (b^t_{\e})^{-1}+(b^t_{\e})_{*}X_{\e}^t\\ &=J\nabla Q_{\e,N}+O^{N+1}(x,y)
\quad \textrm{with }\quad  Q_{\e,N}(x,y)=q_{\e,N}(xy)\end{align*}
and for $\e=0$, $b^t_{0}$ is independent of  $t$.

\end{prop}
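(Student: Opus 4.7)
The plan is to prove Proposition \ref{BNFbis} by iterating Lemma \ref{lemma:Birkhoff} after an initial reduction that handles the quadratic part of the Hamiltonian. First, I would apply the preliminary normalization of Section \ref{sec:2.1.1}: conjugation by the time-$1$ flow of the quadratic Hamiltonian $a_{\e,0}(t)z_1 z_2$ replaces $H_\e^t$ by $\lambda_\e z_1 z_2 + O^3(z)$ with $\lambda_\e$ independent of $t$, placing us at the starting point for Lemma \ref{lemma:Birkhoff} with $j=2$. Since $a_{\e,0}(0)=a_{\e,0}(1)=0$ by construction, this preliminary step can be arranged within a family compatible with the requirement $Db_\e^t(o)=id$, up to an implicit autonomous rescaling of coordinates.

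Next, I would iterate Lemma \ref{lemma:Birkhoff}: suppose that after some conjugations we have a Hamiltonian of the form $q_\e(z_1 z_2)+O^{j+1}(z)$ with $\deg q_\e \leq [j/2]$. The Lemma produces a generating function $G_{\e,j}^t=O^{j+1}(z)$ whose time-$1$ symplectic flow $g_{\e,j}^t=\phi^1_{J\nabla G_{\e,j}^t}$ pushes the Hamiltonian to $\tilde q_\e(z_1 z_2)+O^{j+2}(z)$ with $\deg \tilde q_\e \leq [(j+1)/2]$. Formulas \eqref{*1}--\eqref{*2} confirm that the composition of these symplectic conjugations transforms the time-periodic vector field in the right way, so the iteration proceeds cleanly from $j=2$ up to $j=N$. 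Setting $b_\e^t = g_{\e,N}^t \circ \cdots \circ g_{\e,2}^t$ (combined with the preliminary step) and $Q_{\e,N}(x,y)=q_{\e,N}(xy)$ with $q_{\e,N}$ the final polynomial, we obtain the required normal form $X_\e^{(1),t} = J\nabla Q_{\e,N}+O^{N+1}(x,y)$.

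The claimed properties follow step by step. The fixed point $b_\e^t(o)=o$ is preserved trivially at each stage. The identity $Db_\e^t(o)=id$ holds for the iterative steps because $J\nabla G_{\e,j}^t = O^j(z)$ with $j\geq 2$, so the time-$1$ flow has identity linear part at $o$; smooth dependence in $(\e,t)$ propagates through the finite composition from the smoothness guaranteed by the Lemma. For $\e=0$, the vector field $X_0$ is autonomous, so $H_0^t$ remains $t$-independent at every stage, and the ``moreover'' clause of Lemma \ref{lemma:Birkhoff} furnishes $G_{0,j}^t$ independent of $t$; consequently $b_0^t$ is itself $t$-independent. The main obstacle is reconciling the quadratic preliminary reduction with the identity-linear-part requirement, which is a matter of careful bookkeeping rather than an essential difficulty; everything else is a routine finite induction on $N$ with uniform control on a single neighborhood $V_N$ of $o$ extracted from the common domain of the finitely many compositions.
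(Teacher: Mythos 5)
Your proposal is correct and follows essentially the same route as the paper: the preliminary reduction of Section \ref{sec:2.1.1} to the form (\ref{2.9*}), followed by a finite induction on Lemma \ref{lemma:Birkhoff} via (\ref{*2})--(\ref{2.6*}), setting $b_\e^t=g_{\e,N}^t\circ\cdots\circ g_{\e,2}^t$ and reading off $Db_\e^t(o)=id$ from $G_{\e,j}^t=O^{j+1}(z)$, $j\ge 2$, and the $t$-independence at $\e=0$ from the ``moreover'' clause of the Lemma, exactly as you do. The quadratic bookkeeping you worry about is handled in the paper simply by starting the induction from (\ref{2.9*}), i.e.\ treating that reduction as already incorporated into $X_\e^t$ before the composition defining $b_\e^t$ begins.
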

\begin{proof}
Applying the preceding Lemma \ref{lemma:Birkhoff} and relations (\ref{*2})-(\ref{2.6*}) inductively (starting from (\ref{2.9*})) we thus construct polynomials  $q_{\e,j}$ of degree $\leq[ j/2]$  ($j\geq 2$) and functions $G_{\e,j}^t=O^{j+1}(z)$ such that if one defines
$$b^t_{\e}=g^t_{\e,N}\circ\ldots\circ g^t_{\e,2}=id+O^2(z),\qquad g^t_{\e,j}=\phi^1_{J\nabla G^t_{\e,j}}=id+O^{j}(z)$$
one has 
\begin{align*}\ti X_{\e}^{t}:&=\pa_{t}b_{\e}^{t}\circ (b_{\e}^{t})^{-1}+(b_{\e}^{t})_{*}X_{\e}^{t}\\
&=J\nabla Q_{\e,N}+O^{N+1}(z),\quad  \textrm{with}\quad Q_{\e,N}(z)=q_{\e,N}(z_{1}z_{2})
\end{align*}
all dependences on $\e$ being smooth. Moreover,  if $X_{0}^t$ is independent of $t$ the diffeomorphism $b_{0}^t$ is independent of $t$. 
\end{proof}
\begin{rem}Note that since $b_{0}^t\equiv b_{0}$ is independent of $t$,  the vector field 
$$X_{0}^{(1)}=(b_{0})_{*}X_{0}$$
is autonomous.
\end{rem}
\subsection{Symplectic Sternberg theorem for the autonomous vector field $X_{0}^{(1)}$ }
We shall need a {\it symplectic} version of the famous theorem by S. Sternberg  (on smooth linearization of  hyperbolic germs of smooth vector fields, see \cite{Sternberg}) as proved in \cite{BLW} or \cite{Chaperon} (see also \cite{Sternberg2}). We follow here the exposition of \cite{BLW}.

 Let $Z_{i}$, $i=1,2$  be two symplectic smooth autonomous vector fields such that for some $\l\in\R^*$ and $N\in\N$ one has
\be\begin{cases}& Z_{i}(x,y)=-\l x\frac{\pa}{\pa x}+\l y\frac{\pa}{\pa y}+O^2(x,y)\ (i=1,2)\\
&Z_{1}(x,y)-Z_{2}(x,y)=O^{N+1}(x,y).
\end{cases}.\label{BNFZ}
\ee
\begin{theo}[\cite{BLW}, Theorem 1.2]\label{prop:SternbergZbis}There exist positive constants $A,B$ for which the following holds. Let $m\in\N^*$ large enough and $N=[(m+B)/A]+1\geq 1$. If (\ref{BNFZ}) is satisfied then there exists a $C^{m}$ symplectic change of coordinates $S_{0}:(\R^2,0)\toitself$ such that on a neighborhood of $o$
\be \begin{cases}&(S_{0})_{*}Z_{1}=Z_{2}\\
&S_{0}(o)=0,\quad DS_{0}(o)=id.\end{cases}\label{2.7}
\ee
\end{theo}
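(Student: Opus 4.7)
The plan is to use a Moser-type path deformation to produce $S_0$. Set $Z_\tau = Z_1 + \tau(Z_2 - Z_1)$, $\tau \in [0,1]$, and search for a $C^m$ symplectic isotopy $(\psi^\tau)_{\tau \in [0,1]}$ of $(\R^2,o)$, with $\psi^0 = \id$, $\psi^\tau(o) = o$, $D\psi^\tau(o) = \id$, satisfying the moving conjugation equation $(\psi^\tau)_*Z_1 = Z_\tau$; then $S_0 := \psi^1$ gives (\ref{2.7}). Writing $\pa_\tau \psi^\tau = V^\tau \circ \psi^\tau$ and differentiating in $\tau$ reduces this to the Lie bracket equation $[Z_\tau,V^\tau] = Z_2 - Z_1$. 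Near $o$, $Z_\tau = J\nabla H_\tau$ with $H_\tau(x,y) = \lambda xy + O^3(x,y)$, and in dimension two any symplectic $V^\tau$ is locally Hamiltonian, $V^\tau = J\nabla K^\tau$. The bracket equation becomes the scalar Poisson equation
\begin{equation*}
\{H_\tau,K^\tau\} = r,\qquad r(x,y) = O^{N+2}(x,y),
\end{equation*}
where $r$ is a primitive of the closed $1$-form $i_{Z_2-Z_1}\omega$.

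The analytic core is the inversion of $L_\tau := \{H_\tau,\cdot\}$ on germs vanishing to high order at $o$. Its principal part $\{\lambda xy,\cdot\} = \lambda(y\pa_y - x\pa_x)$ is diagonal on monomials, $x^ay^b \mapsto \lambda(b-a)x^ay^b$, with kernel the formal series in $xy$. I would invert $L_\tau$ by integrating along the hyperbolic flow $\Phi_\tau^t$ of $Z_\tau$: decomposing $r = r^+ + r^- + r^0$, where $r^+$ (resp.\ $r^-$) collects monomials $x^a y^b$ with $b < a$ (resp.\ $b > a$) and $r^0$ is the resonant part depending only on $xy$, set
\begin{equation*}
K^\tau(z) = \int_{0}^{+\infty} r^+\bigl(\Phi_\tau^t(z)\bigr)\,dt \;-\; \int_{0}^{+\infty} r^-\bigl(\Phi_\tau^{-t}(z)\bigr)\,dt.
\end{equation*}
Both integrals converge absolutely on a fixed neighborhood of $o$: $\Phi_\tau^t$ contracts the local stable axis and expands the unstable one at rates close to $\lambda$, so $r^+\circ \Phi_\tau^t$ and $r^-\circ\Phi_\tau^{-t}$, and finitely many of their derivatives, decay exponentially. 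The resonant obstruction $r^0$, which \emph{a priori} lies in the kernel of the linear part, is removed beforehand: applying the autonomous analogue of Proposition \ref{BNFbis} separately to $Z_1$ and $Z_2$ brings each to the form $J\nabla q(xy) + O^{N+1}$ with the \emph{same} Birkhoff polynomial $q$ (the $xy$-coefficients of the BNF being determined by $Z_i$ modulo $O^{N+1}$ and therefore common to $Z_1$ and $Z_2$ by hypothesis); the resulting $r^0$ then vanishes to order $N+2$ and may be absorbed into a $\tau$-reparametrization of $H_\tau$ along its own trajectories.

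With the homological equation solved, the chain of implications is: $r \in C^{m+A}$, vanishing to order $N+1$ at $o$, yields $K^\tau \in C^{m}$ (with a finite loss $A$ of derivatives arising from the flow-integrals and the differentiations needed to recover $V^\tau$); therefore $V^\tau = J\nabla K^\tau$ vanishes to order $\ge N$ at $o$, so the non-autonomous ODE $\pa_\tau \psi^\tau = V^\tau \circ \psi^\tau$ generates a $C^{m-1}$ symplectic flow on some neighborhood of $o$ independent of $\tau \in [0,1]$, fixing $o$ with identity differential. Taking $S_0 := \psi^1$ completes the proof. Bookkeeping the $A$ derivatives lost in solving the homological equation, together with the $B$ derivatives consumed by the Hamiltonian flow, yields the quantitative threshold $N = [(m+B)/A]+1$.

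The main obstacle is the sharp controlled inversion of $L_\tau$: one must show that the above flow-integrals produce a $C^m$ function on a neighborhood of $o$ \emph{uniform in} $\tau \in [0,1]$, with the stated finite derivative loss, and that the diagonal/resonant obstruction has indeed been eliminated by the preliminary Birkhoff preparation. Everything else — the Moser deformation scheme, the Hamiltonian (hence symplectic) character of $S_0$, and the normalization $S_0(o)=o$, $DS_0(o)=\id$ — follows formally from the high-order vanishing of $V^\tau$ at the fixed point.
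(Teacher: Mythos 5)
This statement is not proved in the paper at all: it is quoted verbatim from Banyaga--de la Llave--Wayne \cite{BLW} (Theorem 1.2) and used as a black box, so the only meaningful comparison is with the strategy of that reference. Your sketch does follow that strategy in outline (deformation/path method reducing the conjugacy to a cohomological equation $\{H_\tau,K^\tau\}=r$ for the hyperbolic Hamiltonian, solved by integrating along the flow), and the derivation $[Z_\tau,V^\tau]=Z_2-Z_1$, the reduction to a scalar Poisson equation in dimension two, and the formal bookkeeping $m\sim AN-B$ are all sound.

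There are, however, two genuine gaps at exactly the points you flag as ``the main obstacle'', and the first is stated in a form that is actually false. (a) The claim that $\int_0^{+\infty}r^+(\Phi_\tau^t(z))\,dt$ ``converges absolutely on a fixed neighborhood of $o$'' cannot be taken as written: near a saddle, every orbit off the stable manifold leaves any small neighborhood of $o$ in finite forward time, so $r^+\circ\Phi_\tau^t(z)$ is not even defined for large $t$ unless $r^+$ has been globally extended, and the splitting of $r$ into monomials with $a>b$, $a<b$, $a=b$ only exists for the Taylor polynomial of $r$. The flat remainder must instead be decomposed into a piece vanishing to high order along $\{x=0\}$ (summed/integrated forward) and a piece vanishing to high order along $\{y=0\}$ (integrated backward), each extended to a global function with controlled derivatives; establishing this decomposition with tame estimates is the technical heart of \cite{BLW} and is entirely missing from your argument. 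Moreover each $z$-derivative of the integrand costs a factor $\|D\Phi_\tau^t\|\sim e^{\lambda t}$, which is where the linear loss encoded in $N=[(m+B)/A]+1$ actually originates — this needs to be quantified, not just named $A$. (b) The resonant part $r^0$ is not disposed of by ``absorbing it into a $\tau$-reparametrization'': it lies in the cokernel of $\{\lambda xy,\cdot\}$, and the hypothesis $Z_1-Z_2=O^{N+1}$ does \emph{not} make it vanish (e.g.\ $Z_2-Z_1=J\nabla\bigl((xy)^k\bigr)$ with $2k=N+2$ is admissible and purely resonant). One must either show that the equation $\{\lambda xy,K\}=(xy)^k$ admits a finitely smooth solution (it does, of the form $(xy)^k\ln(x/y)/(2\lambda)$, at the cost of roughly half the order of vanishing in regularity), or conjugate both fields to a common Birkhoff--Sternberg normal form; either way this step requires its own estimate and contributes to the constants $A,B$.
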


We apply the preceding theorem to the case $Z_{1}=X_{0}^{(1)}$ and $Z_{2}=J\nabla Q_{0,N}$ ($X_{0}^{(1)}$, $Q_{0,N}$ given by Proposition \ref{BNFbis} when $\e=0$). In view of Proposition \ref{BNFbis}, the condition (\ref{BNFZ}) is satisfied and we hence get a symplectic diffeomorphism $S_{0}$ satisfying $S_{0}(o)=0$, $DS_{0}(o)=id$ and  such that on a neighborhood of $o$
$$(S_{0})_{*}X_{0}^{(1)}=J\nabla Q_{0,N}.$$
For each value of $t\in\R/\Z$ and $\e\in I$, the diffeomorphism $(S_{0}\circ b^t_{\e})$ fixes the origin and its derivative at the origin is the identity. It can thus be extended as a symplectic $C^{m}$-diffeomorphism $R^t_{\e}$ of $\R^2$ ({\it cf.} Lemma \ref{lemma:extension}). Note that the dependence of $R_{\e}^t$ w.r.t.  $t$ is smooth and 1-periodic ($t\in\R/\Z$).  We now define on $\R/\Z\times \R^2$ the time-periodic vector field $X_{\e}^{(2)}:(t,(x,y))\in (\R/\Z)\times\R^2\to\R^2$ by 
\be X_{\e}^{(2),t}\mathop{=}_{defin.}(\pa_{t}R^t_{\e})\circ (R^t_{\e})^{-1}+(R^t_{\e})_{*}X_{\e}^t\label{2.8}\ee
and we observe that on a neighborhood of $o$
$$(R^t_{0})_{*}X_{0}=X_{0}^{(2)}=J\nabla Q_{0,N}.$$

Since the conjugacy relation (\ref{2.8}) is equivalent to (see subsection \ref{sec:2.1.1})
$$\forall\ t,s,\quad R^t_{\e}\circ \phi^{t,s}_{X_{\e}}\circ (R_{\e}^s)^{-1}=\phi^{t,s}_{X^{(2)}_{\e}}$$
we get by taking $t=1$, $s=0$ and setting $R_{\e}:=R^1_{\e}=R^0_{\e}$, the following corollary:
\begin{cor}\label{cor:2.3}If $m\in\N^*$ is large enough and $N=[(m+B)/A]+1$, there exists a smooth family $(R_{\e})$ of $C^m$ symplectic  diffeomorphisms of $\R^2$ such that $R_{\e}(o)=o$, $DR_{\e}(o)=id$ and on a neighborhood of $o$
\begin{align}f_{\e}^{(1)}:&\mathop{=}_{defin.}R_{\e}\circ f_{\e}\circ (R_{\e})^{-1}\label{2.14*}\\
&=\phi^1_{J\nabla Q_{\e,N}}+O^{N+1}(x,y).\label{2.15*}\end{align}
Moreover
\be (R_{0})_{*}X_{0}=J\nabla Q_{0,N}.\label{2.15**}\ee
\end{cor}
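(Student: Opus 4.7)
The plan is to combine the Birkhoff normal form of Proposition~\ref{BNFbis} with the symplectic Sternberg theorem (Theorem~\ref{prop:SternbergZbis}), and then specialize the resulting time-dependent conjugacy to the time-$1$ map. First I would apply Proposition~\ref{BNFbis} to the family $(X_\e^t)$ of (\ref{eq:1.1}) with the value of $N$ prescribed by $N=[(m+B)/A]+1$: this yields a smooth $1$-periodic family of symplectic germs $(b_\e^t)$ at $o$ with $b_\e^t(o)=o$ and $Db_\e^t(o)=\mathrm{id}$, such that the conjugated time-dependent vector field $X_\e^{(1),t}$ coincides with $J\nabla Q_{\e,N}+O^{N+1}(x,y)$ on a neighborhood of $o$, and such that $b_0^t\equiv b_0$ is independent of $t$. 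In particular $X_0^{(1)}=(b_0)_*X_0$ is autonomous near $o$.

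Next, I would invoke Theorem~\ref{prop:SternbergZbis} with $Z_1=X_0^{(1)}$ and $Z_2=J\nabla Q_{0,N}$. These are two autonomous symplectic vector fields sharing the same hyperbolic linear part at $o$ and agreeing up to order $N$; the choice $N=[(m+B)/A]+1$ is precisely the hypothesis of the theorem, so one obtains a symplectic $C^m$ germ $S_0$ with $S_0(o)=o$, $DS_0(o)=\mathrm{id}$, and $(S_0)_*X_0^{(1)}=J\nabla Q_{0,N}$ on a neighborhood of $o$.

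I would then consider the composite germ $S_0\circ b_\e^t$, which fixes $o$ with identity derivative there, and apply the extension Lemma~\ref{lemma:extension} to obtain a symplectic $C^m$-diffeomorphism $R_\e^t$ of $\R^2$ depending smoothly on $\e$ and smoothly and $1$-periodically on $t$. Setting $R_\e:=R_\e^0=R_\e^1$ and defining
\[
X_\e^{(2),t}:=(\partial_tR_\e^t)\circ(R_\e^t)^{-1}+(R_\e^t)_*X_\e^t,
\]
the general formulas (\ref{*1})--(\ref{*2}) of \S\ref{sec:2.1.1} give the flow conjugacy $R_\e^t\circ\phi^{t,s}_{X_\e}\circ(R_\e^s)^{-1}=\phi^{t,s}_{X_\e^{(2)}}$. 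Evaluating at $t=1$, $s=0$ produces (\ref{2.14*}). Moreover $X_0^{(2),t}=(R_0)_*X_0=J\nabla Q_{0,N}$ on a neighborhood of $o$ (since $b_0^t$ and hence $R_0^t$ are autonomous), which is (\ref{2.15**}).

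The remaining point is to pass from the vector-field identity $X_\e^{(2),t}=J\nabla Q_{\e,N}+O^{N+1}(x,y)$ to the corresponding identity (\ref{2.15*}) for the time-$1$ map. I would use a Duhamel-type formula for the difference of two flows: since the unperturbed vector field $J\nabla Q_{\e,N}$ fixes $o$ with hyperbolic linear part and the perturbation vanishes at $o$ to order $N+1$, a routine Gronwall/contraction argument propagates this order of vanishing to the difference of time-$1$ maps, yielding $f_\e^{(1)}=\phi^1_{J\nabla Q_{\e,N}}+O^{N+1}(x,y)$. The main obstacle is mostly book-keeping: tracking the loss of derivatives through each step so that the final map is genuinely $C^m$, and checking that the $\e$-dependence (entering only through $b_\e^t$, while $S_0$ and the extension are independent of $\e$) remains smooth. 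The substance lies entirely in Proposition~\ref{BNFbis} and the symplectic Sternberg theorem, both already in hand.
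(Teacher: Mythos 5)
Your proposal follows the paper's proof step for step: Proposition~\ref{BNFbis}, then Theorem~\ref{prop:SternbergZbis} applied to $Z_1=X_0^{(1)}$ and $Z_2=J\nabla Q_{0,N}$, then the extension of the germ $S_0\circ b_\e^t$ to a global family $R_\e^t$ via Lemma~\ref{lemma:extension} and the flow conjugacy evaluated at $t=1$, $s=0$. The only difference is that you make explicit the passage from the $O^{N+1}$ identity for the conjugated vector field to the identity (\ref{2.15*}) for the time-$1$ maps, a step the paper leaves implicit; this is correct and consistent with the paper's intent.
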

Note that the last equation shows that
\be f_{0}^{(1)}=\phi^1_{J\nabla Q_{0,N}}.\label{f01}\ee

\subsection{Symplectic Sternberg Normal Form for the diffeomorphism $f_{\e}^{(1)}$}
Theorem \ref{prop:SternbergZbis}  has a version for  smooth germs of symplectic  diffeomorphisms which are hyperbolic at the origin, this is Theorem 1.1 of \cite{BLW}. In our paper we shall need a parametric version of that result, which is not explicitly stated in \cite{BLW} but that can be checked after   close examination of the proof. 
\begin{prop}\label{prop:2.7}There exist constants $A_{1},B_{1}$ depending on $\l\in\R^*$ such that the following holds. Let $m\in\N^*$ large enough  and $N=[m/2]-3$.     If $(g_{1,\e})_{\e\in I}$ and $(g_{2,\e})_{\e\in I}$ ($I\ni 0$ some open interval of $\R$)  are two continuous (w.r.t.  $\e\in I$) families of $C^m$ symplectic diffeomorphisms $(\R^2,o)\toitself$ such that 
\be \begin{cases}&\forall \e,\quad g_{1,\e}(o)=g_{2,\e}(o)=o\\
&Dg_{1,0}(o)={\rm diag}(\l,\l^{-1}) \quad \textrm{(is \ hyperbolic)}\\
&g_{1,\e}(x,y)=g_{2,\e}(x,y)+O^{N+1}(x,y)\\
&g_{1,0}=g_{2,0}
\end{cases}\label{assumpg12}\ee
then, there exists a continuous family $(S^{(1)}_{\e})_{\e}$ (w.r.t. $\e\in I$ small enough),  of $C^k$ symplectic diffeomorphisms such that $S^{(1)}_{\e}(o)=o$, $DS^{(1)}_{\e}(o)=id$ with $k=[NA_{1}-B_{1}]-1$ and 
$$\begin{cases}&S^{(1)}_{\e}\circ g_{1,\e}\circ (S^{(1)}_{\e})^{-1}=g_{2,\e}\\
&S^{(1)}_{0}=id.\end{cases}$$
\end{prop}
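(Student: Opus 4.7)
\medskip\noindent\emph{Proof plan for Proposition \ref{prop:2.7}.}

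The plan is to follow the strategy of Theorem~1.1 in \cite{BLW}, adapted to the parametric setting in which $\e$ is a continuous external parameter and the normalization $S^{(1)}_{0}=id$ (allowed by the hypothesis $g_{1,0}=g_{2,0}$) serves as the natural initial condition. Introduce the homotopy $g_{t,\e}(x)=g_{2,\e}(x)+t\bigl(g_{1,\e}(x)-g_{2,\e}(x)\bigr)$, so that $g_{0,\e}=g_{2,\e}$ and $g_{1,\e}$ are connected by a path of diffeomorphisms that, for $\e$ small, remain symplectic (up to $C^{N+1}$-flat corrections coming from the Hamiltonian formalism, to be adjusted by a small $O^{N+1}$-flat correction to $g_{t,\e}$) and hyperbolic at $o$ with eigenvalues close to $\{\l,\l^{-1}\}$. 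Search for a family $(S_{t,\e})$ of symplectic $C^k$-diffeomorphisms with $S_{0,\e}=id$ satisfying the conjugation
$$S_{t,\e}\circ g_{1,\e}=g_{t,\e}\circ S_{t,\e}.$$
Writing $\pa_{t}S_{t,\e}=J\nabla K_{t,\e}\circ S_{t,\e}$ for a family of time-dependent Hamiltonians $K_{t,\e}$, differentiating the conjugacy relation in $t$ yields, at $t$ fixed, a linear cohomological equation for the Hamiltonian vector field $Z_{t,\e}=J\nabla K_{t,\e}$ over the dynamics of $g_{t,\e}$, with right-hand side $g_{2,\e}-g_{1,\e}=O^{N+1}(x,y)$.

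Next, solve this cohomological equation. Since $g_{t,\e}$ is hyperbolic at $o$ with stable/unstable directions close to the coordinate axes, the linearized equation $Z_{t,\e}\circ g_{t,\e}-Dg_{t,\e}\cdot Z_{t,\e}=g_{2,\e}-g_{1,\e}$ admits, in a fixed small neighborhood $V$ of $o$ (independent of $\e$), a unique solution given by a telescoping series along forward/backward orbits, with convergence ensured by hyperbolicity: the component along the unstable direction is summed by iterating $g_{t,\e}^{-1}$, the stable component by iterating $g_{t,\e}$, and the $O^{N+1}$-flatness of the right-hand side compensates the exponential distortion $\l^n$ of the derivative bounds for iterates of $g_{t,\e}$. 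A standard counting argument expressing each $C^k$-norm of the telescoped series in terms of the $C^m$-norm of the data and the $N$-flatness of the right-hand side shows that the solution $Z_{t,\e}$ is of class $C^k$ with $k=[NA_{1}-B_{1}]-1$, the constants $A_{1},B_{1}$ depending only on $\l$.

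Finally, integrate the non-autonomous flow $\pa_{t}S_{t,\e}=J\nabla K_{t,\e}\circ S_{t,\e}$ from $t=0$, $S_{0,\e}=id$, up to $t=1$, and set $S^{(1)}_{\e}:=S_{1,\e}$. The $N+1$-flatness of $Z_{t,\e}$ at $o$ guarantees $S^{(1)}_{\e}(o)=o$ and $DS^{(1)}_{\e}(o)=id$; the Hamiltonian nature of the flow guarantees that $S^{(1)}_{\e}$ is symplectic; and when $\e=0$, the right-hand side vanishes identically, so $Z_{t,0}\equiv 0$ and $S^{(1)}_{0}=id$. Continuity of $\e\mapsto S^{(1)}_{\e}$ in the appropriate $C^k$-topology follows from the continuity of $\e\mapsto(g_{1,\e},g_{2,\e})$, the linearity of the telescoping procedure, and uniqueness of the solution to the cohomological equation in the chosen function space.

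The main obstacle is the bookkeeping of the loss of derivatives in step 2: one must track carefully how many derivatives of the data survive the telescoping construction, which involves differentiating compositions $g_{t,\e}^n$ whose $C^k$-norms grow like $\l^{nk}$, and balancing this growth against the $O^{N+1}$-flatness that provides a gain of order $\l^{-n(N+1)}$ per iterate. This is exactly the mechanism producing the affine relation $k=[NA_{1}-B_{1}]-1$ between $N$ and $k$. The parametric aspect adds no new difficulty beyond verifying that all constants in these estimates can be chosen uniformly in $\e$ over a small interval $I\ni 0$, which follows from the continuity hypothesis \ref{assumpg12} and from the fact that hyperbolicity of $g_{t,\e}$ at $o$ is an open condition.
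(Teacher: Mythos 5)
The paper does not actually prove this proposition: it is stated as a parametric version of Theorem~1.1 of \cite{BLW}, to be ``checked after close examination of the proof'' there, and your plan is precisely that proof --- the deformation (homotopy) method, reduction to a twisted cohomological equation over the hyperbolic germ, solution by telescoping sums along the stable and unstable directions with the $O^{N+1}$-flatness of the right-hand side beating the $\l^{n}$ growth of the iterates, and the resulting affine loss of derivatives $k=[NA_{1}-B_{1}]-1$. So your approach is essentially the intended one, and the parametric statement indeed only requires uniformity of the constants in $\e$, which your last paragraph correctly identifies.

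One point deserves more care than your sketch gives it: the linear interpolation $g_{t,\e}=g_{2,\e}+t(g_{1,\e}-g_{2,\e})$ is not symplectic, and, more importantly, solving the vector-valued equation $Z\circ g-Dg\cdot Z=\eta$ componentwise does not by itself produce a vector field of the form $J\nabla K$, which you need for $S_{t,\e}$ to be symplectic. This is exactly why \cite{BLW} formulate everything as \emph{scalar} cohomology equations: one interpolates at the level of generating functions (so the path stays symplectic) and solves the untwisted scalar equation $K\circ g-K=R$ for the Hamiltonian $K_{t,\e}$, where $R$ is a primitive of the closed $1$-form attached to $\pa_{t}g_{t,\e}$; the Hamiltonian character of $Z_{t,\e}=J\nabla K_{t,\e}$ is then automatic. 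With that adjustment your plan is a faithful reconstruction of the argument the paper relies on.
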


\subsection{Proof of Proposition \ref{prop:2.5} }\label{sec:2.4}
It will be a consequence of Corollary \ref{cor:2.3} and Proposition \ref{prop:2.7}.

\bigskip 
We  first choose $N$ so that $k=[NA_{1}-B_{1}]-1$ and we define $m$ by $N=[(m+B)/A]+1$. If $k$ is large enough $m$ will satisfy the assumption of Corollary \ref{cor:2.3}. We then apply Proposition \ref{prop:2.7} to $g_{1,\e}=f_{\e}^{(1)}$, $g_{2,\e}=\phi^1_{J\nabla Q_{\e,N}}$ which satisfy (\ref{assumpg12}) (note that (\ref{2.15*}) is satisfied). This provides us with a continuous  family $(S_{\e}^{(1)})_{\e}$ of $C^k$ symplectic diffeomorphisms defined in a fixed neighborhood of $o$  such that  $S^{(1)}_{\e}(o)=o$, $DS^{(1)}_{\e}(o)=id$ and on a neighborhood of $o$
\be \begin{cases}&S^{(1)}_{\e}\circ f_{\e}^{(1)}\circ (S^{(1)}_{\e})^{-1}=\phi^1_{J\nabla Q_{\e,N}}\\
&S^{(1)}_{0}=id.\end{cases}\label{2.21*}\ee
We can extend these $S^{(1)}_{\e}$ as symplectic $C^k$ diffeomorphisms $S^{(2)}_{\e}$ of $\R^2$ which depend continuously on $\e$ ({\it cf.} Lemma \ref{lemma:extension}). We then define 
$$\Theta_{\e,k}=S_{\e}^{(2)}\circ R_{\e}$$
and we observe that on a neighborhood of $o$
$$\begin{cases}
&\Theta_{\e,k}\circ f_{\e}\circ \Theta_{\e,k}^{-1}=\phi^1_{J\nabla Q_{\e,N}}\\
&(\Theta_{\0,k})_{*}X_{0}=J\nabla Q_{0,N};
\end{cases}$$
indeed, the first equality comes from  (\ref{2.14*} and the first equation of (\ref{2.21*}), while the second is a consequence of (\ref{2.15**}) and the second equation of (\ref{2.21*}).

To conclude the proof we rename $q_{\e,N}$, $Q_{\e,N}$ as $q_{\e,k}$, $Q_{\e,k}$.

\hfill$\Box$

\vskip 1cm\noindent{\bf Note:} From now on, and till the end of Section \ref{sec:8}, we shall work in the setting   of Theorem \ref{theo:2.1} with a family of $C^k$ symplectic diffeomorphisms satisfying conditions \ref{iii0}, \ref{iii1},  \ref{iii2}, \ref{iii3}.

\section{Dynamics in a neighborhood of the origin}\label{sec:3}
The purpose of this Section is to estimate  the time spent by the orbits of the  flow $\Phi^t_{J\nabla Q_{\e}}$ in the neighborhood $V_{}$  of the hyperbolic point $o$. 

\smallskip To do that, we perform one more   change of coordinates.

\medskip
 Let us define the following diffeomorphisms $\Xi_{1},\Xi_{2}$
\be\begin{cases}& \forall\ (x,y)\in\ \R^*_{+}\times\R,\quad  \Xi_{1}(x,y)=(\ln x,xy),\\
&\forall\ (x,y)\in\  \R\times\R^*_{+}, \quad \Xi_{2}(x,y)=(-\ln y,xy).
\end{cases}\label{defXi}
\ee
Since  $d(\ln x)\wedge d(xy)= d(-\ln y)\wedge d(xy)=dx\wedge dy$ we see that $\Xi_{i}$, $i=1,2$, are symplectic.

Let $I_{1},I_{2}\subset \R^*_{+}$ be
 some open intervals such that  
$I_{1}\times\{0\}$ and $\{0\}\times I_{2}$ are both contained in $V_{}$.

\begin{lem}\label{prop:linearization}
Let $(x_{*},y_{*})\in (I_{1}\times\R)\cap  V_{}$ 
and $\bar t_{I_{2}}(x_{*},y_{*})=\inf\{t>0:\phi_{J\nabla Q_{\e}}^t(x_{*},y_{*})\in (\R\times I_{2})\cap  V_{}\}$. Then: \begin{enumerate}\item  There exists $c(I_{1},I_{2})\geq 1$ such that  if $0<x_{*}y_{*}\lesssim_{I_{1},I_{2},\l} 1$ one has 
\be c(I_{1},I_{2})^{-1} \frac{|\ln (x_{*}y_{*})|}{\l}\leq \bar t_{I_{2}}(x_{*},y_{*})\leq c(I_{1},I_{2}) \frac{|\ln (x_{*}y_{*})|}{\l}. \label{esttstarbis}
\ee
\item   For any $(x,y)$ in a neighborhood of $(x_{*},y_{*})$ and any  $t$ in a neighborhood of $\bar t_{I_{2}}(x_{*},y_{*})$
\be \Xi_{2}\circ\phi_{J\nabla Q_{\e}}^{t}\circ \Xi_{1}^{-1}:(u,v)\mapsto (u+\tau_{\e}^t(v),v)\label{eq:3.10}\ee
with 
\be  \tau_{\e}^t(v)=tq_{\e}'(v)-\ln v.\label{eq:3.11bis}
\ee
\end{enumerate}

\end{lem}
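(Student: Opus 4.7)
The crucial structural observation is that, by hypothesis \ref{iii2}, the Hamiltonian $Q_\e(x,y)=q_\e(xy)$ depends only on the product $xy$, so $xy$ is a first integral of the flow of $J\nabla Q_\e$ on $V$. Hence the orbit through $(x_*,y_*)$ stays on the hyperbola $\{xy=v\}$ where $v:=x_*y_*$, and along this orbit $q_\e'(xy)$ is frozen to the constant value $q_\e'(v)$. The ODE
\[
\dot x=-q_\e'(xy)\,x,\qquad \dot y=q_\e'(xy)\,y
\]
thus decouples into two linear equations, integrating to
\[
\phi_{J\nabla Q_\e}^t(x_*,y_*)\ =\ \bigl(x_*\,e^{-q_\e'(v)\,t},\ y_*\,e^{q_\e'(v)\,t}\bigr),
\]
valid for as long as the orbit remains in $V$. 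Before using this, I would verify that the arc of hyperbola between $\Xi_1$-side and $\Xi_2$-side indeed lies inside $V$ for $|v|$ small enough: since $V$ is open and contains $I_1\times\{0\}$ and $\{0\}\times I_2$, this follows by compactness/continuity as $v\to 0$.

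For (1), the condition $\phi^{\bar t}(x_*,y_*)\in(\R\times I_2)\cap V$ amounts to $y_*\,e^{q_\e'(v)\bar t}$ first entering $I_2$ (from below, since $y_*=v/x_*$ is small). Solving,
\[
\bar t_{I_2}(x_*,y_*)\ =\ \frac{1}{q_\e'(v)}\bigl(\ln y(\bar t)-\ln y_*\bigr)\ =\ \frac{1}{q_\e'(v)}\bigl(\ln y(\bar t)+\ln x_*-\ln v\bigr).
\]
Since $q_\e'(v)=\la+O(v)$ is pinched between two positive constants for $|v|$ small, while $\ln x_*$ (with $x_*\in I_1$) and $\ln y(\bar t)$ (with $y(\bar t)\in I_2$) remain bounded by quantities depending only on $I_1,I_2$, the $|\ln v|/\la$ term dominates; absorbing the bounded remainders into a single constant $c(I_1,I_2)\ge 1$ yields the two-sided estimate \eqref{esttstarbis}.

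For (2), the computation is direct. From $\Xi_1^{-1}(u,v)=(e^u,\,v\,e^{-u})$ and the conservation of $xy=v$ one gets
\[
\phi_{J\nabla Q_\e}^{\,t}\circ\Xi_1^{-1}(u,v)\ =\ \bigl(e^{u-q_\e'(v)t},\ v\,e^{-(u-q_\e'(v)t)}\bigr),
\]
and applying $\Xi_2(x,y)=(-\ln y,\,xy)$ yields a map of the form $(u,v)\mapsto(u+\tau_\e^t(v),\,v)$ whose first coordinate is an affine function of $t$ (of slope $\pm q_\e'(v)$) plus the boundary term $-\ln v$, giving exactly \eqref{eq:3.10}--\eqref{eq:3.11bis}.

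\textbf{Main difficulty.} There is no serious obstacle: the lemma reduces to integrating a one-dimensional linear ODE once one uses that $xy$ is a first integral. The only genuine point of care is the a~priori control that the orbit remains within the normal-form neighborhood $V$ during the whole transit, which is precisely what forces the smallness condition $x_*y_*\lesssim_{I_1,I_2,\la}1$ in the statement.
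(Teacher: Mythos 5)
Your proof is correct and follows essentially the same route as the paper's: you integrate the flow explicitly using that $xy$ is a first integral, solve for the transit time to get the two-sided estimate in (1), and compute $\Xi_2\circ\phi^t\circ\Xi_1^{-1}$ directly in the $(u,v)$ coordinates for (2). The only cosmetic difference is that the paper conjugates the flow by $\Xi_1$ first (obtaining the translation flow of $\ti Q_\e(u,v)=q_\e(v)$) rather than pushing the explicit solution through $\Xi_1^{-1}$, which amounts to the same computation.
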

\begin{proof} 
1) We evaluate $\bar t_{I_{2}}(x_{*},y_{*})$. Since 
$$\phi_{J\nabla Q_{\e}}^t(x_{*},y_{*})=(e^{-tq'_{\e}(x_{*}y_{*})}x_{*},e^{tq'_{\e}(x_{*}y_{*})}y_{*})$$
we   have 
$e^{tq_{\e}'(x_{*}y_{*})}y_{*}\in I_{2}$ if and only if  
$$t\in \biggl]\frac{\ln((x_{*}y_{*})^{-1} \times x_{*}\min I_{2})}{q'_{\e}(x_{*}y_{*})},\frac{\ln((x_{*}y_{*})^{-1}\times x_{*}\max I_{2})}{q'_{\e}(x_{*}y_{*})}\biggr[.
$$
 Hence for $x_{*}y_{*}$ small enough
$$\biggl|\bar t_{I_{2}}(x_{*},y_{*})-\frac{|\ln(x_{*}y_{*})|}{q'_{\e}(x_{*}y_{*})}\biggr|\leq \frac{\max(|\ln(x_{*}\min I_{2}  )|  ,|\ln(x_{*}\max I_{2}  )|) }{q'_{\e}(x_{*}y_{*})}.$$
Since for $0<x_{*}y_{*}\lesssim 1$ one has $q'_{\e}(x_{*}y_{*})\asymp \l$, there exists $c(I_{1},I_{2})$ such that  if $x_{*}y_{*}$ small enough (how small depending on $I_{1},I_{2},\l$) the inequality (\ref{esttstarbis}) is satisfied.

\medskip\noindent
2) We write
$$\Xi_{2}\circ\phi_{J\nabla Q_{\e}}^t\circ \Xi_{1}^{-1}=\Xi_{2}\circ\Xi_{1}^{-1}\circ \Xi_{1}\circ\phi_{J\nabla Q_{\e}}^t\circ \Xi_{1}^{-1} =\Xi_{2}\circ\Xi_{1}^{-1}\circ  \phi^t_{J\nabla\ti Q_{\e}}
$$
with $\ti Q_{\e}(u,v)=(Q_{\e}\circ\Xi_{1}^{-1})(u,v)=q_{\e}(v)$. Since $\phi^t_{J\nabla\ti Q_{\e}}(u,v)=(u-tq_{\e}'(v),v)$ and $\Xi_{2}\circ\Xi_{1}^{-1}(u,v)=(u-\ln v,v)$ we get (\ref{eq:3.11bis}).

\end{proof}

\section{Fundamental domains and first return maps}\label{sec:4}
We construct in this Section  adapted fundamental domains $\cF_{\e,y_{*}}$ for the maps $(f_{\e})_{\e}$ satisfying conditions \ref{iii0}, \ref{iii1},  \ref{iii2}, \ref{iii3} of Theorem  \ref{theo:2.1} and define their first return maps $\hat f_{\e}$ in $\cF_{\e,y_{*}}$. 

\subsection{Fundamental domains}\label{sec:4.1}
Let  $V_{}$ be the domain of Theorem  \ref{theo:2.1}. One can choose $x_{*}>0$   such that for any $\e$ small enough
$$(x_{*},0)\in V_{}\quad \textrm{and}\quad f_{\e}^{-1}(x_{*},0)\notin V_{}.$$ 
For $y_{*}>0$  small enough, we define the   vertical segment 
$$L_{x_{*},y_{*}}:=\{(x_{*},ty_{*}),\ 0<t<1\}$$ and the  domain $$\cF_{\e,x_{*},y_{*}}$$ as the interior of the contour defined by (see Figure \ref{fig:2})
\begin{description} \item[(a)] the segment $[f_{\e}(x_{*},0),(x_{*},0)]$, 
\item[(b)] the transversal  $\overline{L_{x_{*},y_{*}}}$
\item[(c)] the piece of hyperbola joining  $(x_{*},y_{*})$ to $f_{\e}(x_{*},y_{*})$ ({\it cf.} Remark \ref{rem:2.1}) 
\item[(d)] the curve  $f_{\e}(\overline{L_{x_{*},y_{*}}})$.
\end{description}
We  shall often drop the index $x_{*}$  in the notations of $L_{x_{*},y_{*}}$, $\cF_{\e,x_{*},y_{*}}$ and simply set
$$L_{y_{*}}:=L_{x_{*},y_{*}}\qquad \textrm{and}\qquad \cF_{\e,y_{*}}=\cF_{\e,x_{*},y_{*}}.$$
 If $y_{*}$ is small enough  one has $\cF_{\e,y_{*}},\overline{L_{y_{*}}}\subset V_{}$.
We set 
$$\ti \cF_{\e,y_{*}}=\cF_{\e,y_{*}}\cup L_{y_{*}}.$$
\subsection{First return maps}\label{sec:4.2}
Our aim in this subsection is to define  the first return map of $f_{\e}$ in $\ti \cF_{\e,y_{*}}$.

Since $\Sigma_{\e}$ is a separatrix for $f_{\e}$ we can define (see Remark \ref{rem:2.2})
$$N_{}(\e)\mathop{=}_{defin.}\min\{n\in\N^*,f_{\e}^{-n}(]f_{\e}(x_{*}),x_{*}])\subset V_{}\}.$$
We note that if $\e$ is small enough $N_{}(\e)$ is independent of $\e$, so we shall denote it by $N_{}$. Moreover, if
 $\e$ and $y_{*}$ are small enough
\be N_{}\mathop{=}_{defin.}\min\{n\in\N^*,f_{\e}^{-n}(\ti \cF_{\e,y_{*}})\subset V_{}\}.\label{defN}\ee
\begin{lemma}There exists a constant $0<c_{*}<1$ such that for $(x,y)\in\cF_{\e,c_{*}y_{*}}$
\be \ti n_{\e}(x,y):\mathop{=}_{defin.}\min\{j\in\N^*, f_{\e}^j(x,y)\in f_{\e}^{-N_{}}(\ti\cF_{\e,y_{*}})\}<\infty. \label{tildenante*}\ee
One has 
\be \ti n_{\e}(x,y)\asymp \ln(xy)/\l.\label{tildenante}\ee
\end{lemma}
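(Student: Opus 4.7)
The plan is to exploit hypothesis \ref{iii2}: on $V$ the map $f_\e$ coincides with the time-$1$ Hamiltonian flow $\phi^1_{J\nabla Q_\e}$, so as long as the orbit segment $(x,y), f_\e(x,y), \ldots, f_\e^m(x,y)$ is contained in $V$, each iterate is given explicitly by $f_\e^j(x,y)=(xe^{-jq'_\e(xy)},\, ye^{jq'_\e(xy)})$, lying on the hyperbola $\{xy=c\}$ with $c=xy$. Since $q'_\e(c)=\l+O(c)$, for $c$ small one has $q'_\e(c)\asymp\l$, and $y$ expands by a factor $e^{jq'_\e(c)}$ under $f_\e^j$ while $x$ contracts by the same factor.

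The first step is to locate the target set $f_\e^{-N}(\tilde\cF_{\e,y_*})$ inside $V$. By the minimality condition (\ref{defN}) defining $N$, one has $f_\e^{-N}(\tilde\cF_{\e,y_*})\subset V$, while the intermediate preimages $f_\e^{-j}(\tilde\cF_{\e,y_*})$, $1\leq j\leq N-1$, leave $V$ and follow the separatrix loop. Since $\tilde\cF_{\e,y_*}$ abuts the stable branch $\R\times\{0\}\cap V$ and the non-split separatrix $\Sigma_\e$ closes the loop by re-entering $V$ along $\{0\}\times\R$, the set $f_\e^{-N}(\tilde\cF_{\e,y_*})$ sits near the unstable axis; concretely it lies in a strip $\R\times I_2$ for some fixed open interval $I_2\subset\R^*_+$ whose position depends only on $V, x_*, y_*$ and not on $\e$. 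Reaching $f_\e^{-N}(\tilde\cF_{\e,y_*})$ thus amounts to the orbit carrying the initial $y$-coordinate, of order $xy/x_*\asymp xy$, up to order $\asymp y_*$.

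The estimate (\ref{tildenante}) then follows directly from Lemma \ref{prop:linearization}(1) applied with $I_1\ni x_*$ (from the construction of $\cF_{\e,y_*}$) and $I_2$ as just described. Indeed the continuous transit time is $\bar t_{I_2}(x,y)\asymp|\ln(xy)|/\l$, and because $f_\e=\phi^1_{J\nabla Q_\e}$ on $V$, the discrete hitting time $\tilde n_\e(x,y)$ equals $\lfloor\bar t_{I_2}(x,y)\rfloor$ up to an additive error of order $N$ (which is bounded, independent of $\e$), yielding both finiteness of $\tilde n_\e$ and the bound $\tilde n_\e\asymp|\ln(xy)|/\l$. The role of the constant $c_*\in(0,1)$, and the main technical point to verify, is to ensure that the whole hyperbolic arc $\{\xi\eta=xy,\ 0<\eta\leq\max I_2\}$ remains inside $V$: if $xy\leq x_* c_* y_*$ is small enough this arc stays below the top of $V$, so the flow-to-map identification $f_\e^j=\phi^j_{J\nabla Q_\e}$ is valid along the entire orbit segment from $(x,y)$ to $f_\e^{\tilde n_\e(x,y)}(x,y)$ and Lemma \ref{prop:linearization} can be applied throughout.
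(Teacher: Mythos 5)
Your overall strategy is the same as the paper's: identify the orbit inside $V$ with the time-$1$ flow of $J\nabla Q_{\e}$ along the hyperbola $\{x'y'=xy\}$, locate the target $f_{\e}^{-N}(\ti\cF_{\e,y_{*}})$ in a strip $\R\times I_{2}$ near the unstable axis, and conclude via Lemma \ref{prop:linearization}(1) that the hitting time is comparable to $|\ln(xy)|/\l$. The asymptotic part (\ref{tildenante}) is therefore handled exactly as in the paper (which uses $|\bar t_{I_{2}}(x,y)-\ti n_{\e}(x,y)|\le 1$; your additive error $O(N)$ is equally good).

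The one step you assert without justification is precisely the finiteness claim (\ref{tildenante*}): knowing that the continuous orbit enters the strip $\R\times I_{2}$ at time $\bar t_{I_{2}}$ does not by itself show that some \emph{integer} iterate lands in the smaller set $f_{\e}^{-N}(\ti\cF_{\e,y_{*}})$ --- a priori the discrete orbit could jump over it. The sentence ``the discrete hitting time equals $\lfloor\bar t_{I_{2}}\rfloor$ up to a bounded error, yielding finiteness'' presupposes what is to be proved. The missing ingredient, which is the content of the first half of the paper's proof, is that $\ti\cF_{\e,y_{*}}$ (hence $f_{\e}^{-N}(\ti\cF_{\e,y_{*}})$) is a fundamental domain whose intersection with each hyperbola $\cH_{x,y}$ is a half-open arc $\cH_{x,y}(p,f_{\e}^{-1}(p))$ joining consecutive preimages of $L_{y_{*}}$; the backward iterates $f_{\e}^{-j}$ of this arc then tile the semi-branch of the hyperbola containing $(x,y)$, so $(x,y)$ lies in exactly one tile and the corresponding forward iterate lands in the target. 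With that one observation added, your argument is complete and coincides with the paper's.
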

\begin{proof}Note that the domain $f_{\e}^{-N_{}}(\cF_{\e,y_{*}})\subset \ti V_{}$ is the interior of the contour defined by:
 \begin{description} \item[(a)]  the segment $[f_{\e}^{-(N_{}-1)}(x_{*},0),f_{\e}^{-N_{}}(x_{*},0)]\subset W^{u}_{f_{\e}}(o)\cap V_{}\subset \{0\}\times \R$
\item[(b)] the curve  $f_{\e}^{-N_{}}(\overline{L_{y_{*}}})$
\item[(c)] a curve joining  $f_{\e}^{-N_{}}(x_{*},y_{*})$ to $f_{\e}^{-(N_{}-1)}(x_{*},y_{*})$ 
\item[(d)] the curve $f_{\e}^{-(N_{}-1)}(\overline{L_{y_{*}}})$
\end{description} 
and
$$f_{\e}^{-N_{}}(\ti \cF_{\e,y_{*}})=f_{\e}^{-N_{}}(\cF_{\e,y_{*}})\cup f_{\e}^{-N_{}}(L_{y_{*}}).
$$
  Note that the lines $f_{\e}^{-N_{}}(\overline{L_{y_{*}}})$, $f_{\e}^{-(N_{}-1)}(\overline{L_{y_{*}}})$ are transversal to the segment $[f_{\e}^{-(N_{}-1)}(x_{*},0),f_{\e}^{-N_{}}(x_{*},0)]$. 
  
  Now let $(x,y)\in \ti \cF_{\e,y_{*}}$.  We denote by $\cH_{x,y}$ the hyperbola  $$\cH_{x,y}:=\{ (x',y'),\  x'y'=xy\}$$ and if $z,z'\in\cH_{x,y}$, by $\cH_{x,y}(z,z')$ the arc of hyperbola of $\cH_{x,y}$ between $z$ and $z'$ which is open in $z$ and closed in $z'$. If $y>0$ is small enough, $\cH_{x,y}$  intersects $f_{\e}^{-N_{}}(\ti \cF_{\e,y_{*}})\subset V_{}$ in an arc of hyperbola  of the form $\cH_{x,y}(p,f_{\e}^{-1}(p))$ with  $p\in f_{\e}^{-(N_{}-1)}({L_{y_{*}}})$ and $f_{\e}^{-1}(p)\in  f_{\e}^{-N_{}}({L_{y_{*}}})$. The sets $f^{-j}_{\e}(\cH_{x,y}(p,f_{\e}^{-1}(p))$, $j\geq 0$ form a partition of the semi-arc of parabola $\bigcup_{n\geq 0}\cH_{x,y}(p,f_{\e,k}^{-n}(p)$ which contains $(x,y)$. In particular, there exists $j\geq 0$ (in fact $j\geq 1$) such that  $(x,y)\in f^{-j}_{\e}(\cH_{x,y}(p,f_{\e}^{-1}(p))$ or equivalently 
$$f_{\e}^j(x,y)\in \cH_{x,y}(p,f_{\e}^{-1}(p))\subset f^{-N_{}}_{\e}(\ti \cF_{\e,y_{*}}).$$
This proves (\ref{tildenante*}).

\medskip
To prove (\ref{tildenante}) we note that there exists an interval $I_{2}$ not containing 0 and depending only on $x_{*},y_{*}$ such that $f_{\e}^{-N_{}}(\ti \cF_{\e,y_{*}})\subset \R\times I_{2}$. We then use Lemma \ref{prop:linearization} and the fact that $|\bar t_{I_{2}}(x,y)-\ti n_{\e}(x,y)|\leq 1$.
\end{proof}
We now define
\be n_{\e}=N_{}+\ti n_{\e}.\label{defneps}\ee
By (\ref{tildenante}) one has 
\be n_{\e}(x,y)\asymp \ln(xy)/\l.\label{tilden}\ee
The map  
$\hat f_{\e}:\ti \cF_{\e,c_{*}y_{*}}\to \ti \cF_{\e,y_{*}}$  defined by 
\be \hat f_{\e}=f_{\e}^{n_{\e}}\label{deffhat}\ee
 is the first return map of $f_{\e}$ in $\ti  \cF_{\e,y_{*}}$ (for points starting in $ \ti \cF_{\e,c_{*}y_{*}}$). Note that $\hat f_{\e}$ is {\it not} $C^k$ on the whole domain $\ti \cF_{\e,c_{*}y_{*}}$.

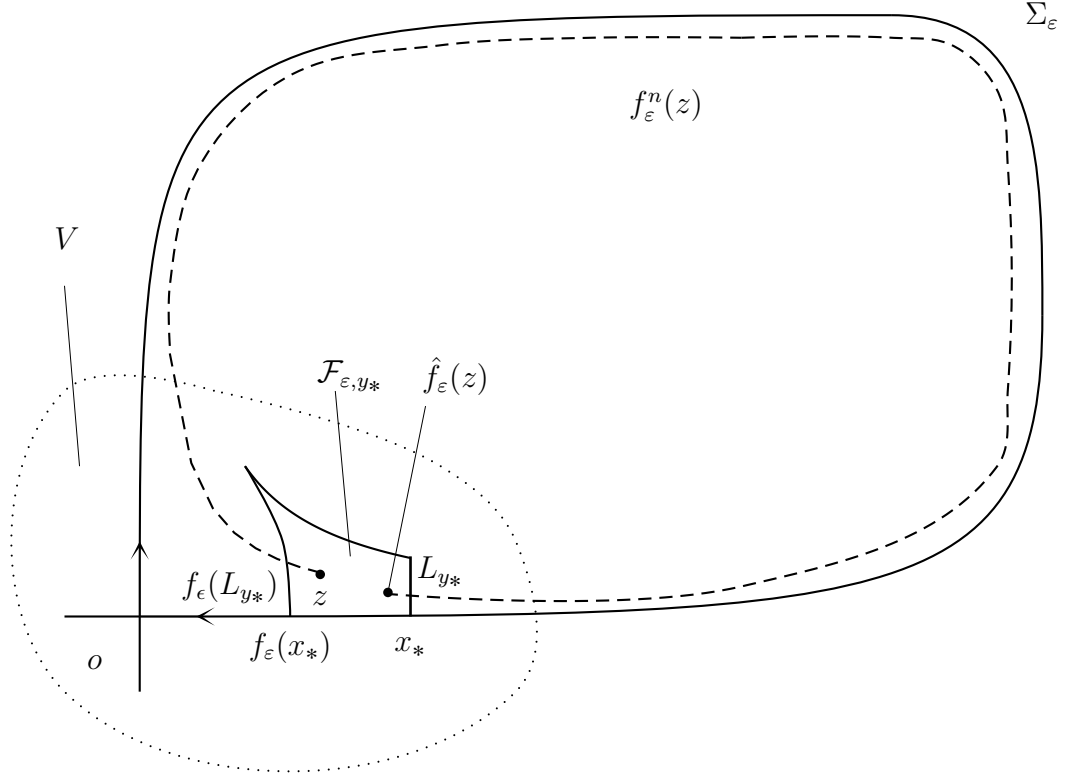
\begin{figure}
\begin{center}
\begin{pspicture}(-1.5,-1.7)(11,10)
\psset{xunit=2cm,yunit=2cm}
\def\Hypa{.7 x div}
\def\Hypb{.7 x div 0.5 mul}
\psplot{.7}{1.8}{\Hypa}
\pscurve[linestyle=dashed](0.2,1.8)(0.2,2.2)(0.4,3)(1,3.6)(2,3.8)(3,3.85)(4,3.85)
\pscurve[linestyle=dashed](4,3.85)(5,3.85)(5.4,3.8)(5.7,3.5)(5.77,3)(5.77,1.5)(5.7,1)(4,.2)(1.65,0.15)
\pscurve[linestyle=dotted](0.5,1.5)(-0.5,1.5)(-0.5,-0.5)(2.5,-0.5)(2.5,0.5)(2,1)(0.5,1.5)
\rput(6,4){$\Sigma_{\e}$}
\rput(3.5,3.4){$f_{\e}^n(z)$}
\rput(-0.5,2.5){$V_{}$}
\psline[linewidth=0.05pt](-0.5, 2.2)(-0.4,1)
\rput(1.4,1.6){$\cF_{\e,y_{*}}$}
\psline[linewidth=0.05pt](1.3,1.5)(1.4,0.4)
\psplot[linestyle=dashed,plotpoints=8,dotsize=2pt]{.2}{1.2}{\Hypb}
\psdot(1.2,0.28)
\rput(1.2,0.12){$z$}

\rput(1.8,-0.2){$x_{*}$}
\rput(1,-0.2){$f_{\e}(x_{*})$}

\psdot(1.65,0.16)
\rput(2.1,1.6){$\hat f_{\e}(z)$}
\psline[linewidth=0.2pt](1.9,1.4)(1.65,0.16)

\psbezier(0,0)(0,4)(0,4)(5,4)
\psbezier(0,0)(5,0)(6,0)(6,2)
\psbezier(6,2)(6,3)(6,4)(5,4)

\psline[linewidth=0.5pt, arrowsize=2pt 10, arrowlength=1,arrowinset=0.7]{>-}(0.5,0)(0,0)
\psline(-0.5,0)(0,0)
\psline[linewidth=0.5pt, arrowsize=2pt 10, arrowlength=1,arrowinset=0.7]{->}(0,0)(0,0.5)
\psline[linewidth=1pt](1.8,0)(1.8,0.4)
\psline(0,-0.5)(0,0)
\pscurve(1,0)(.95,0.5)(.7,1)
\rput(2,0.3){$L_{y_{*}}$}
\rput(.6,.2){$f_{\epsilon}(L_{y_{*}})$}
\rput(-0.3,-0.3){$o$}
\end{pspicture}
\end{center}
\caption{Fundamental domain $\cF_{\e,y_{*}}$ for $f_{\e}$ and the first return map $\hat f_{\e}$.}\label{fig:2}
\end{figure}

\subsection{Estimates on first return maps}
We denote for $a\in\R$
$$T_{a}:(u,v)\mapsto (u+a,v)$$
and we recall the definition (\ref{defXi}) of the symplectic diffeomorphisms $\Xi_{1},\Xi_{2}$.

We observe that there exist open sets $W_{1}\subset\R^*_{+}\cap \R$, $W_{2}\subset \R\times\R_{+}^*$ such that for any  $\e$ and $y_{*}>0$  small enough 
$$\ti \cF_{\e,y_{*}}\subset W_{1}\subset V_{},\qquad f_{0}^{-N_{}}(\ti \cF_{\e,y_{*}})\subset W_{2}\subset V_{}.$$
\begin{lem}\label{lemma:4.3}There exists a $C^k$ function $\sigma_{0,N_{}}\in C^k(\R^*_{+},\R)$ such that on $\Xi_{2}(W_{2})$ one has 
\be \Xi_{1}\circ f_{0}^{N_{}}\circ \Xi_{2}^{-1}=T_{\sigma_{0,N_{}}}.\label{eq:4.6}
\ee
\end{lem}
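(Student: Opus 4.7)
The plan is to exploit three facts: (a) by condition \ref{iii3}, $f_{0}=\phi^1_{X_{0}}$ is the time-$1$ map of an \emph{autonomous} Hamiltonian flow, so $f_{0}^{N_{}}=\phi^{N_{}}_{X_{0}}$; (b) the Hamiltonian $H_{0}$ is a first integral of that flow; (c) $\Xi_{1}$, $\Xi_{2}$ and $f_{0}$ are all symplectic for the form $dx\wedge dy$, which pulls back to $du\wedge dv$ under each $\Xi_{i}$.

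First I would show that the coordinate $v=xy$ is preserved by $F:=\Xi_{1}\circ f_{0}^{N_{}}\circ\Xi_{2}^{-1}$. Pick $p\in W_{2}$ with $p':=f_{0}^{N_{}}(p)\in W_{1}$; both $W_{1},W_{2}\subset V$, and on $V$ one has $H_{0}=q_{0}\circ\pi$ with $\pi(x,y)=xy$ (conditions \ref{iii2}, \ref{iii3}). Conservation of $H_{0}$ along the full orbit then yields $q_{0}(\pi(p))=q_{0}(\pi(p'))$. Since $q_{0}'(0)=\l>0$, $q_{0}$ is a local diffeomorphism at $0$; shrinking $W_{1},W_{2}$ if needed so that all relevant values of $\pi$ lie in the interval of injectivity of $q_{0}$, we conclude $\pi(p)=\pi(p')$. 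In $(u,v)$-coordinates this means the second component of $F$ is exactly $v$.

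Next, since $F$ is symplectic for $du\wedge dv$, writing $F(u,v)=(a(u,v),v)$ and imposing $\det DF=1$ forces $\pa a/\pa u=1$, so $a(u,v)=u+\sigma_{0,N_{}}(v)$ for some function $\sigma_{0,N_{}}$ of $v$ alone. This gives (\ref{eq:4.6}) on the relevant subset of $\Xi_{2}(W_{2})$. The $C^k$-regularity of $\sigma_{0,N_{}}$ on the corresponding interval of $v$-values follows from that of $f_{0}$, $\Xi_{1}$, $\Xi_{2}$; one then extends $\sigma_{0,N_{}}$ to all of $\R^*_{+}$ in any $C^k$ fashion, since the formula only has to hold on $\Xi_{2}(W_{2})$.

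The main subtlety I anticipate is book-keeping about where the orbit lives: intermediate iterates $f_{0}^j(p)$ for $0<j<N_{}$ will generally leave $V$, since the orbit must travel all the way around the separatrix, but conservation of $H_{0}$ is a global property of the autonomous flow $\phi^t_{X_{0}}$, so the argument above only requires the two endpoints $p$ and $f_{0}^{N_{}}(p)$ to lie in $V$, which is built into the definitions of $W_{1}$ and $W_{2}$. No truly hard obstacle remains.
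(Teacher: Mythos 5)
Your proposal is correct and follows essentially the same route as the paper: conservation of the autonomous Hamiltonian $H_{0}$ (which equals $q_{0}(xy)$ on $V$) forces the $v$-coordinate to be preserved, and symplecticity then forces the $u$-component to be a translation by a function of $v$ alone, which is finally extended in $C^k$ to all of $\R^*_{+}$. Your explicit remark that the intermediate iterates may leave $V$ but that conservation of $H_{0}$ is global is a point the paper leaves implicit, and it is handled correctly.
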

\begin{proof} From condition  \ref{iii3} one can write on $\R^2$
$$f_{0}=\phi^1_{J\nabla H_{0}}$$
hence 
$$f_{0}^{N_{}}=\phi^{N_{}}_{J\nabla H_{0}}, \qquad\textrm{where}\  H_{0}\ |_{V_{}}=Q_{0}.$$
If $(u,v)\in \Sigma_{2}(W_{2})$ and $(\ti u,\ti v)=\Xi_{1}(f_{0}^{N_{}}(\Xi_{2}^{-1}(u,v)))$ one then  has
$$Q_{0}(\Xi_{1}^{-1}(\ti u,\ti v))=Q_{0}(f_{0}^{N_{}}(\Xi_{2}^{-1}(u,v)))=Q_{0}(\Xi_{2}^{-1}(u,v))$$
hence
$$q_{0}(\ti v)=q_{0}(v)$$
and thus $\ti v=v$. Since the map $(u,v)\mapsto (\ti u, \ti v)$ is symplectic, this forces $\ti u=u+\ti \s_{0,N_{}}(v)$ for some $C^k$ function $\ti\s_{0,N_{}}$;  this function  can be extended as a $C^k$ function $\s_{0,N_{}}:\R\to\R$.

\end{proof}

Recall the definition (\ref{deffhat}) of $\hat f_{\e}$.
\begin{lemma} There exists
   a continuous family $(\hat \eta_{\e})_{\e}$ of $C^k$ symplectic   diffeomorphisms defined on $\R^2$ and   a neighborhood $W$ of $ f_{\e}^{-1}(\ti \cF_{\e,y_{*}})\cup \ti \cF_{\e,y_{*}}\cup f_{\e}(\ti \cF_{\e,y_{*}})$ such that
 \be\begin{cases}&\lim_{\e\to 0} \|\hat \eta_{\e}-id\|_{k}=0\\
 &\hat\eta_{\e}(W\cap(\R\times\{0\})\subset \R\times\{0\}
\end{cases} \label{hateta}
\ee
 and on a neighborhood of $\ti\cF_{\e,c_{*}y_{*}}$ one has 
\be \Xi_{1}\circ  \hat f_{\e}\circ  \Xi_{1}^{-1}=\hat \eta_{\e}\circ T_{\hat l_{\e}}\label{eq:5.18}\ee  
where
\be\hat l_{\e}(v)=\sigma_{0,N_{}}(v)+\hat n_{\e}(u,v)q_{\e}'(v)-\ln v,\qquad \textrm{with}\quad \hat n_{\e}=\ti n_{\e}\circ \Xi_{1}^{-1}.\label{eq:5.18'}\ee
\end{lemma}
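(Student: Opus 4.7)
The strategy is to split the first return map as $\hat f_\e=f_\e^{N_{}}\circ f_\e^{\ti n_\e}$ (by (\ref{defneps})), observing that the factor $f_\e^{\ti n_\e}$ takes place entirely inside $V_{}$ and is therefore, by hypothesis \ref{iii2}, a piece of the explicit Hamiltonian flow $\phi^{\ti n_\e}_{J\nabla Q_\e}$, while the $N_{}$-step factor $f_\e^{N_{}}$ is the excursion outside $V_{}$, which at $\e=0$ is exactly described by Lemma \ref{lemma:4.3}. Inserting $\Xi_2^{-1}\circ\Xi_2$ between the two factors gives
$$\Xi_1\circ\hat f_\e\circ\Xi_1^{-1}=\bigl(\Xi_1\circ f_\e^{N_{}}\circ\Xi_2^{-1}\bigr)\circ\bigl(\Xi_2\circ f_\e^{\ti n_\e}\circ\Xi_1^{-1}\bigr),$$
and I will analyze the two factors separately. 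For the right-hand factor, Lemma \ref{prop:linearization} applied with $t=\ti n_\e$ yields, on each connected piece of $\ti\cF_{\e,c_{*}y_{*}}$ on which $\ti n_\e$ is constant,
$$\Xi_2\circ f_\e^{\ti n_\e}\circ\Xi_1^{-1}(u,v)=\bigl(u+\hat n_\e(u,v)q_\e'(v)-\ln v,\,v\bigr)=T_{\mu_\e}(u,v),$$
with $\mu_\e(u,v):=\hat n_\e(u,v)q_\e'(v)-\ln v$.

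For the left-hand factor, Lemma \ref{lemma:4.3} gives at $\e=0$ the identity $\Xi_1\circ f_0^{N_{}}\circ\Xi_2^{-1}=T_{\sigma_{0,N_{}}}$ on $\Xi_2(W_2)$. This naturally suggests defining the "error" diffeomorphism
$$\hat\eta_\e:=\bigl(\Xi_1\circ f_\e^{N_{}}\circ\Xi_2^{-1}\bigr)\circ T_{-\sigma_{0,N_{}}},$$
so that $\hat\eta_0=\id$ on the natural domain $\Xi_2(W_2)$. Since $\Xi_1,\Xi_2$ are (real-analytic) symplectic, and by hypothesis \ref{iii1} the family $\e\mapsto f_\e$ is continuous in $C^k$, the maps $\hat\eta_\e$ depend continuously on $\e$ in $C^k$ and $\|\hat\eta_\e-\id\|_k\to 0$ as $\e\to 0$; they are symplectic as compositions of symplectic maps. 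Composing the two pieces and using the elementary fact that horizontal twists act additively on the first coordinate while preserving the second ($T_a\circ T_b=T_{a+b}$), one obtains
$$\Xi_1\circ\hat f_\e\circ\Xi_1^{-1}=\hat\eta_\e\circ T_{\sigma_{0,N_{}}}\circ T_{\mu_\e}=\hat\eta_\e\circ T_{\hat l_\e},$$
which is precisely the stated formula with $\hat l_\e(u,v)=\sigma_{0,N_{}}(v)+\hat n_\e(u,v)q_\e'(v)-\ln v$.

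It remains to extend $\hat\eta_\e$ from its natural domain to a $C^k$ symplectic diffeomorphism of the whole of $\R^2$ (using the extension procedure already invoked in Section \ref{sec:2}, cf.\ Lemma \ref{lemma:extension}, and keeping $\hat\eta_0\equiv\id$), and to verify that this extension can be chosen to satisfy $\hat\eta_\e(W\cap(\R\times\{0\}))\subset\R\times\{0\}$ on a neighborhood $W$ of $f_\e^{-1}(\ti\cF_{\e,y_*})\cup\ti\cF_{\e,y_*}\cup f_\e(\ti\cF_{\e,y_*})$. On the natural domain this invariance is automatic: by Remark \ref{rem:2.2} the map $\Xi_2^{-1}$ sends $\{v=0\}$ into the positive $y$-axis $\{0\}\times\R_+$, which is contained in $W^u_{f_\e}(o)$; by definition of $N_{}$ and of the separatrix, $f_\e^{N_{}}$ sends the relevant piece of $\{0\}\times\R_+$ onto a piece of $\R_+\times\{0\}\subset W^s_{f_\e}(o)$, which $\Xi_1$ then sends back into $\{v=0\}$; finally $T_{-\sigma_{0,N_{}}}$ preserves $\{v=0\}$ (it is a purely horizontal shift once one extends $\sigma_{0,N_{}}$ to a $C^k$ function on $\R$ as in Lemma \ref{lemma:4.3}). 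The extension lemma allows one to preserve this invariance globally.

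The main subtlety, rather than a genuine obstacle, is that $\ti n_\e$ is integer-valued and only piecewise constant, so the identity above must be read separately on each component of $\ti\cF_{\e,c_{*}y_{*}}$ on which $\ti n_\e$ is constant (consistent with the "on a neighborhood of $\ti\cF_{\e,c_{*}y_{*}}$" phrasing); on each such component the shift $\hat l_\e$ truly depends only on $v$, making $T_{\hat l_\e}$ a bona fide symplectic shear. A secondary point to monitor is that $\Xi_2^{-1}$ is a priori defined only on $\R\times\R_+^*$, so one must justify that the $C^k$-extension of $\hat\eta_\e$ across the boundary $\{v=0\}$ (where the stable/unstable branches of the separatrix meet $o$ in the straightening coordinates) can be carried out while preserving symplecticity and the $C^k$-smallness estimate, which is precisely what Lemma \ref{lemma:extension} provides.
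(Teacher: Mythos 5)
Your proposal is correct and follows essentially the same route as the paper: the same splitting $\hat f_{\e}=f_{\e}^{N}\circ\phi^{\ti n_{\e}}_{J\nabla Q_{\e}}$, the same use of Lemmas \ref{prop:linearization} and \ref{lemma:4.3}, and in fact your $\hat\eta_{\e}=(\Xi_{1}\circ f_{\e}^{N}\circ\Xi_{2}^{-1})\circ T_{-\sigma_{0,N_{}}}$ coincides with the paper's $\Xi_{1}\circ f_{\e}^{N}\circ f_{0}^{-N}\circ\Xi_{1}^{-1}$ by (\ref{eq:4.6}). The handling of the extension, the $C^k$-smallness via \ref{iii1}, and the invariance of $\{v=0\}$ via Remark \ref{rem:2.2} all match the paper's argument.
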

\begin{proof} We write (we use (\ref{deffhat}), (\ref{defneps}), \ref{iii2})
\begin{align*} \hat f_{\e}&=f_{\e}^{N_{}+\ti n_{\e}}\\
&=f_{\e}^{N_{}}\circ \phi_{J\nabla Q_{\e}}^{\ti n_{\e}}\\
&=\eta_{\e}\circ  f_{0}^{N_{}}\circ \phi_{J\nabla Q_{\e}}^{\ti n_{\e}}\end{align*}
with \be \eta_{\e}=f_{\e}^{N_{}}\circ f_{0}^{-N_{}}.\label{eq:4.21}\ee
As a consequence, if we set
\be \hat \eta_{\e}=\Xi_{1}\circ \eta_{\e}\circ \Xi_{1}^{-1}\qquad  \textrm{and}\qquad \hat n_{\e}=\ti n_{\e}\circ \Xi_{1}^{-1},\label{eq:4.22}
\ee
we have using (\ref{eq:4.6})
\begin{align*}\Xi_{1}\circ \hat f_{\e}\circ \Xi_{1}^{-1}&=\hat\eta_{\e}\circ (\Xi_{1}\circ f_{0}^{N_{}}\circ\Xi_{2}^{-1})
\circ (\Xi_{2}\circ \phi^{\ti n_{\e}}_{J\nabla Q_{\e}} \circ \Xi_{2}^{-1}) \circ (\Xi_{2}\circ\Xi_{1}^{-1}) \\
&=\hat \eta_{\e}\circ T_{\s_{0,N_{}}}\circ \phi^{\ti n_{\e}\circ  \Xi_{1}^{-1}}_{J\nabla (Q_{\e}\circ \Xi_{2}^{-1})} \circ T_{-\ln v}\\
&=\hat \eta_{\e}\circ T_{\s_{0,N_{}}}\circ T^{\hat n_{\e}}_{q'_{\e}}\circ T_{-\ln v}\\
&=\hat \eta_{\e}\circ T_{\s_{0,N_{}}+\hat n_{\e}q'_{\e}-\ln v}
\end{align*}
which is (\ref{eq:5.18}) together with (\ref{eq:5.18'}).

Note that by (\ref{eq:4.21}), (\ref{eq:4.22}), Remark \ref{rem:2.2} and the fact that $\R\ni\e\mapsto f_{\e}\in C^k(V_{k},\R^2)$ is continuous, one has 
$$\begin{cases}&\lim_{\e\to 0}\|\hat \eta_{\e}-id\|_{C^k}=0\\
&\hat\eta_{\e}(W\cap (\R\times\{0\}))\subset \R\times\{0\}.
\end{cases}$$

\end{proof}

\section{Renormalization}\label{sec:5}
We define in this section a {\it renormalization} $\bar f_{\e}$ of the map $f_{\e}$. The first return map $\hat f_{\e}$ of $f_{\e}$ in the fundamental domain $\cF_{\e,y_{*}}$  we have constructed in the previous Section \ref{sec:4} is not differentiable at every point (see (\ref{eq:5.18}), (\ref{eq:5.18'} and the fact that the integer valued function $\hat n_{\e}$ has  in general discontinuity points). On the other hand, if one {\it glues} the ``vertical'' boundaries of $\cF_{\e,y_{*}}$ by $f_{\e}$ we obtain an {\it abstract} open annulus $\ti F_{\e,y_{*}}/f_{\e}$ (see Subsections \ref{sec:glueing}, \ref{sec:5.2}) and the map $\hat f_{\e}$ is now $C^k$ on it.  We can {\it uniformize}\footnote{Uniformizing the annulus is equivalent to  conjugating $f_{\e}$ to $(x,y)\mapsto (x+1,y)$ on a domain containing $\cF_{\e,y_{*}}$. This procedure is, in a different context, the one described in \cite{Y}. We shall often call the uniformization operation  {\it normalization} in reference to the corresponding renormalization procedure defined for quasi-periodic cocycles, {\it cf.} \cite{K}, \cite{AK1}. } this abstract annulus so that it becomes the standard (with the usual topology)  open annulus $\R/\Z\times ]0,c[$ (some $c>0$), see Subsection \ref{sec:5.3}, and the map $\hat f_{\e}$ in these new coordinates turns into  a $C^k$ diffeomorphism  $\bar f_{\e}$ defined on (part of) this standard annulus. This is the\footnote{One should say ``a'' instead of ``the'' since the uniformizing/normalizing procedure is not unique.} {\it renormalized} diffeomorphism associated to $f_{\e}$.  
\subsection{Glueing}\label{sec:glueing}
Let $\cF$ be an open set of $\R^2$, $L$ a 1-dimensional submanifold of $\R^2$ and $f$ an orientation preserving  smooth diffeomorphism from a neighborhood of $\cF\cup L$ to a neighborhood of $f(\cF\cup L)$. We assume that:
\begin{enumerate}
\item\label{i1} $f(\cF\cup L)\cap (\cF\cup L)=\emptyset$;
\item\label{i2} $\cF\cup L$ is a 2-dimensional submanifold of $\R^2$ with boundary  and this boundary is  $\pa(\cF\cup L)=L$; in particular, for any point $p\in L$ there exists an open set $U_{p}$, $p\in U_{p}\subset\R^2$, and a smooth diffeomorphism $\ph_{p}:U_{p}\to \ph_{p}(U_{p})\subset\R^2$ such that  $\ph_{p}(U_{p}\cap L)=\ph_{p}(U_{p})\cap (\R\times\{0\})$ and $\ph_{p}(U_{p}\cap \cF)=\ph_{p}(U_{p})\cap (\R\times \R^*_{+})$;
\item\label{item4} for any $p\in\cF\cup L$ and $U_{p}$, one has $U_{p}\cap f(\cF\cup L)=\emptyset$;
\item\label{i3} for any $p\in L$ one has $f^{-1}(f(U_{p})\cap \cF)=\ph_{p}^{-1}(\ph_{p}(U_{p})\cap (\R\times \R^*_{-}))$, for any of the previous chart $(U_{p},\ph_{p})$ at $p$.
\end{enumerate}
We define the {\it topological space} $(\cF\cup L,\cT)$ as being the set  $\cF\cup L$ endowed with the following topology $\cT$: a subset  $S$ of $\cF\cup L$ is an element of $\cT$  (i.e. an open set)  if for every $p\in S$ there exists an open set  $V\subset \R^2$ (contained in a neighborhood of $\cF\cup L$ where $f$ is defined)  such that  $V\cap f(\cF\cup L)=\emptyset$ and $p\in (V\cup f(V))\cap(\cF\cup L)\subset S$.

We can then define the following {\it differentiable structure} on $(\cF\cup L,\cT)$ as follows; (a):  if $p\in\cF$  we define the  local chart    $C_{p}:=(W_{p},id)$ where $W_{p}$ is an open set of $\R^2$ such that $p\in W_{p}\subset \cF$;  and (b): if $p\in L$ we define the local chart $C_{p}:=(W_{p},\psi_{p})$ where  $W_{p}$ is the open set of $\cF\cup L$ (see condition (\ref{item4}))  $W_{p}=(\cF\cup L)\cap (U_{p}\cup f(U_{p}))$ (here $(U_{p},\ph_{p})$ is the  local chart for $p\in L$ as defined in  (\ref{i2})) and where $\psi_{p}$ is defined by  (we use condition (\ref{i3})),
$$\begin{cases}&\psi_{p}=\ph_{p}\quad \textrm{on} \quad U_{p}\cap (\cF\cup L)=\ph_{p}^{-1}((\ph_{p}(U_{p})\cap (\R\times\R_{+}))\\
&\psi_{p}=\ph_{p}\circ f^{-1}\quad \textrm{on} \quad f(U_{p})\cap \cF=f\circ\ph_{p}^{-1}((\ph_{p}(U_{p})\cap (\R\times\R_{-}^*)).
\end{cases}
$$
We denote by $\cA$ the collection of all these local charts $C_{p}$ and we set $(\cF\cup L)/f=(\cF\cup L,\cT,\cA)$. 

\begin{rem}\label{rem:5.4.0}If we  assume in addition that $f$ preserves the standard symplectic form $dx\wedge dy$ on $\R^2$, we can endow $(\cF\cup L)/f$ with a  symplectic form $\omega$. 
\end{rem}

\begin{rem}\label{rem:5.4.1}
If $g:\cF\to g(\cF)$ is a smooth  diffeomorphism defined in a neighborhood of $\cF$, it induces a smooth diffeomorphism (that we still denote $g$) $g:(\cF\cup L)/f\to (g(\cF)\cup g(L))/(g\circ f\circ g^{-1})$.
\end{rem}
\begin{rem}\label{rem:5.4.2}If $\cF=[0,1[\times ]0,1[$, $L=]0,1[$ and $f=T_{1}:(x,y)\mapsto (x+1,y)$ one sees that  $(\cF\cup L)/T_{1}$ is (diffeomorphic to) the standard open annulus $(\R/\Z\times ]0,1[,{\rm can.})$ endowed with its canonical differentiable structure. 
\end{rem}

\subsection{The space $(\cF_{\e,y_{*}}\cup L_{y_{*}})/f_{\e}$}\label{sec:5.2} If $\e$ and $y_{*}$ are small enough item (\ref{i1}) is satisfied and we can find charts $(p,U_{p})$ such that items (\ref{i2}), (\ref{item4}) (\ref{i3}) are  satisfied. See Figure \ref{fig:3}.  We can then define the manifold $(\cF_{\e,y_{*}}\cup L_{y_{*}})/f_{\e}$. We shall see that it is an annulus without boundary, {\it cf.} Lemma \ref{lemma:5.2}.

Note that if $0<c_{*}<1$, the smaller  set $\ti \cF_{\e,c_{*}y_{*}}=\cF_{\e,c_{*}y_{*}}\cup L_{c_{*}y_{*}}$ is an open subset of $(\cF_{\e,y_{*}}\cup L_{y_{*}})/f_{\e}$ (which means that it belongs to $\cT$) and it can be endowed with the topology and differentiable structure induced by the inclusion. We denote $(\cF_{\e,c_{*}y_{*}}\cup L_{c_{*}y_{*}})/f_{\e}$  the thus obtained submanifold of $(\cF_{\e,y_{*}}\cup L_{y_{*}})/f_{\e}$.  The following lemma is then tautological
\begin{lemma}The map $\hat f_{\e}$ induces a $C^k$ map $\ti \cF_{\e,c_{*}y_{*}}/f_{\e}\to \ti \cF_{\e,y_{*}}/f_{\e}$.
\end{lemma}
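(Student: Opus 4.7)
The plan is to verify directly in the charts of the glued manifold $(\cF_{\e,y_{*}}\cup L_{y_{*}})/f_{\e}$ that the prescription $z\mapsto \hat f_{\e}(z)$ descends to a $C^k$ map. On any open set of $\ti\cF_{\e,c_{*}y_{*}}$ where the integer-valued function $n_{\e}$ is locally constant and equal to some $n$, one has $\hat f_{\e}=f_{\e}^n$, which is $C^k$ as a composition of $C^k$ maps. So the only issue is regularity (and well-definedness) at the discontinuity locus of $n_{\e}$.

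First I would identify the geometry of this locus. By the very definition $n_{\e}=N+\ti n_{\e}$ with $\ti n_{\e}(z)=\min\{j\ge 1:f_{\e}^j(z)\in f_{\e}^{-N}(\ti\cF_{\e,y_{*}})\}$, the function $n_{\e}$ can jump from $n$ to $n+1$ precisely across the curves $f_{\e}^{-n}(L_{y_{*}})$. Fix such a transition point $z_{0}$ and let $w_{0}=f_{\e}^n(z_{0})\in L_{y_{*}}$. On one side of $f_{\e}^{-n}(L_{y_{*}})$ near $z_{0}$, $\hat f_{\e}=f_{\e}^n$ lands in $\cF_{\e,y_{*}}$ near $w_{0}$; on the other side, $\hat f_{\e}=f_{\e}^{n+1}$ lands in $f_{\e}(\cF_{\e,y_{*}})$ near $f_{\e}(w_{0})$. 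Both $w_{0}$ and $f_{\e}(w_{0})$ are represented in the single chart $(W_{w_{0}},\psi_{w_{0}})$ of $\ti\cF_{\e,y_{*}}/f_{\e}$ constructed in \S\ref{sec:glueing}.

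The key computation is then to read $\hat f_{\e}$ through $\psi_{w_{0}}$. By the construction of $\psi_{w_{0}}$, on $U_{w_{0}}\cap(\cF_{\e,y_{*}}\cup L_{y_{*}})$ one has $\psi_{w_{0}}=\ph_{w_{0}}$, while on $f_{\e}(U_{w_{0}})\cap\cF_{\e,y_{*}}$ one has $\psi_{w_{0}}=\ph_{w_{0}}\circ f_{\e}^{-1}$. Consequently, on the side where $\hat f_{\e}=f_{\e}^n$,
\[
\psi_{w_{0}}\circ\hat f_{\e}=\ph_{w_{0}}\circ f_{\e}^n,
\]
and on the side where $\hat f_{\e}=f_{\e}^{n+1}$,
\[
\psi_{w_{0}}\circ\hat f_{\e}=\ph_{w_{0}}\circ f_{\e}^{-1}\circ f_{\e}^{n+1}=\ph_{w_{0}}\circ f_{\e}^n.
\]
The two branches therefore glue into the single $C^k$ map $\ph_{w_{0}}\circ f_{\e}^n$, defined on a full neighborhood of $z_{0}$. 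At source points $z_{0}\in L_{c_{*}y_{*}}$ one argues symmetrically using the chart $\psi_{z_{0}}$ at source and the first-return identity $\hat f_{\e}\circ f_{\e}^{-1}=f_{\e}^{-1}\circ\hat f_{\e}$ on the overlap, which shows that the induced map on the source quotient is well-defined.

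No real obstacle is expected: the whole point of the gluing construction is precisely that the factor of $f_{\e}$ by which $\hat f_{\e}$ jumps across a discontinuity of $n_{\e}$ is exactly cancelled by the transition function of the charts $\psi_{w_{0}}$. This is why the authors label the statement tautological.
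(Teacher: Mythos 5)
Your proof is correct and spells out exactly the computation the paper leaves implicit (the paper offers no written proof, declaring the lemma tautological): the transition function $\ph_{w_{0}}\circ f_{\e}^{-1}$ of the chart at a gluing point $w_{0}\in L_{y_{*}}$ cancels the unit jump of $n_{\e}$ across $f_{\e}^{-n}(L_{y_{*}})$, so both branches of $\hat f_{\e}$ read as the single $C^k$ map $\ph_{w_{0}}\circ f_{\e}^{n}$ in that chart. One small slip at the source side: the relevant identity on the overlap is $\hat f_{\e}\circ f_{\e}=\hat f_{\e}$ (equivalently $n_{\e}\circ f_{\e}=n_{\e}-1$ there), rather than $\hat f_{\e}\circ f_{\e}^{-1}=f_{\e}^{-1}\circ\hat f_{\e}$, but your chart computation goes through verbatim with this correction.
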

We shall need in Section \ref{sec:6} the following lemma:

\begin{lemma}\label{volform}There exists a probability measure with positive density  $\pi_{\e,y_{*}}$   on $\ti \cF_{\e,y_{*}}/f_{\e}$ which is $\hat f_{\e}$ invariant: for any measurable set  $A\in \ti \cF_{\e,y_{*}}/f_{\e}$ such that $\hat f_{\e}^{-1} (A)\in \ti \cF_{\e,y_{*}}/f_{\e}$ one has $\pi_{\e,y_{*}}(A)=\pi_{\e,y_{*}}(\hat f_{\e}^{-1}(A))$.
\end{lemma}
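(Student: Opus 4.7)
The plan is to define $\pi_{\e,y_{*}}$ as the normalized Liouville measure on the quotient manifold, inherited from the symplectic form $\omega=dx\wedge dy$ on $\R^2$.

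Since $f_{\e}$ is symplectic, $f_{\e}^*\omega=\omega$, so by Remark \ref{rem:5.4.0} the form $\omega$ descends to a well-defined symplectic $2$-form $\bar\omega$ on $\ti\cF_{\e,y_{*}}/f_{\e}$: the gluing along $L_{y_{*}}$ is compatible with $\omega$ precisely because the identification map $f_{\e}$ preserves $\omega$. Let $\bar\mu$ denote the associated area measure. The set $\ti\cF_{\e,y_{*}}=\cF_{\e,y_{*}}\cup L_{y_{*}}$ is bounded in $\R^2$ (it is enclosed by the four explicit curves (a)--(d) of Subsection \ref{sec:4.1}) and no two of its points are identified by the gluing, which pairs $L_{y_{*}}$ only with $f_{\e}(L_{y_{*}})\not\subset\ti\cF_{\e,y_{*}}$. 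Hence $\bar\mu(\ti\cF_{\e,y_{*}}/f_{\e})=\Leb(\cF_{\e,y_{*}})\in(0,+\infty)$, and I set $\pi_{\e,y_{*}}:=\bar\mu/\bar\mu(\ti\cF_{\e,y_{*}}/f_{\e})$. Positive density of $\pi_{\e,y_{*}}$ is immediate from the nondegeneracy of $\bar\omega$ on the $2$-manifold.

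For the $\hat f_{\e}$-invariance, the crucial observation is that $\hat f_{\e}=f_{\e}^{n_{\e}}$ is \emph{locally} a finite iterate of $f_{\e}$. The integer-valued function $n_{\e}$ is locally constant on an open set of full measure, its level sets being cut out by the preimages of $L_{y_{*}}$ under $f_{\e}^{\pm 1}$. On each open piece $\{n_{\e}=n\}$, the local charts of Subsection \ref{sec:glueing} identify $\hat f_{\e}$ with a restriction of $f_{\e}^n$, which preserves $\omega$ and therefore $\bar\omega$. It follows that $\hat f_{\e}^{*}\bar\omega=\bar\omega$ on the domain of $\hat f_{\e}$, i.e.\ $\bar\mu(\hat f_{\e}(B))=\bar\mu(B)$ for every measurable $B\subset\ti\cF_{\e,c_{*}y_{*}}/f_{\e}$. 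The desired equality $\pi_{\e,y_{*}}(\hat f_{\e}^{-1}(A))=\pi_{\e,y_{*}}(A)$ then follows by decomposing $\hat f_{\e}^{-1}(A)=\bigsqcup_{n\geq N_{}}A_{n}$ with $A_{n}=\{z:n_{\e}(z)=n,\ f_{\e}^n(z)\in A\}$, summing the symplectic identities $\Leb(A_{n})=\Leb(f_{\e}^n(A_{n}))$ over $n$, and observing that the disjoint images $f_{\e}^n(A_{n})$ recover $A$ modulo a null set.

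The one point that deserves attention is this last exhaustion statement, which requires $\hat f_{\e}$ to be essentially surjective onto $\ti\cF_{\e,y_{*}}/f_{\e}$. I would obtain it by applying the symmetric construction to $f_{\e}^{-1}$: the same arguments yield a first return map $\hat g_{\e}$ which coincides with $\hat f_{\e}^{-1}$ wherever both are defined, and the two domains together cover $\ti\cF_{\e,y_{*}}/f_{\e}$ modulo a null set by Poincar\'e recurrence for Lebesgue measure on the bounded $f_{\e}$-invariant region $\oli{\Delta_{\Sigma_{\e}}}$. This is the only genuine technical point; the remainder of the proof is routine manifold bookkeeping. Continuous dependence of $\pi_{\e,y_{*}}$ on $\e$ is automatic from the continuous dependence of $f_{\e}$ on $\e$.
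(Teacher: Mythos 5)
Your construction is correct and is essentially the paper's own proof: the measure is the normalized area measure of the symplectic form that descends to the glued annulus (Remark \ref{rem:5.4.0}), and invariance follows because $\hat f_{\e}$, being locally a power of the symplectic map $f_{\e}$, preserves that form wherever it is defined. The only divergence is your closing digression on essential surjectivity, which the paper does not need (the invariance is only ever invoked in the forward-image form $\pi_{\e,y_{*}}(\hat f_{\e}(B))=\pi_{\e,y_{*}}(B)$, which your level-set decomposition of $n_{\e}$ already yields); note moreover that your recurrence argument would not quite give surjectivity onto all of $\ti\cF_{\e,y_{*}}/f_{\e}$ in any case, since the domain of $\hat f_{\e}$ is the strictly smaller set $\ti\cF_{\e,c_{*}y_{*}}$.
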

\begin{proof}We shall in fact construct this measure  $\pi_{\e,y_{*}}$  on the bigger set  $\hat  \cF_{\e,y_{*}}/f_{\e}$
$$\hat \cF_{\e,y_{*}}=\ti  \cF_{\e,y_{*}}\cup \sigma(\ti  \cF_{\e,y_{*}})$$
where $\sigma :\R^2\to\R^2$ is the reflection $(x,y)\mapsto (x,-y)$ (it commutes with $f_{\e}$ in $V_{}$, see condition \ref{iii2}).
From Remark \ref{rem:5.4.0} there exists a symplectic form $\omega_{\e}$ on $\hat  \cF_{\e,y_{*}}/f_{\e}$.
Note that the first return map $\hat f_{\e}$ is not defined on the whole  set $\hat \cF_{\e,y_{*}}/f_{\e}$ but nevertheless
$$(\hat f_{\e})^*\omega_{\e}=\omega_{\e}$$
whenever this formula makes sense.
The probability measure  $\pi_{\e,y_{*}}$ defined by
$$\pi_{\e,y_{*}}(A)=\int_{A}|\omega_{\e}|/\int_{\cF_{\e,y_{*}}}|\omega_{\e}|$$
is $\hat f_{\e}$ invariant.
\end{proof}

\begin{figure}
\begin{center}
\begin{pspicture}(-1,-1)(10,5)
\psset{xunit=1.7cm,yunit=1.2cm}
\psline(2,0)(4,0)
\psline(4,0)(4,3.7)

\psbezier(2,0)(1.9,2)(1.9,2)(1,3.5)
\pscurve[linestyle=dotted](1,3.5)(1.5,3.7)(2,3.6)(2.5,3.7)(3,3.6)(3.5,3.7)(4,3.7)
\psarc[linestyle=dotted](4,2){0.8}{-90}{90}
\psarc(4,2){0.8}{90}{-90}
\psdot(4,2)
\rput(4.2,2.2){$x$}
\psdot(1.6,2.46)
\rput(1.3,2.2){$f_{\e}(x)$}
\rput(3.4,1.2){$U_{-}$}
\psline[linewidth=0.1pt](3.5,1.27)(3.8,1.6)
\rput(4.6,1.2){$U_{+}$}
\psline[linewidth=0.1pt](4.5,1.27)(4.2,1.6)
\rput(2.4,1.7){$f_{\e}(U_{+})$}
\psline[linewidth=0.1pt](2.4,1.9)(2,2.4)
\rput(4.4,0.4){$L_{y_{*}}$}
\rput(1.4,0.4){$f_{\e}(L_{y_{*}})$}
\rput(3,-.3){$[x_{*},f_{\e}(x_{*})]\times\{0\}$}
\rput(4.7,3.5){$\cF_{\e,y_{*}}$}
\psline[linewidth=0.1pt](4.3,3.4)(3.6,3.2)

\pscurve(1.3,3)(1.8,3.2)(2.2,2.8)(2,2.2)(1.8,2)
\pscurve[linestyle=dotted](1.8,2)(1.4,2)(1.1,2.1)(1.1,2.6)(1.3,3)

\pscustom[fillstyle=hlines,linestyle=none]{%
\pscurve[linewidth=0pt,linecolor=blue,liftpen=0](1.3,3)(1.8,3.2)(2.2,2.8)(2,2.2)(1.8,2)
\pscurve[linewidth=0pt,liftpen=0](1.8,2)(1.6,2.45)(1.4,2.8)(1.3,3)}

\pscustom[fillstyle=hlines,linestyle=none]{%
\psarc[linewidth=0pt,linecolor=blue,liftpen=0](4,2){0.8}{90}{-90}
\psline[linewidth=0pt,liftpen=0](4,2.65)(4,2.35) }

\end{pspicture}
\end{center}
\caption{Glueing: $(\cF_{\e,y_{*}}\cup L_{y_{*}})/f_{\e}$ }\label{fig:3}
\end{figure}

\subsection{Normalization of $f_{\e}$}\label{sec:5.3} 
We now uniformize the abstract annulus $\ti \cF_{\e,y_{*}}/f_{\e}$. To do that it is enough to normalize $f_{\e}$ in the sense of Item  \ref{il5.2.2}  of the following lemma.

\begin{lemma}\label{lemma:5.2}[Normalization Lemma] There exists a continuous family  $(h_{\e})_{\e}$ of (not necessarily symplectic)  $C^{k}$-diffeomorphisms  defined on a neighborhood of $\ti \cF_{\e,y_{*}}$ such that for some $c_{}>0$
\begin{enumerate}
\item $h_{\e}$ sends $\ti\cF_{\e,y_{*}}/f_{\e}$ to the standard open annulus $( (\R/\Z)\times ]0,c_{}[,{\rm can.})$
\item\label{il5.2.2} $h_{\e}\circ f_{\e}\circ h_{\e}^{-1}=T_{1}:(x,y)\mapsto (x+1,y)$
\item\label{il5.2.3} $h_{\e}([x_{*},f_{\e}(x_{*)}] \times\{0\}=[0,1[\times \{0\}$.
\end{enumerate}
\end{lemma}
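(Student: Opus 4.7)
The plan is to define $h_\e$ as the inverse of a $C^k$ parameterization
\be \Psi_\e:\ ]-\delta,1+\delta[\,\times\,]0,c[\ \longrightarrow\ \R^2\label{plpsi}\ee
of a neighborhood of $\ti\cF_{\e,y_*}$ satisfying the equivariance relation
\be \Psi_\e(t+1,s)=f_\e(\Psi_\e(t,s))\qquad\text{for }|t|<\delta,\ s\in\,]0,c[.\label{pleq}\ee
Setting $h_\e:=\Psi_\e^{-1}$ then automatically yields $h_\e\circ f_\e=T_1\circ h_\e$, which is property~(2), and descending modulo $T_1$ in the first coordinate gives a diffeomorphism between the annuli $\ti\cF_{\e,y_*}/f_\e$ and $\R/\Z\,\times\,]0,c[$, which is property~(1). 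Property~(3) is obtained by further requiring that $t\mapsto\Psi_\e(t,0)$ parameterize the segment $[f_\e(x_*,0),(x_*,0)]\subset \R\times\{0\}$ with $\Psi_\e(0,0)=(x_*,0)$; this is automatically consistent with~(\ref{pleq}) because the stable manifold is $f_\e$-invariant.

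To construct $\Psi_\e$ I would proceed in three stages. First, I would choose, continuously in $\e$, a $C^k$-tubular neighborhood of $\overline{L_{y_*}}$: a $C^k$ diffeomorphism from $]-\delta,\delta[\,\times\,]0,c[$ onto an open neighborhood of $\overline{L_{y_*}}$ inside $W$, sending $\{0\}\,\times\,]0,c[$ onto $L_{y_*}$ and the bottom edge onto the relevant portion of the stable manifold. This defines $\Psi_\e$ near $\{0\}\,\times\,]0,c[$. Second, I would propagate the definition to the strip $]1-\delta,1+\delta[\,\times\,]0,c[$ using the equivariance relation~(\ref{pleq}); this lands in a neighborhood of $f_\e(\overline{L_{y_*}})$. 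Third, I would interpolate a $C^k$ diffeomorphism from the middle rectangle $[\delta,1-\delta]\,\times\,]0,c[$ onto the remaining part of $\ti\cF_{\e,y_*}$. The interpolation is possible since $\ti\cF_{\e,y_*}$ is simply connected and bounded by four smooth arcs (the bottom segment on the stable manifold, the hyperbolic arc on top, and the two transversals $L_{y_*}$, $f_\e(L_{y_*})$), two of which are already prescribed by the first two stages.

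The main technical difficulty is global $C^k$-regularity of $\Psi_\e$ at the seam $\{0\}\,\times\,]0,c[$ (and, by equivariance, at $\{1\}\,\times\,]0,c[$): the two strips produced in stages one and two come from the two \emph{different} sides $U_p\cap\cF_{\e,y_*}$ and $f_\e(U_p)\cap\cF_{\e,y_*}$ of the boundary $L_{y_*}$ in $\R^2$, and must glue to a genuinely smooth parameterization. This is precisely what the $C^k$ differentiable structure built in Subsection~\ref{sec:glueing} is designed to guarantee: the atlas $(W_p,\psi_p)$ is defined so that a map respecting the $f_\e$-glueing on either side of $L_{y_*}$ is $C^k$ on the abstract annulus $\ti\cF_{\e,y_*}/f_\e$ if and only if its lift is $C^k$ as a map to the ambient~$\R^2$ after the equivariant identification. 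Equivalently, one may first produce a $C^k$ diffeomorphism from $\ti\cF_{\e,y_*}/f_\e$ onto $\R/\Z\,\times\,]0,c[$ via the smooth classification of annuli, using the four prescribed boundary arcs as scaffolding, and then lift it through the universal cover to obtain $\Psi_\e$. Continuous dependence on $\e$ is preserved in every step, since the tubular neighborhood, the equivariant propagation, and the middle-strip interpolation can each be carried out continuously (by a standard partition-of-unity argument) in a $C^k$-continuous family of diffeomorphisms $(f_\e)_\e$.
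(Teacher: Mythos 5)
Your construction goes down a genuinely different road from the paper's, and the place where you have hidden all the work is exactly the place the paper's hypotheses are designed to make trivial. The paper's proof of Lemma \ref{lemma:5.2} is a short explicit computation: by condition \ref{iii2}, on the neighborhood $V\supset\ti\cF_{\e,y_{*}}$ the map $f_{\e}$ is the time-1 map of $J\nabla q_{\e}(xy)$, so in the symplectic coordinates $(u,v)=\Xi_{1}(x,y)=(\ln x,xy)$ it becomes the exact shear $T_{q'_{\e}}:(u,v)\mapsto(u+q'_{\e}(v),v)$; the fiberwise rescaling $g_{\e}:(u,v)\mapsto(u/q'_{\e}(v),v)$ turns this into $T_{1}$, and one last shear $j_{\e}:(u,v)\mapsto(u-\g_{\e}(v),v)$ straightens the fundamental domain, giving $h_{\e}=j_{\e}\circ g_{\e}\circ\Xi_{1}$ in closed form. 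No tubular neighborhood, no interpolation, no appeal to the classification of annuli.

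Two consequences for your proposal. First, the interpolation in your third stage is a genuine gap, not a routine detail: you must extend a $C^{k}$ diffeomorphism prescribed (with all derivatives) near the two transversals $L_{y_{*}}$ and $f_{\e}(L_{y_{*}})$ to a diffeomorphism of the whole middle rectangle onto the remaining piece of $\ti\cF_{\e,y_{*}}$, with uniform $C^{k}$ control and continuously in $\e$; a partition of unity produces smooth maps but not injective ones, so one needs an isotopy-extension/collar argument with parameters that you do not supply. (There is also a small domain mismatch: item \ref{il5.2.3} forces $h_{\e}$ to be defined on the segment $[x_{*},f_{\e}(x_{*})]\times\{0\}$, which your parameter domain $]-\delta,1+\delta[\times]0,c[$ excludes.) Second, and more seriously for the paper as a whole: even if carried out, a soft $h_{\e}$ proves only the literal statement of the lemma, whereas everything that follows uses the explicit fiber-preserving form $h_{\e}\circ\Xi_{1}^{-1}:(u,v)\mapsto(u/q'_{\e}(v)-\g_{\e}(v),v)$ — see Remark \ref{rem:lemma5.2}, the twist formula (\ref{5.27}), the identity $\bar b_{\e}(x,0)=0$ in Proposition \ref{prop:main}, and the accumulation estimate in Section \ref{sec:8}, which all quote the definition (\ref{e5.26}) of $h_{\e}$ rather than the statement of the lemma. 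So condition \ref{iii2} is not an optional convenience here; it is what makes the normalization explicit, and your construction, by not using it, discards exactly the information the renormalization scheme needs.
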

\begin{proof}Using  condition \ref{iii2} and the change of coordinates (\ref{defXi}) of Section \ref{sec:3}, we see  that on a neighborhood of $\ti\cF_{\e,y_{*}}$ one has (we use the notation $(x,y)$ for $(u,v)$)
$$\Xi_{1}\circ f_{\e}\circ \Xi_{1}^{-1}=T_{q'_{\e}} :(x,y)\mapsto (x+q'_{\e}(y),y).$$
If $g_{\e}$ is the (not  necessarily symplectic) smooth diffeomorphism
\be g_{\e}:(x,y)\mapsto (\frac{x}{q'_{\e}(y)},y)\label{e5.23}\ee
one has 
\be g_{\e}\circ \Xi_{1}\circ f_{\e}\circ \Xi_{1}^{-1}\circ g_{\e}^{-1}=T_{1}.\label{e5.24}\ee
The set $\overline{(g_{\e}\circ \Xi_{1})(\cF_{\e,y_{*}})}$  is of the form
$$\overline{(g_{\e}\circ\Xi_{1})(\cF_{\e,y_{*}})}=\{(x,y),\ y\in [0,c_{}],\ \g_{\e}(y)\leq x\leq \g_{\e}(y)+1\}$$ where $c_{}>0$, $\g_{\e}:[0,c_{}]\to \R_{+}$ is $C^k$,
$\g_{\e}(0)=0$ and the map $\R\ni \e\mapsto \g_{\e}\in C^k([0,c_{}],\R)$ is continuous. This indeed  follows from the definition of   $\ti \cF_{\e,y_{*}}$ in subsection \ref{sec:4.1}), the definition of $\Xi_{1}$ (\ref{defXi}) and (\ref{e5.23}), (\ref{e5.24}). As a consequence, if we denote 
\be j_{\e}:(x,y)\mapsto (x-\gamma_{\e}(y),y)\label{e5.25}\ee
we have \be\begin{cases}&j_{\e}\circ T_{1}=T_{1}\circ j_{\e}\\
&j_{\e}\biggl( (g_{\e}\circ\Sigma_{1})(\cF_{\e,y_{*}})\biggr)=]0,1[\times ]0,c_{}[\\
&j_{\e}\biggl((g_{\e}\circ\Sigma_{1})(L_{y_{*}})\biggr)=\{0\} \times ]0,c_{}[.
\end{cases}\label{e5.26}
\ee
By Remarks \ref{rem:5.4.1}, \ref{rem:5.4.2}, the map 
\be h_{\e}=j_{\e}\circ g_{\e}\circ \Xi_{1}\label{e5.26}\ee
is a diffeomorphism that sends $\ti \cF_{\e,y_{*}}/f_{\e}$ to the standard annulus $([0,1[\times ]0,c_{}[)/T_{1}\simeq (\R/\Z)\times ]0,c_{}[$ and such that 
$$h_{\e}\circ f_{\e}\circ h_{\e}^{-1}=T_{1}.$$

To conclude the proof, we notice (\ref{il5.2.2}) is an immediate  consequence of the definition  (\ref{e5.26}) of $h_{\e}$.

\end{proof}

\begin{rem}\label{rem:lemma5.2}Note that if $T_{a}(x,y)=(x+a(y),y)$ one has
$$(h_{\e}\circ\Xi_{1}^{-1})\circ T_{a}\circ (h_{\e}\circ \Xi_{1}^{-1})^{-1}=T_{\ti a},\qquad \ti a(y)=a(y)/q_{\e}'(y).
$$
\end{rem}

\subsection{The renormalization $\bar f_{\e}$ of $f_{\e}$}\label{sec:5.4}

There exists $\d_{}\in ]0,c_{}[$ such that 
the map 
\be \bar f_{\e}\mathop{=}_{defin.}h_{\e}\circ \hat{f}_{\e}\circ h_{\e}^{-1}:\R/\Z\times ]0,\d_{}[\to \R/\Z\times ]0,c_{}[.\label{ee5.28}\ee
is well defined and is a $C^k$ diffeomorphism onto its image.

\begin{prop}\label{prop:main}
One has 
\be \bar f_{\e}=\bar\eta_{\e}\circ T_{l_{\e}}\label{eq:prop:5.4}\ee where $\bar\eta_{\e}$ is a $C^k$ diffeomorphism defined on $\R/\Z\times]0,\d_{}[$ and  $l_{\e}\in C^{k}(]0,c_{}[, \R/\Z)$; they satisfy
\be l_{\e}(y)=\frac{\sigma_{0,N_{}}(y)}{q'_{\e}(y)}-\frac{\ln y}{q'_{\e}(y)} \quad \mod\ \Z\label{5.27}\ee
\be\lim_{\e\to0}\|\bar\eta_{\e}-id\|_{C^{k}}=0\label{5.28}\ee
\be \bar\eta_{\e}:(x,y)\mapsto (x+a_{\e}(x,y),y+yb_{\e}(x,y))\label{labelab} \ee
where $a_{\e}\in C^k$, $b_{\e}\in C^{k-1}$ are  functions defined on $\R/\Z\times (0,\d_{})$.

Moreover, the map $\bar f_{\e}$ preserves a probability measure $\bar \pi_{\e,y_{*}}$ with positive density defined on $\R/\Z\times ]0,c_{}[$.
\end{prop}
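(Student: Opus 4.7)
The plan is to conjugate the formula from (\ref{eq:5.18})--(\ref{eq:5.18'}), namely $\Xi_{1}\circ \hat f_{\e}\circ \Xi_{1}^{-1}=\hat\eta_{\e}\circ T_{\hat l_{\e}}$ with $\hat l_{\e}(u,v)=\sigma_{0,N_{}}(v)+\hat n_{\e}(u,v)q'_{\e}(v)-\ln v$, by the map $h_{\e}\circ\Xi_{1}^{-1}=j_{\e}\circ g_{\e}$ supplied by the Normalization Lemma (\ref{e5.26}). First I would write
$$
\bar f_{\e}=\bigl[(h_{\e}\circ\Xi_{1}^{-1})\circ\hat\eta_{\e}\circ(h_{\e}\circ\Xi_{1}^{-1})^{-1}\bigr]\circ\bigl[(h_{\e}\circ\Xi_{1}^{-1})\circ T_{\hat l_{\e}}\circ(h_{\e}\circ\Xi_{1}^{-1})^{-1}\bigr]
$$
and identify the two brackets with $\bar\eta_{\e}$ and $T_{l_{\e}}$ respectively.

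For the translation factor, I would apply Remark \ref{rem:lemma5.2}: conjugation turns $T_{\hat l_{\e}}$ into the $x$-translation (in the new annular coordinates) by the function $\hat l_{\e}/q'_{\e}(y)=\sigma_{0,N_{}}(y)/q'_{\e}(y)+\hat n_{\e}(\cdot)-\ln y/q'_{\e}(y)$. The key observation is that although $\hat n_{\e}$ takes integer values with genuine jumps in $(u,v)$, the ambient space after normalization is the standard annulus $\R/\Z\times]0,c_{}[$, so the integer contribution $\hat n_{\e}$ disappears when the first coordinate is read modulo $\Z$. This is exactly the reason why $\hat f_{\e}$ failed to be smooth on $\ti\cF_{\e,y_{*}}$ yet becomes smooth on the quotient $\ti\cF_{\e,y_{*}}/f_{\e}$, and it delivers $l_{\e}$ as in (\ref{5.27}).

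For $\bar\eta_{\e}$, since $T_{l_{\e}}$ is a $C^{k}$ fibered translation, one can \emph{define} $\bar\eta_{\e}:=\bar f_{\e}\circ T_{l_{\e}}^{-1}$; joint continuity of composition together with $\hat\eta_{\e}\to\id$ in $C^{k}$ from (\ref{hateta}) and the continuous $\e$-dependence of $h_{\e}\circ\Xi_{1}^{-1}$ then yields (\ref{5.28}). To obtain the factorized form (\ref{labelab}) I would exploit the second line of (\ref{hateta}): $\hat\eta_{\e}$ preserves $\R\times\{0\}$, and since $g_{\e}(x,y)=(x/q'_{\e}(y),y)$ and $j_{\e}(x,y)=(x-\g_{\e}(y),y)$ both preserve $\{y=0\}$, the conjugate $\bar\eta_{\e}$ preserves the boundary circle $\R/\Z\times\{0\}$. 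Writing $\bar\eta_{\e}(x,y)=(x+a_{\e}(x,y),y+c_{\e}(x,y))$, the condition $c_{\e}(x,0)=0$ together with $C^{k}$ regularity permits Hadamard's lemma to factor $c_{\e}(x,y)=y b_{\e}(x,y)$ with $b_{\e}\in C^{k-1}$, while the $x$-component $a_{\e}$ retains full $C^{k}$ regularity.

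Finally, the invariant probability measure is the pushforward $\bar\pi_{\e,y_{*}}:=(h_{\e})_{*}\pi_{\e,y_{*}}$ of the measure constructed in Lemma \ref{volform}; it has positive density on $\R/\Z\times]0,c_{}[$ because $h_{\e}$ is a $C^{k}$ diffeomorphism between the two annuli and $\pi_{\e,y_{*}}$ had positive density. The main bookkeeping step is the first one: making precise that the discontinuous integer-valued $\hat n_{\e}$ becomes invisible on the quotient annulus, while simultaneously checking that $\bar\eta_{\e}$ extends as a genuine $C^{k}$ diffeomorphism of the full standard annulus — a fact already packaged in the glueing construction of Section \ref{sec:glueing}, which is why the whole strategy is profitable.
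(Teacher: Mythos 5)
Your proposal follows essentially the same route as the paper: the same factorization of $\bar f_{\e}$ through conjugation by $h_{\e}\circ\Xi_{1}^{-1}$, the same use of Remark \ref{rem:lemma5.2} and the integrality of $\hat n_{\e}$ to get (\ref{5.27}) mod $\Z$, the same boundary-preservation argument plus Hadamard factorization for (\ref{labelab}), and the same pushforward measure. It is correct as written.
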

\begin{proof} 
By (\ref{eq:5.18}) and Remark \ref{rem:lemma5.2} after Lemma \ref{lemma:5.2}
\begin{align*}\bar f_{\e}&=(h_{\e}\circ\Xi_{1}^{-1})\circ \hat \eta_{\e}\circ (h_{\e}\circ\Xi_{1}^{-1})^{-1}\circ (h_{\e}\circ \Xi_{1}^{-1})\circ T_{\hat l_{\e}}\circ (h_{\e}\circ \Xi_{1}^{-1})^{-1}\\
&=\bar\eta_{\e}\circ T_{l_{\e}}
\end{align*}
where 
\be\bar\eta_{\e}=(h_{\e}\circ \Xi_{1}^{-1})\circ \hat \eta_{\e}\circ (h_{\e}\circ \Xi_{1}^{-1})^{-1}\qquad\textrm{and}\qquad  l_{\e}(y)=(1/q'_{\e}(y))\hat l_{\e}(y).\label{5.31}\ee
Since $\bar \eta_{\e}:=(h_{\e}\circ \Xi_{1}^{-1})\circ \hat \eta_{\e}\circ (h_{\e}\circ \Xi_{1}^{-1})^{-1}$ and $\bar f_{\e}$ are $C^{k}$, the function ${l_{\e}}:]0,c_{}[\to\R/\Z$ is also $C^{k}$  and 
$$l_{\e}(y)=(1/q'_{\e}(y))\hat l_{\e}(y).$$
 By (\ref{eq:5.18'}) (remember that $\hat n_{\e}$ takes its value in $\Z$)
\begin{align*}l_{\e}(y)&= \frac{\sigma_{0,N_{}}(y)}{q'_{\e}(y)}+\hat n_{\e}(x,y)-\frac{\ln y}{q'_{\e}(y)}\\
&=\frac{\sigma_{0,N_{}}(y)}{q'_{\e}(y)}-\frac{\ln y}{q'_{\e}(y)} \quad \mod\ \Z\end{align*}
which is (\ref{5.27}).

Equation (\ref{5.28}) is a consequence of the definition of $\bar \eta_{\e}$, {\it cf.} (\ref{5.31}), the first equation of  (\ref{hateta}) and of the fact that $\R\ni \e\mapsto h_{\e}\in C^k$ is continuous (Lemma \ref{lemma:5.2}).

\medskip
We now claim that if $\bar \eta_{\e}(x,y)=(x+a_{\e}(x,y), y+\bar b_{\e}(x,y))$ one has for any $y$,
 \be \bar b_{\e}(x,0)=0.\label{barb}\ee 
 Indeed, since  
$$\bar \eta_{\e}:=(h_{\e}\circ \Xi_{1}^{-1})\circ \hat \eta_{\e}\circ (h_{\e}\circ \Xi_{1}^{-1})^{-1},$$
equality  (\ref{barb}) is a consequence of the second equation of  (\ref{hateta}), of item (\ref{il5.2.3}) of Lemma \ref{lemma:5.2} and of the fact that $\Xi_{1}(\R_{+}^*\times\{0\})=\R_{+}^*\times\{0\}$.

To prove   (\ref{labelab}) we thus notice  that  equality (\ref{barb}) gives us for  $\bar b_{\e}$ a decomposition $$\begin{cases}&\bar b_{\e}(x,y)=yb_{\e}(x,y)\\
&b_{\e}\in C^{k-1} .\end{cases}$$

Finally to conclude the proof of the Proposition
we observe that since the map $\hat f_{\e}:\ti\cF_{\e,c_{*}y_{*}}/f_{\e}\to \ti\cF_{\e,y_{*}}/f_{\e}$ preserves  the probability measure $\pi_{\e,y_{*}}$, {\it cf.} Lemma \ref{volform}, the diffeomorphism $\bar f_{\e}:\R/\Z\times ]0,\d_{}[\to \R/\Z\times ]0,c_{}[$ preserves the probability measure $\bar \pi_{\e,y_{*}}=(h_{\e})_{*}\pi_{\e,y_{*}}$ defined on $\R/\Z\times ]0,c_{}[$ (in the sense that if $A\subset \R/\Z\times ]0,c_{}[$ is a Borelian set such that $\bar f_{\e}^{-1}(A)\subset \R/\Z\times ]0,c_{}[$, one has $\bar \pi_{\e,y_{*}}(A)=\bar \pi_{\e,y_{*}}(\bar f_{\e}^{-1}(A))$).

\end{proof}

\section{Applying the Translated Curve Theorem}\label{sec:6}
We apply in this Section R\"ussmann's (or Moser's) Translated Curve Theorem to some {\it rescaled} version $\mathring{f}_{\e,n}$ of  the renormalization $\bar f_{\e}$ of $f_{\e}$ defined in Section \ref{sec:5.4}.
\subsection{The Translated  Curve Theorem}
Let  $\psi: \R/\Z\times ]e^{-1},1[\to\R/\Z\times\R$ ($\ln e=1$) be  a $C^k$  diffeomorphism defined on the annulus (or cylinder)  $\R/\Z\times ]e^{-1},1[$. We say that the graph ${\rm Gr}_{\g}:=\{(x,\gamma(x)): x\in\R/\Z\}$ of a continuous map $\gamma:\R/\Z\to \R/\Z\times ]e^{-1},1[$  is {\it translated} by $\psi$  if for some $t\in\R$
\be\psi({\rm Gr}_{\gamma})={\rm Gr}_{t+\gamma}\label{invcurve}
\ee
and {\it invariant} if $t=0$.  If ${\rm Gr}_{\g}$ satisfies (\ref{invcurve}),   there exists an orientation preserving  homeomorphism of the circle $g:\R/\Z\to\R/\Z$ such that $\psi(x,\gamma(x))=\psi(g(x),t+\gamma(g(x))$. If $t=0$ (resp. $t\ne 0$), we define  (resp. with  a clear abuse of language) the rotation number of ($\psi$ on) the invariant (resp. translated)  graph ${\rm Gr}_{\g}$ as the rotation number of the circle diffeomorphism $g$.  We say that $\psi$ has the {\it intersection property} if for any continuous  $\g:\R/\Z\to \R/\Z\times ]e^{-1},1[$, the curve ${\rm Gr}_{\g}:=\{(x,\g(x)):x\in\R/\Z\}$ intersects its image $\psi({\rm Gr}_{\g})$. Note the following important fact:  If $\psi$ has the intersection property, any translated graph by $\psi$ is {\it  invariant. }

\medskip
We state the  Translated Curve  Theorem by R\"ussmann \cite{Russmann}(which implies the Invariant Curve Theorem by Moser \cite{Moser62}):
\begin{theo}[R\"ussmann, \cite{Russmann}] \label{theo:moser} There exists  $k_{0}\in\N$ for which the following holds. Let $k\geq k_{0}$, $C,\mu>0$ and  $l:\R/\Z\to\R$ a $C^k$ map satisfying the {\it  twist condition} 
\be \min_{y} |\pa_{y}l(y)|>\mu>0   \qquad  \textrm{and}\qquad   \|l\|_{C^{k_{0}}}\leq C\label{6.33}\ee and define $$\psi_{0}:(x,y)\mapsto (x+l(y),y).$$
There exists $\e_{0}=\e_{0}(C,\mu)>0$ such that,  for any $C^k$ diffeomorphism $$\psi:\R/\Z\times ]e^{-1},1[\to\R/\Z\times\R$$   satisfying 
\be \|\psi-\psi_{0}\|_{C^{k_{0}}}<\e_{0},\label{*6.54}\ee  the diffeomorphism $\psi$ admits a set of positive Lebesgue measure of $C^{k-k_{0}}$ translated graphs contained in $(\R/\Z)\times ]e^{-3/4},e^{-1/4}[$. Moreover, all these translated graphs have Diophantine rotation numbers\footnote{They are in a fixed Diophantine class $DC(\kappa,\tau)$  (the exponent is $\tau$ and the constant $\kappa$)  that can be prescribed in advance once $\mu$ is fixed ($k_{0}$ then depends on $\tau$ and   $\e_{0}$ on $\kappa$ and $\tau$).\label{foot11}}.
\end{theo}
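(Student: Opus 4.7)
The plan is to prove Theorem~\ref{theo:moser} by a standard KAM iteration scheme parametrized by the Diophantine rotation number $\alpha$ of the curve sought. By the twist condition $|\partial_{y}l|>\mu$, the map $l$ is a $C^{k}$ diffeomorphism from the $y$-range onto its image; I fix a Diophantine exponent $\tau>2$ and a small constant $\kappa>0$, and let $\mathcal{D}\subset l({]e^{-3/4},e^{-1/4}[})$ denote the set of values lying in the Diophantine class $DC(\kappa,\tau)$. For $\kappa$ sufficiently small relative to $\mu$, the set $\mathcal{D}$ has positive one-dimensional Lebesgue measure. For each $\alpha\in\mathcal{D}$, the flat circle $\{y=y_{\alpha}\}$ with $y_{\alpha}=l^{-1}(\alpha)$ is $\psi_{0}$-invariant and carries the rigid rotation $R_{\alpha}:x\mapsto x+\alpha$; this is the model graph to be perturbed.

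Given such an $\alpha$, I seek a perturbed graph ${\rm Gr}_{\gamma}$ and a translation $t\in\R$ solving $\psi(x,\gamma(x))=(h(x),t+\gamma(h(x)))$ with $h$ a circle diffeomorphism conjugate to $R_{\alpha}$. Starting from the approximate solution $(\gamma,t)=(y_{\alpha},0)$, I apply a Newton iteration: at each step the equation is linearized around the current approximation, producing cohomological equations of type $\varphi\circ R_{\alpha}-\varphi=\mathrm{datum}$ on $\R/\Z$ whose solution loses roughly $\tau+1$ derivatives because of the Diophantine small divisors. The twist hypothesis $|\partial_{y}l|>\mu$ is what enables the inversion of the complementary part of the linearized operator: a vertical displacement of the base curve alters the induced rotation number at first order via $\partial_{y}l(y_{\alpha})\ne 0$, so both $t$ and the rotation number of $h$ can be fine-tuned simultaneously. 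Loss of regularity is compensated by Moser-type smoothing operators (or Jackson approximation in the $C^{k}$ category), and quadratic convergence then ensures the iteration converges in $C^{k-k_{0}}$-norm for some $k_{0}=k_{0}(\tau)$, provided the smallness bound $\|\psi-\psi_{0}\|_{C^{k_{0}}}<\varepsilon_{0}(C,\mu)$ holds.

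For the positive-measure statement, one verifies that the KAM estimates are uniform for $\alpha$ varying in $\mathcal{D}$ (only $\kappa,\tau$ enter the small-divisor bounds), and that the resulting map $\alpha\mapsto\gamma_{\alpha}$ is Whitney-$C^{1}$ from $\mathcal{D}$ into a suitable function space. The union $\bigcup_{\alpha\in\mathcal{D}}{\rm Gr}_{\gamma_{\alpha}}$ then sweeps out a set whose intersection with each vertical fiber is bi-Lipschitz to $l^{-1}(\mathcal{D})$; since $l$ is bi-Lipschitz by the twist hypothesis and $\mathcal{D}$ has positive measure, Fubini yields that the union has positive planar Lebesgue measure. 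Shrinking $\varepsilon_{0}$ if necessary ensures that all the curves remain inside $(\R/\Z)\times{]e^{-3/4},e^{-1/4}[}$, as required.

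The main obstacle is the classical KAM trade-off between the loss of derivatives at each Newton step (from the unbounded inverse of the small-divisor operator) and the need for the correction to keep quadratic smallness in an only mildly weaker norm; reconciling the two is precisely what forces the integer $k_{0}=k_{0}(\tau)$ and the drop from $C^{k}$ to $C^{k-k_{0}}$ in the conclusion. A secondary technical point is the Whitney-regular dependence on the parameter $\alpha$, without which one would obtain only a Cantor-like family of translated curves rather than a set of positive two-dimensional Lebesgue measure. Apart from these standard KAM difficulties, the argument essentially reproduces R\"ussmann's proof in \cite{Russmann}, with the advantage over Moser's original argument \cite{Moser62} that no intersection property is needed.
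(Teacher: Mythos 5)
The paper does not prove this theorem: it is quoted from R\"ussmann \cite{Russmann} (as the translated--curve strengthening of Moser's invariant curve theorem \cite{Moser62}), so there is no internal proof to compare your attempt against. Your outline is a faithful sketch of the standard KAM proof of that result: parametrization by rotation numbers $\alpha$ in a fixed class $DC(\kappa,\tau)$ inside the image of $l$, a Newton scheme whose linearization reduces to cohomological equations over $R_{\alpha}$ losing roughly $\tau+1$ derivatives, the twist condition $|\partial_{y}l|>\mu$ used to invert the complementary (two-dimensional) part of the linearized operator by adjusting the translation $t$ and the rotation number, smoothing operators to recover the loss of derivatives in the $C^{k}$ category, and Whitney-$C^{1}$ dependence on $\alpha$ combined with Fubini and the bi-Lipschitz character of $l$ to get positive planar measure. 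You also correctly identify why the translated-curve version is the right one here: no intersection property is assumed, which is exactly why the paper must separately argue (via the invariant measure $\bar\pi_{\e,y_{*}}$ in Section \ref{s:6.3}) that the translated graphs it obtains are in fact invariant. That said, what you have written is a plan rather than a proof: none of the quantitative estimates (convergence of the modified Newton iteration in finite differentiability, uniformity of the bounds in $\alpha$, the Whitney estimates) is carried out, and those estimates are essentially the entire content of \cite{Russmann}. As a reconstruction of the strategy behind the cited theorem it is correct; as a self-contained proof it is incomplete, and for the purposes of this paper the appropriate step is simply to cite R\"ussmann, as the authors do.
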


\subsection{The rescaled diffeomorphism $\mathring{f}_{\e,n}$}\label{sec:6.2}
Let $\bar f_{\e}$ be the renormalized map defined in Section {\ref{sec:5.4}} and  define $u_{\e},v_{\e}$ by
$$\bar f_{\e}(x,y)=(x+u_{\e}(x,y),y+v_{\e}(x,y)).$$
Since $\bar f_{\e}=\bar\eta_{\e}\circ T_{l_{\e}}$ ({\it cf.} (\ref{eq:prop:5.4})) one has using (\ref{labelab})
\begin{align*}u_{\e}(x,y)&=l_{\e}(y)+a_{\e}(x+l_{\e}(y),y)\\
v_{\e}(x,y)&=yb_{\e}(x+l_{\e}(y),y).
\end{align*}

\begin{figure}
\begin{center}

\begin{pspicture}(-1,-1)(10,5)
\psset{xunit=1.7cm,yunit=1.2cm}

\psline(1,0)(1,3.5)
\psline(5,0)(5,3.5)
\psline(0,0)(6,0)
\psline(1,1.5)(5,1.5)
\psline(1,2.5)(5,2.5)
\psline[linestyle=dashed](1,1.75)(5,1.75)
\psline[linestyle=dashed](1,2.25)(5,2.25)

\def\h{ x 1  sub 180 mul  sin 0.2 mul
}
\def\ha{\h 1.5 add}
\def\hb{\h 2.5 add}

\psplot{1}{5}{\ha}
\psplot{1}{5}{\hb}

\pscustom[fillstyle=hlines,hatchwidth=.1pt,linestyle=none]{%
\psplot{1}{5}{\hb}
\psline(5,2.5)(5,1.5)
\psplot{5}{1}{\ha}
\psline(1,1.5)(1,2.5)}

\pscustom[linestyle=none]{%
\psline(1,2.5)(5,2.5)
\psline(5,2.5)(5,1.5)
\psline(5,1.5)(1,1.5)
\psline(1,1.5)(1,2.5)}

\rput(5.7,-0.5){$\R/\Z$}
\rput(.8,3.5){$\R$}
\rput(.5,1.5){$e^{-(n+1)}$}
\rput(.5,2.5){$e^{-n}$}
\rput(1,-0.5){$0$}
\rput(5,-0.5){$1$}
\rput(3,0.5){$\bar f_{\e}(\R/\Z\times [e^{-n+1},e^{-n}]) $}
\psline[linewidth=0.1pt](3,1)(3.5,2)
\end{pspicture}
\end{center}
\caption{The diffeomorphism $\bar f_{\e}$ on $\R/\Z\times [e^{-(n+1)},e^{-n}]$. }\label{fig:4}
\end{figure}
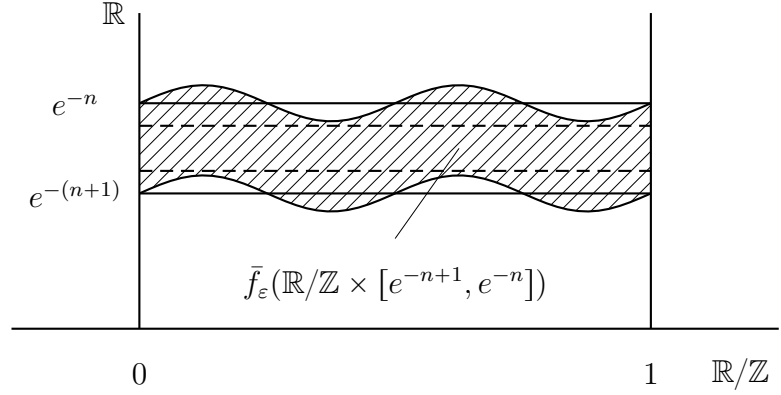

Now, let  $n\in\N^*$ large enough so that 
\be ]e^{-(n+1)},e^{-n}[\subset ]0,\d_{}[\label{condnepsilon}\ee
(the $\d_{}$ of (\ref{ee5.28}))
and introduce the rescaled $C^{k}$ diffeomorphism $\mathring{f}_{\e,n}$  defined on the annulus $\R/\Z\times ]e^{-1},1[$  by 
\be \mathring{f}_{\e,n}\mathop{=}_{defin.}\Lambda_{e^n}\circ \bar f_{\e}\circ \Lambda_{e^n}^{-1}.\label{6.34}
\ee
where $\Lambda_{e^n}:(x,y)\mapsto (x,e^n y)$. 
 Let us denote $$\mathring{f}_{\e,n}(x,y)=(x+u_{\e,n}(x,y), y+v_{\e,n}(x,y)).$$

A computation shows that:
\be \begin{cases}u_{\e,n}(x,y)&=l_{\e,n}(y)+a_{\e}(x+l_{\e,n}(y),e^{-n}y)\\
v_{\e,n}(x,y)&=yb_{\e}(x+l_{\e,n}(y),e^{-n}y)
\end{cases}\label{6.42}\ee 
where 
\be l_{\e,n}(y)=l_{\e}(e^{-n}y).\label{6.57**}
\ee

\medskip

We can now state the following important proposition the proof of which occupies the next subsection:
\begin{prop}\label{cor:6.2}Assume that $k\geq k_{0}+2$ ($k$ is the regularity in conditions \ref{iii0}-\ref{iii3} and $k_{0}$ is the one of Theorem \ref{theo:moser}). There exists $\e_{1}>0$ such that the following holds. If $|\e|\leq \e_{1}$ and $n\gg 1$, 
$\mathring{f}_{\e,n}$ admits  a set of positive Lebesgue measure of invariant $C^{k-k_{0}-2}$-graphs  in $\R/\Z\times ]e^{-1},1[$.
\end{prop}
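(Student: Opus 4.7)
The plan is to apply R\"ussmann's Translated Curve Theorem (Theorem \ref{theo:moser}) to $\psi := \mathring{f}_{\e,n}$ with integrable model $\psi_{0}(x,y) := (x + l_{\e,n}(y), y)$, then to upgrade the translated graphs to genuinely invariant ones by establishing the intersection property for $\mathring{f}_{\e,n}$ from the invariant probability measure produced in Proposition \ref{prop:main}.

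\textbf{Step 1: uniform twist for $l_{\e,n}$.} From (\ref{5.27}) and $q'_{\e}(0) = \lambda > 0$, a direct computation gives
\[
l'_{\e,n}(y) = e^{-n} l'_{\e}(e^{-n}y) = -\frac{1}{\lambda y} + O(e^{-n}),
\]
so that for $y \in ]e^{-1},1[$ and $n$ large one has $|l'_{\e,n}(y)| \geq 1/(2\lambda e) =: \mu$. Iterated application of the chain rule shows that $|l^{(j)}_{\e,n}(y)| \leq C_{j} /y^{j}$ on $]e^{-1},1[$, \emph{uniformly} in $n$ and in $\e$ small. Thus the twist hypothesis (\ref{6.33}) is met with constants $(C,\mu)$ independent of $n$ and $\e$.

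\textbf{Step 2: smallness of the perturbation.} From (\ref{6.42}),
\[
\mathring{f}_{\e,n}(x,y) - \psi_{0}(x,y) = \bigl( a_{\e}(x+l_{\e,n}(y),e^{-n}y),\ y\, b_{\e}(x+l_{\e,n}(y),e^{-n}y) \bigr).
\]
By Proposition \ref{prop:main}, $\|a_{\e}\|_{C^{k}}, \|b_{\e}\|_{C^{k-1}} \to 0$ as $\e \to 0$. Differentiating the right-hand side in $y$ up to order $k_{0}$ produces factors of $l^{(j)}_{\e,n}$ (bounded uniformly in $n$ by Step 1) and factors $e^{-nj}$ from the second slot (which only help). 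Hence
\[
\|\mathring{f}_{\e,n} - \psi_{0}\|_{C^{k_{0}}} \leq C \bigl(\|a_{\e}\|_{C^{k_{0}}} + \|b_{\e}\|_{C^{k_{0}}}\bigr),
\]
uniformly in $n$ large. Choosing $k \geq k_{0}+2$ and $\e_{1} > 0$ small enough, the smallness threshold $\e_{0}(C,\mu)$ of Theorem \ref{theo:moser} is satisfied.

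\textbf{Step 3: intersection property and conclusion.} By Proposition \ref{prop:main}, $\bar f_{\e}$ preserves a probability measure $\bar\pi_{\e,y_{*}}$ with positive density on $\R/\Z \times ]0,c[$. The pushforward $(\Lambda_{e^{n}})_{*}\bar\pi_{\e,y_{*}}$ is an absolutely continuous, positive-density measure preserved by $\mathring{f}_{\e,n}$. If a continuous graph $\Gamma_{\gamma} \subset \R/\Z \times ]e^{-1},1[$ and its image $\mathring{f}_{\e,n}(\Gamma_{\gamma})$ were disjoint, the region between them would have positive measure and its forward iterates would be pairwise disjoint, contradicting finiteness of the measure. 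Hence $\mathring{f}_{\e,n}$ has the intersection property, and the translated graphs furnished by Theorem \ref{theo:moser} are \emph{invariant}. The regularity $C^{k-k_{0}-2}$ is then obtained by tracking the one-derivative loss inherited from $b_{\e} \in C^{k-1}$ in the vertical component of $\bar\eta_{\e}$, plus one further derivative lost in the chain-rule estimates used in Step 2.

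\textbf{Main obstacle.} The crux is obtaining \emph{$n$-uniform} $C^{k_{0}}$-bounds on $l_{\e,n}$ and on the rescaled perturbation. The map $l_{\e}$ has a logarithmic singularity at $y = 0$, so its derivatives blow up like $y^{-j}$; the point is that after the rescaling $y \mapsto e^{-n}y$ and restriction to $y \in ]e^{-1},1[$, these singularities are exactly cancelled, producing estimates independent of $n$. Verifying this cancellation carefully, in tandem with the smallness of $\bar\eta_{\e}-\mathrm{id}$ coming from (\ref{5.28}), is the technical heart of the proof.
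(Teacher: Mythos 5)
Your Steps 1 and 2 reproduce the paper's argument (Lemma \ref{lemma:6.1} and \S\ref{sec:6.3.2}): the uniform twist comes from $\pa_{y}l_{\e,n}(y)=-1/(\l y)+O(e^{-n})$ on $]e^{-1},1[$, the $C^{k_{0}}$--closeness of $\mathring{f}_{\e,n}$ to $(x,y)\mapsto(x+l_{\e,n}(y),y)$ follows from (\ref{5.28})--(\ref{labelab}) and (\ref{6.42}) uniformly in $n$, and the regularity count $C^{(k-2)-k_{0}}$ is the same. The "main obstacle" you identify (cancellation of the $y^{-j}$ singularities after the rescaling $y\mapsto e^{-n}y$) is indeed the point of Lemma \ref{lemma:6.1}.

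The gap is in Step 3. The argument "the region between $\Gamma_{\gamma}$ and $\mathring{f}_{\e,n}(\Gamma_{\gamma})$ would have pairwise disjoint forward iterates" presupposes that these iterates are defined and that the ordering of the successive image curves propagates; but $\mathring{f}_{\e,n}$ is the conjugate of $\bar f_{\e}:\R/\Z\times]0,\d[\to\R/\Z\times]0,c[$ and does \emph{not} map the annulus $\R/\Z\times]e^{-1},1[$ (nor, after undoing the rescaling, does $\bar f_{\e}$ map $\R/\Z\times]0,c[$) into itself, so already the second iterate of that region need not make sense. The paper avoids iteration altogether: it pulls the translated graph back to $\bar\gamma=\Lambda_{e^{n}}^{-1}(\mathring{\gamma})$, which lies at height $\asymp e^{-n}$, well inside the domain $\R/\Z\times]0,\d[$ of $\bar f_{\e}$; it takes $A$ to be the region between the \emph{bottom boundary} $\R/\Z\times\{0\}$ and $\bar\gamma$ (not the region between the curve and its image), observes that $A\subset\bar f_{\e}(A)$ when the translation $t$ is $\geq 0$, and derives the contradiction from a \emph{single} application of the invariance of the probability measure $\bar\pi_{\e,y_{*}}$ of Proposition \ref{prop:main}: $\bar\pi_{\e,y_{*}}(\bar f_{\e}(A))=\bar\pi_{\e,y_{*}}(A)$, whereas $t>0$ would force $\bar f_{\e}(A)\setminus A$ to contain a nonempty open set of positive measure. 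Your invariant-measure idea is the right mechanism, and note that you do not need the intersection property for arbitrary continuous graphs: it suffices to treat the translated graphs actually produced by Theorem \ref{theo:moser}, which lie in $\R/\Z\times]e^{-3/4},e^{-1/4}[$ and hence pull back to curves to which the one-step region argument applies.
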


\smallskip

\subsection{Proof of Proposition \ref{cor:6.2} }\label{s:6.3}
 \subsubsection{Twist condition for $l_{\e,n}$}\label{sec:6.3.1}
\begin{lem}\label{lemma:6.1}There exist  $C,\mu>0$ such that for any $\e$ small enough and any $n\gg1$  the map $l_{\e,n}$ satisfies the twist condition (\ref{6.33}) provided  $k\geq k_{0}+1$.
\end{lem}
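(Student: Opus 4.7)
The plan is to exploit the explicit form of $l_\e$ given by (5.27) together with the scale‐invariance of the logarithm. Write $l_{\e,n}(y)=l_\e(e^{-n}y)$ with
\begin{equation*}
l_\e(z)=\frac{\sigma_{0,N}(z)}{q'_\e(z)}-\frac{\ln z}{q'_\e(z)}\mod \Z.
\end{equation*}
By condition \ref{iii2}, $q_\e\in C^{k+1}$ with $q'_\e(z)=\l+O(z)$ uniformly in $\e$ small, so $1/q'_\e(z)=\l^{-1}+O(z)$ and $\sigma_{0,N}$ is $C^k$ (Lemma \ref{lemma:4.3}). Thus the first term $A_\e(z):=\sigma_{0,N}(z)/q'_\e(z)$ and all its derivatives up to order $k$ are bounded near $0$, uniformly in $\e$, while the second term $B_\e(z):=-\ln z/q'_\e(z)$ carries all the singular behaviour.

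First I would establish the twist. Direct computation gives $B'_\e(z)=-1/(zq'_\e(z))+\ln z\cdot q''_\e(z)/q'_\e(z)^2=-1/(\l z)+O(\ln z)$ as $z\to 0$. Therefore
\begin{equation*}
\partial_y l_{\e,n}(y)=e^{-n}\,l'_\e(e^{-n}y)=-\frac{1}{\l y}+e^{-n}\,O(n)+O(e^{-n}),
\end{equation*}
uniformly for $y\in ]e^{-1},1[$ and $\e$ small. Since $1/y\in\,]1,e[$ on that interval, for $n$ large the right‐hand side has absolute value bounded below by $\mu:=1/(2\l)>0$, proving the twist condition.

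Next I would deal with the $C^{k_0}$‐norm bound. The key observation is that if $j\ge 1$ and $z$ is near $0$, then $l_\e^{(j)}(z)=O(z^{-j})$ uniformly in $\e$ small: indeed $(-\ln z)^{(j)}$ contributes $(-1)^j(j-1)!\,z^{-j}$, and after multiplication by $1/q'_\e(z)$ and applying Leibniz using the bounded quantities $(1/q'_\e)^{(i)}$ (which requires $q_\e\in C^{k_0+1}$, hence $k\ge k_0$), all remaining terms are of order $O(z^{-j+1}|\ln z|)$; the contribution of $A_\e$ is $O(1)$. Evaluating at $z=e^{-n}y$ and multiplying by $e^{-nj}$ coming from the chain rule,
\begin{equation*}
\partial_y^j l_{\e,n}(y)=e^{-nj}l_\e^{(j)}(e^{-n}y)=O(y^{-j})+o(1),
\end{equation*}
uniformly in $n$ and in $\e$ small. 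On $y\in]e^{-1},1[$ we therefore get $|\partial_y^j l_{\e,n}(y)|\le C_j$ for $1\le j\le k_0$, and $|l_{\e,n}(y)|$ itself is bounded (mod $\Z$), so summing gives $\|l_{\e,n}\|_{C^{k_0}}\le C$ uniformly in $n$, provided $l_\e$ is at least $C^{k_0+1}$, i.e. $k\ge k_0+1$.

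The scale‐invariance of $-\ln$ (under $y\mapsto e^{-n}y$ it shifts by the constant $n$, which drops out of derivatives and is absorbed mod $\Z$) is what makes the bounds uniform in $n$; no obstacle arises beyond bookkeeping the regularity required of $q_\e$ and $\sigma_{0,N}$, which is exactly the condition $k\ge k_0+1$ stated in the lemma.
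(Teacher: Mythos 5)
Your proposal is correct and follows essentially the same route as the paper: both arguments start from formula (\ref{5.27}), use $-\ln(e^{-n}y)=n-\ln y$ together with $q'_{\e}(e^{-n}y)=\l+O(e^{-n})$ to identify the leading term $-\ln y/\l$ of $\partial_y l_{\e,n}$, and check that the remaining contributions are $o(1)$ uniformly on $]e^{-1},1[$, with the same regularity bookkeeping $k\geq k_0+1$. The only cosmetic difference is that you bound $l_\e^{(j)}(z)=O(z^{-j})$ and rescale, whereas the paper first rewrites $l_{\e,n}$ as $(\s_{0,N}(0)+n)/\l-\ln y/\l+\th_{\e,n}$ with $\|\th_{\e,n}\|_{C^{k-1}}=O(e^{-n})$.
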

\begin{proof}Using (\ref{5.27}), (\ref{6.57**}) we have
\begin{align*}l_{\e,n}(y)&=l_{\e}(e^{-n}y)\\
&=\frac{\sigma_{0,N_{}}(e^{-n}y)}{q'_{\e}(e^{-n}y)}+\frac{n}{q'_{\e}(e^{-n}y)}-\frac{\ln y}{q'_{\e}(e^{-n}y)} \quad \mod\ \Z\\
&=\frac{\s_{0,N_{}}(0)+n}{\l}-\frac{\ln y}{\l}+\th_{\e,n}(y)\quad\mod\ \Z
\end{align*}
where
$$ \|\th_{\e,n}\|_{C^{k-1}([e^{-1},1])}=O(e^{-n});$$
this last inequality is a consequence of the fact that $q_{\e}(s)=\l s+O(s^2)$ is continuous w.r.t. $\e$  ({\it cf.} condition \ref{iii1}) and of the fact that $\s_{0,N_{}}$ is $C^k$ ({\it cf.} Lemma \ref{lemma:4.3}).
In particular for some $C_{k}>0$ (depending on $\l$)
$$\| l_{\e,n} \|_{C^{k-1}}\leq C_{k}$$ and    since $\pa_{y} l_{\e,n}(y)=-1/ (\l y)+\pa_{y}\theta_{\e,n}(y)$ and $y\in ]e^{-1},1[$
$$|\pa_{y} l_{\e,n}(y)|\geq 1/(2\l) .$$
Hence (\ref{6.33})   holds uniformly in $\e,n$  with $C=C_{k_{0}+1}$ and $\mu=1/(2\l)$ as soon as $n$ is large enough.
\end{proof}

\subsubsection{ $\mathring{f}_{\e,n}$ is close to a twist}\label{sec:6.3.2}
We observe that from  (\ref{5.28})-(\ref{labelab}), (\ref{6.42}) and Lemma \ref{lemma:6.1}
one has uniformly in $n$
\be \lim_{\e\to0}\max(\|u_{\e,n}-l_{\e,n}\|_{C^{k-2}},\|v_{\e,n}\|_{C^{k-2}})=0.\label{6.43}\ee
In particular, if $n$ is large enough, inequality (\ref{*6.54}) is satisfied if $k\geq k_{0}+2$ with $\psi=\mathring{f}_{\e,n}$ and $\psi_{0}:(x,y)\mapsto (x+l_{\e,n}(y),y)$. 

\hfill $\Box$

\medskip  We see from \ref{sec:6.3.1}, \ref{sec:6.3.2} that,  if  
$$|\e|\leq \e_{1}\mathop{=}_{defin.} \e_{0}(C_{k_{0}+1},1/(2\l))$$
 and $n\gg1$,  the assumptions of Theorem \ref{theo:moser} are then satisfied by $\mathring{f}_{\e,n}$ with $k-2$ in place of $k$. Under these conditions, there thus exists a set $\mathring{\cG}_{\e,n}$ of $C^{k-k_{0}-2}$ $\mathring{f}_{\e,n}$-translated  graphs the union of which covers a set of positive Lebesgue measure in $(\R/\T)\times ]e^{-3/4},e^{-1/4}[$. 
We just have to check that these translated graphs are indeed invariant.
\subsubsection{$\mathring{f}_{\e,n}$-translated graphs are invariant}
Let $\mathring{\g}\subset (\R/\T)\times ]e^{-3/4},e^{-1/4}[$ be a $\mathring{f}_{\e,n}$-translated graph: $\mathring{f}_{\e,n}(\mathring{\g})=\mathring{\g}+(0,t)$ for some $t\in\R$. We shall prove that $t=0$. We can without loss of generality assume that $t\geq 0$ (the case $t\leq 0$ is treated in a similar way).

Formula (\ref{6.43}) shows that if $n\gg 1$ one has $\mathring{f}_{\e,n}(\mathring{\g})\subset (\R/\T)\times ]e^{-1},1[$. From the conjugation relation (\ref{6.34}) we see  that ({\it cf.} (\ref{condnepsilon})) 
$$\bar \g:=\Lambda_{e^n}^{-1}(\mathring{\g})\subset (\R/\Z)\times ]e^{-n-3/4}, e^{-n-1/4}[\subset (\R/\Z)\times ]0,\d[$$ 
is a $\bar f_{\e}$-translated graph such that 
$$\bar f_{\e}(\bar \g)=\bar\g+(0,e^{-n}t)\subset (\R/\Z)\times ]e^{-(n+1)},e^{-n}[\subset (\R/\Z)\times ]0,\d[.$$
Let $A$ be the open domain of $(\R/\Z)\times ]0,c[$ between $(\R/\Z)\times \{0\}$ and $\bar \g$. Since $t\geq 0$, one has $A\subset \bar f_{\e}(A)\subset (\R/\Z)\times ]0,c[$.

Assume by contradiction that $t>0$; then the set  $\bar f_{\e}(A)\setminus A$ contains a nonempty open set. 
We have seen  ({\it cf.} Proposition \ref{prop:main}) that $\bar f_{\e}$ preserves a probability measure $\bar \pi_{\e,y_{*}}$  with positive density defined on  $(\R/\Z)\times ]0,c[$, so  $\bar \pi_{\e,y_{*}}(\bar f_{\e}(A)\setminus A)>0$. But this contradicts the invariance of $\bar \pi_{\e,y_{*}}$ by $\bar f_{\e}$.

\hfill $\Box$

\medskip The proof of Proposition  \ref{cor:6.2}  is complete. \hfill $\Box$

\subsection{Invariant curves for $\bar f_{\e}$}
We can now state:
\begin{theo} \label{prop:6.3}Let $k\geq k_{0}+2$ and $|\e|\leq \e_{1}$.  There exists $\nu_{}\in ]0,\d_{}[$ such that for any $\nu\in]0,\nu_{}[$ there exists a set $\bar \cG_{\e,\nu}$ of $C^{k-k_{0}-2}$, $\bar f_{\e}$-invariant graphs contained in $(\R/\Z)\times ]e^{-1}\nu,\nu[$ such that 
$${\rm Leb}_{\R^2}\biggl( \bigcup_{\bar\g\in\bar\cG_{\e,\nu}} \bar\g\biggr)>0.$$
\end{theo}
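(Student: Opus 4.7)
The plan is to reduce Theorem~\ref{prop:6.3} to Proposition~\ref{cor:6.2} by promoting the rescaling parameter of Section~\ref{sec:6.2} from the integer sequence $n\in\N^*$ to a continuous scale $\nu>0$ (restricted to some interval $]0,\nu_{}[$ with $\nu_{}<\d_{}$). For any such $\nu$ set $\Lambda_{\nu}(x,y):=(x,\nu y)$ and
$$\mathring{f}_{\e,\nu} := \Lambda_{\nu}^{-1}\circ \bar{f}_{\e}\circ \Lambda_{\nu},$$
which is a $C^{k}$ diffeomorphism defined on $\R/\Z\times\,]e^{-1},1[$ whenever $\nu<\d_{}$. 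When $\nu=e^{-n}$ this reduces to the map $\mathring{f}_{\e,n}$ of (\ref{6.34}). The key geometric observation is that, since $e^{-3/4}>e^{-1}$ and $e^{-1/4}<1$, every $\mathring{f}_{\e,\nu}$-invariant graph contained in $\R/\Z\times\,]e^{-3/4},e^{-1/4}[$ pulls back through $\Lambda_{\nu}$ to a $\bar{f}_{\e}$-invariant graph in $\R/\Z\times\,]e^{-3/4}\nu,e^{-1/4}\nu[\subset \R/\Z\times\,]e^{-1}\nu,\nu[$, i.e.~in the annulus demanded by the statement.

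The second step is to verify that the analysis of Section~\ref{s:6.3} goes through uniformly in this continuous parameter. Writing $\mathring{f}_{\e,\nu}(x,y)=(x+u_{\e,\nu}(x,y),y+v_{\e,\nu}(x,y))$, the formulas (\ref{6.42}) hold verbatim with $\nu$ in place of $e^{-n}$, and in particular $l_{\e,\nu}(y):=l_{\e}(\nu y)$. Adapting the computation of Lemma~\ref{lemma:6.1} gives
$$l_{\e,\nu}(y)=\frac{\s_{0,N_{}}(0)+|\ln\nu|}{\l}-\frac{\ln y}{\l}+\th_{\e,\nu}(y)\quad \mod\ \Z,$$
with $\|\th_{\e,\nu}\|_{C^{k-1}([e^{-1},1])}=O(\nu)$. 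Hence for $\nu$ small enough the twist condition (\ref{6.33}) holds with $C=C_{k_{0}+1}$ and $\mu=1/(2\l)$, uniformly in $|\e|\le\e_{1}$. The estimate (\ref{6.43}) likewise remains valid, so $\mathring{f}_{\e,\nu}$ is a $C^{k-2}$-small perturbation of the integrable twist $(x,y)\mapsto(x+l_{\e,\nu}(y),y)$. Theorem~\ref{theo:moser} (R\"ussmann) therefore supplies, for each such $\nu$, a set $\mathring{\cG}_{\e,\nu}$ of $C^{k-k_{0}-2}$ translated graphs for $\mathring{f}_{\e,\nu}$ in $\R/\Z\times\,]e^{-3/4},e^{-1/4}[$ whose union has positive Lebesgue measure.

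To promote the translated graphs to genuinely invariant ones I would reproduce the argument of \S\ref{s:6.3}: since $\bar{f}_{\e}$ preserves the positive-density probability measure $\bar\pi_{\e,y_{*}}$, its push-forward $(\Lambda_{\nu}^{-1})_{*}\bar\pi_{\e,y_{*}}$ is a positive-density $\mathring{f}_{\e,\nu}$-invariant measure, and a nontrivial translation of a graph would force $\mathring{f}_{\e,\nu}(A)\setminus A$ to have positive measure, contradicting invariance. Consequently $\bar\cG_{\e,\nu}:=\Lambda_{\nu}(\mathring{\cG}_{\e,\nu})$ is a set of $C^{k-k_{0}-2}$ $\bar{f}_{\e}$-invariant graphs contained in $\R/\Z\times\,]e^{-1}\nu,\nu[$, and since $\Lambda_{\nu}$ has constant Jacobian $\nu>0$, its union has positive Lebesgue measure, which concludes the proof.

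The only point that requires care is checking that the estimates of Lemma~\ref{lemma:6.1} and (\ref{6.43}) depended on $n$ only through the quantity $e^{-n}\to0$, so that the substitution $e^{-n}\leadsto\nu$ preserves all uniform bounds; this is essentially automatic. I stress that a naive approach using only integer $n$ would fail: the annuli $\R/\Z\times\,]e^{-n-3/4},e^{-n-1/4}[$ leave gaps of logarithmic length $1/2$ and therefore do not suffice to cover $]e^{-1}\nu,\nu[$ for \emph{every} $\nu\in\,]0,\nu_{}[$, which is why the continuous rescaling is needed.
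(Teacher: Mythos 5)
Your proof is correct and follows the same strategy as the paper's: rescale $\bar f_{\e}$ to an annulus of unit size, apply Proposition~\ref{cor:6.2} (R\"ussmann's theorem plus the measure argument promoting translated graphs to invariant ones), and transport the invariant graphs back by the rescaling. The one genuine difference is that the paper keeps the discrete rescaling $\Lambda_{e^n}$ of (\ref{6.34}), choosing the integer $n$ with $]e^{-(n+1)},e^{-n}[\subset]0,\nu[$, whereas you pass to the continuous family $\Lambda_{\nu}$. Your closing remark is a fair observation about the discrete version: the graphs produced by Theorem~\ref{theo:moser} lie in $(\R/\Z)\times]e^{-3/4},e^{-1/4}[$, hence after transport in $(\R/\Z)\times]e^{-n-3/4},e^{-n-1/4}[$, and for that band to sit inside $]e^{-1}\nu,\nu[$ one needs $-\ln\nu$ to be within $1/4$ of an integer; for general $\nu$ the discrete choice only yields graphs in $(\R/\Z)\times]e^{-2}\nu,\nu[$. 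This is harmless for the way Theorem~\ref{prop:6.3} is used in Section~\ref{sec:8}, where only containment in $]0,\nu[$ at heights comparable to $\nu$ matters, but your continuous rescaling does deliver the constant $e^{-1}$ literally as stated, at the cost of re-checking that Lemma~\ref{lemma:6.1} and (\ref{6.43}) are uniform in the continuous parameter --- which, as you say, is automatic since every estimate depends on $n$ only through $e^{-n}\to 0$. One microscopic quibble: the error term $\th_{\e,\nu}$ is $O(\nu|\ln\nu|)$ rather than $O(\nu)$, because of the contribution $|\ln\nu|/q'_{\e}(\nu y)-|\ln\nu|/\l$; the same slip occurs in the paper's Lemma~\ref{lemma:6.1} and is immaterial, since only $\th_{\e,\nu}\to0$ in $C^{k-1}$ is used.
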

\begin{proof}
We choose $n$ so that 
\be ]e^{-(n+1)},e^{-n}[\subset ]0,\nu[\label{n6.60}\ee
and we observe that when  $\nu\to 0$ one has  $n\to\infty$. 
Define
$$ \mathring{f}_{\e,n}=\Lambda_{e^n}\circ \bar f_{\e}\circ \Lambda_{e^n}^{-1}.$$
By Proposition \ref{cor:6.2}, there exists $\nu_{1}>0$ such that  if $\nu\in ]0,\nu_{1}[$ ($n$ satisfying (\ref{n6.60}) is then large enough), the diffeomorphism $\mathring{f}_{\e,n}$  admits  $C^{k-k_{0}-2}$-invariant curves in $\T\times ]e^{-1},1[$ covering a set of positive Lebesgue measure; hence $\bar f_{\e,k}$ has  $C^{k-k_{0}-2}$-invariant curves in $\T\times ]e^{-1}\nu,\nu[$ covering a set of positive Lebesgue measure. 
\end{proof}
We shall denote
$$\bar\cG_{\e}=\bigcup_{\nu\in]0,\nu_{1}[}\bar \cG_{\e,\nu}.$$

\begin{rem}\label{rem:6.1}
For all $\bar\g\in\bar\cG_{\e,\nu}$ the rotation number of the circle diffeomorphism  $\bar f_{\e}\ |_{\bar \g}$ is Diophantine in a fixed Diophantine class $DC(\kappa,\tau)$ (see Footnote \ref{foot11} in the statement of  Theorem \ref{theo:moser}). 
\end{rem}

\section{Invariant curves for $f_{\e}$}\label{sec:7}
We define 
$$r=k-k_{0}-2$$
 and assume that  $|\e|\leq \e_{1}$.

\medskip
Let $\bar \g\subset (\R/\Z)\times ]0,\d[$, $\bar\g\in\bar\cG_{\e}$, be a $C^r$ invariant graph for $\bar f_{\e}:(\R/\Z)\times ]0,\d[\to (\R/\Z)\times ]0,c[$. Note that there exists $\d_{1}>0$ such that $\bar \g\subset (\R/\Z)\times ]\d_{1},\d[$.

We can view $\bar \g$ as an invariant graph 
sitting in $([0,1[\times ]0,c[)/T_{1}$ (recall $T_{1}(x,y)=(x+1,y)$).
In particular one can find a  $C^r$, 1-periodic function  
$$\bar z:\R\to ([0,1[\times ]0,\d[)/T_{1}$$ such that for all $t$,  $\frac{d}{dt}\bar z(t)\ne 0$ and 
$$\bar \gamma= \bar z([0,1[),\qquad \bar z(0)\in \{0\}\times ]0,c[\qquad \lim_{t\to 1-}\bar z(t)=T_{1}(\bar z(0))\in \{1\}\times ]0,c[.$$

Let 
$$\hat \gamma=h_{\e}^{-1}(\bar\gamma)$$ 
where $h_{\e}$ was defined in Lemma \ref{lemma:5.2}.
 Since $\bar f_{\e}=h_{\e}\circ \hat f_{\e}\circ h_{\e}^{-1}$ ({\it cf. }(\ref{ee5.28}))  we see that 
 $$\hat \g\subset h_{\e}^{-1}((\R/\Z)\times ]\d_{1},\d[)\subset \ti\cF_{\e,c_{*}y_{*}}$$ is a $C^r$ compact, connected, 1-dimensional submanifold (without boundary) of  $\ti\cF_{\e,c_{*}y_{*}}/f_{\e}$   which  is invariant by $\hat f_{\e}:\ti\cF_{\e,c_{*}y_{*}}/f_{\e}\to\ti \cF_{\e,y_{*}}/f_{\e}$ .
Moreover, the function   $$\hat z\mathop{=}_{defin.} h_{\e}^{-1}\circ\bar z:\R\to \ti \cF_{\e,c_{*}y_{*}}/f_{\e}$$ is a $C^r$, 1-periodic function and
$$\hat \gamma= \hat z([0,1[),\qquad \hat z(0)\in L_{y_{*}}\qquad \lim_{t\to 1-}\hat z(t)=f_{\e}(\hat z(0))\in f_{\e}(L_{y_{*}}). $$

The main result of this section is the following proposition.
\begin{prop}\label{prop:7.2}The set
$$\hat \G=\bigcup_{n\in\Z}f_{\e}^n(\hat \g)\subset\R^2
$$
is an invariant $C^r$ curve for $f_{\e}$: it 
 is a compact, connected, 1-dimensional $C^r$ submanifold of $\R^2$ which is invariant by $f_{\e}$.
\end{prop}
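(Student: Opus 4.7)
The plan is to verify in turn that $\hat\Gamma$ is (i) $f_{\e}$-invariant, (ii) compact, (iii) connected, and (iv) a $C^{r}$ submanifold of $\R^{2}$. Invariance is immediate: $f_{\e}(\hat\Gamma)=\bigcup_{n\in\Z}f_{\e}^{n+1}(\hat\gamma)=\hat\Gamma$.

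For compactness, the key observation is that $\hat\gamma$ is uniformly bounded away from the stable and unstable manifolds of $o$ in the ``product variable'' $xy$. Indeed, $\bar\gamma\subset(\R/\Z)\times]\delta_{1},\delta[$ and the normalization $h_{\e}=j_{\e}\circ g_{\e}\circ\Xi_{1}$ preserves the second coordinate, which in original coordinates is $v=xy$; hence $\hat\gamma\subset\{\delta_{1}\leq xy\leq\delta\}$. The estimate $n_{\e}(x,y)\asymp|\ln(xy)|/\lambda$ from \eqref{tilden} then yields a uniform upper bound $n_{\e}|_{\hat\gamma}\leq N_{2}$. Iterating the first-return identity, one has $\hat f_{\e}^{m}(p)=f_{\e}^{T_{m}(p)}(p)$ with $T_{m}(p)=\sum_{i<m}n_{\e}(\hat f_{\e}^{i}(p))$ and $T_{m+1}-T_{m}\leq N_{2}$; so every $n\geq 0$ lies in some $[T_{m},T_{m+1})$ and $f_{\e}^{n}(p)=f_{\e}^{n-T_{m}}(\hat f_{\e}^{m}(p))$ with $\hat f_{\e}^{m}(p)\in\hat\gamma$ and $0\leq n-T_{m}<N_{2}$. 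A symmetric argument using $\hat f_{\e}^{-1}$ for $n<0$ gives
$$
\hat\Gamma \subset \bigcup_{|k|<N_{2}} f_{\e}^{k}(\hat\gamma),
$$
a finite union of compact $C^{r}$ arcs, hence compact. Connectedness then follows from the parametrization $\hat z:[0,1]\to\R^{2}$ with $\hat z(0)\in L_{y_{*}}$ and $\hat z(1)=f_{\e}(\hat z(0))$: the arc $f_{\e}^{n}(\hat\gamma)$ terminates at $f_{\e}^{n+1}(\hat z(0))$, which is the starting point of $f_{\e}^{n+1}(\hat\gamma)$, so $\hat\Gamma$ is a chain of connected arcs sharing endpoints.

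The main obstacle is to verify the $C^{r}$-submanifold structure at the \emph{gluing points} $p_{n}:=f_{\e}^{n}(\hat z(0))$, $n\in\Z$, where consecutive arcs $f_{\e}^{n-1}(\hat\gamma)$ and $f_{\e}^{n}(\hat\gamma)$ meet. Away from these, $\hat\Gamma$ is locally the image of an open subarc of $\hat\gamma$ under the $C^{r}$ diffeomorphism $f_{\e}^{n}$, hence a $C^{r}$ submanifold. At $p_{1}=f_{\e}(\hat z(0))$, one invokes the differentiable structure put on $\tilde\cF_{\e,y_{*}}/f_{\e}$ in Subsection~\ref{sec:glueing}: the charts $(W_{p},\psi_{p})$ at $p\in L_{y_{*}}$ were designed precisely so that $\psi_{p}=\varphi_{p}$ on $U_{p}\cap(\cF\cup L)$ and $\psi_{p}=\varphi_{p}\circ f_{\e}^{-1}$ on $f_{\e}(U_{p})\cap\cF$. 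Read in such a chart, the $C^{r}$-smoothness of $\hat\gamma$ across the identified boundary $L_{y_{*}}\sim f_{\e}(L_{y_{*}})$ in the quotient translates, after unfolding by $f_{\e}$, into a $C^{r}$ local parametrization of $\hat\gamma\cup f_{\e}(\hat\gamma)$ through $p_{1}$; by $f_{\e}^{n-1}$-equivariance, smoothness propagates to every $p_{n}$. Finally, embeddedness (no transverse self-crossings) follows because consecutive fundamental-domain translates $f_{\e}^{n}(\tilde\cF_{\e,y_{*}})$ meet only along the shared boundary $f_{\e}^{n+1}(L_{y_{*}})$, while any coincidence of non-consecutive iterates arising from the orbital return identity $\hat f_{\e}(\hat\gamma)=\hat\gamma$ is a set-theoretic equality on an entire subarc, not a crossing.
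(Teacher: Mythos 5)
Your treatment of invariance, compactness and connectedness is sound; in particular the compactness argument via the uniform bound on the return time over $\hat\gamma$ (giving $\hat\Gamma\subset\bigcup_{|k|\le N_{2}}f_{\e}^{k}(\overline{\hat\gamma})$) is a legitimate alternative to the paper, which instead obtains compactness from the identity $\hat\Gamma=\hat Z([0,\tau])$. Your appeal to the charts of Subsection \ref{sec:glueing} for smoothness at the gluing points $p_{n}=f_{\e}^{n}(\hat z(0))$ is also exactly the paper's mechanism (Lemma \ref{lemma:Cr}).

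The genuine gap is your last sentence: the claim that any coincidence of non-consecutive iterates is ``a set-theoretic equality on an entire subarc, not a crossing'' is precisely the hard point of the proposition, and you give no argument for it. Since $\hat\Gamma$ is a closed curve covered by finitely many arcs $f_{\e}^{k}(\hat\gamma)$, non-consecutive arcs \emph{must} overlap, and a priori nothing excludes $f_{\e}^{m}(\hat\gamma)$ meeting $\hat\gamma$ transversally, or tangentially at an isolated interior point, either of which would destroy the submanifold structure. The paper devotes Lemmata \ref{lemma:injective}--\ref{lemma:7.4} to ruling this out: it parametrizes $\hat\Gamma$ by the single $C^{r}$ map $\hat Z(t)=f_{\e}^{[t]}(\hat z(t-[t]))$, proves $\hat Z$ is injective on $[0,\tau[$ where $\tau$ is the first time $\ge 1$ at which $\hat Z$ re-enters $\ti\cF_{\e,y_{*}}$ (Lemma \ref{lemma:injective}), proves that the curve closes up exactly at its starting point, $\hat Z(\tau)=\hat z(0)$ (Lemmata \ref{item:7-2} and \ref{lemma:e7.2}), that it does so transversally to $L_{y_{*}}$ (Lemma \ref{lemma:7.2}), and that it then retraces itself, $\hat Z([\tau,\tau+1[)=\hat Z([0,1[)$ (Lemma \ref{lemma:7.4}); only then can $\hat\Gamma$ be written as the union of two embedded pieces $\hat Z(]0,\tau[)$ and $\hat Z(]\tau-1,\tau+1[)$ whose intersection is open in both. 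The key ingredient you would need to supply is the analogue of Lemma \ref{item:7-2} --- whenever $f_{\e}^{n}(\hat z(s))$, $n\ge1$, lands in $\ti\cF_{\e,y_{*}}$ it lands \emph{on} $\hat\gamma$, because $n$ is then a return time and $\hat\gamma$ is $\hat f_{\e}$-invariant --- combined with an injectivity and transversality argument showing that the resulting overlap is a genuine retracing of the curve rather than a tangency or crossing.
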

 We give the proof of this proposition in subsection \ref{sec:7.2}.
\subsection{Preliminary results} 
We define the function $\hat Z:\R\to \R^2$
$$\forall\ t\in\R,\quad \hat Z(t)=f_{\e,k}^{[t]}(\hat z(t-[t]))$$
($[t]$ denotes the integer part of $t$ {\it i.e.} the unique integer such that $[t]\leq t<[t]+1$).
\begin{lemma}\label{lemma:Cr}The function $\hat Z:\R\to\R^2$ is $C^r$.
\end{lemma}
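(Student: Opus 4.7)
The plan is to verify $C^r$-regularity locally at each integer $t=n$ (away from integers, $\hat Z$ is a composition of $C^r$ maps with an iterate of $f_\e$, so regularity is automatic there). The content of the statement is that the formula $\hat Z(t)=f_\e^{[t]}(\hat z(t-[t]))$ glues correctly across integer times thanks to the way the differentiable structure on the quotient manifold $\tilde\cF_{\e,y_*}/f_\e$ was designed in Subsection \ref{sec:glueing}.

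I would fix an integer $n$ and work near $t=n$. Take the chart $(W_p,\psi_p)$ centered at $p=\hat z(0)\in L_{y_*}$ provided by the construction of $\tilde\cF_{\e,y_*}/f_\e$: recall $W_p=(\cF\cup L)\cap(U_p\cup f_\e(U_p))$, with $\psi_p=\varphi_p$ on the $\cF$-side of $L$ at $p$ and $\psi_p=\varphi_p\circ f_\e^{-1}$ on the $f_\e(U_p)$-side. By 1-periodicity of $\hat z$ viewed as a map into the quotient, and since $\lim_{t\to 1-}\hat z(t)=f_\e(\hat z(0))$ sits on the $f_\e(U_p)$-side of this chart, the $C^r$-regularity of $\hat z$ as a map $\R\to\tilde\cF_{\e,c_*y_*}/f_\e$ translates into the following concrete statement in $\R^2$: the function
\[
\tilde z(s)\;=\;\begin{cases}\hat z(s),&0\le s<\eta,\\ f_\e^{-1}(\hat z(s+1)),&-\eta<s<0,\end{cases}
\]
extends, for $\eta>0$ small enough, to a $C^r$ map from $(-\eta,\eta)$ to $\R^2$ with $\tilde z(0)=\hat z(0)=p$. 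Continuity at $s=0$ is immediate from the limit $\hat z(t)\to f_\e(\hat z(0))$ as $t\to 1-$, and the $C^r$-statement is just the definition of the differentiable structure on the quotient unwound via $\psi_p$.

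The second (and final) step is the identity
\[
\hat Z(n+s)\;=\;f_\e^n(\tilde z(s))
\qquad\text{for all $s$ with }|s|<\eta.
\]
For $0\le s<\eta$ this is just the definition of $\hat Z$ on $[n,n+1)$, namely $f_\e^n(\hat z(s))$. For $-\eta<s<0$ we compute, using the definition of $\hat Z$ on $[n-1,n)$,
\[
\hat Z(n+s)=f_\e^{n-1}(\hat z(s+1))=f_\e^n\bigl(f_\e^{-1}(\hat z(s+1))\bigr)=f_\e^n(\tilde z(s)),
\]
which matches. Since $f_\e^n$ is a $C^k$ (hence $C^r$) diffeomorphism on a neighborhood of $\tilde z(0)$ and $\tilde z$ is $C^r$ on $(-\eta,\eta)$, the composition $\hat Z(n+\cdot)=f_\e^n\circ \tilde z$ is $C^r$ on $(-\eta,\eta)$. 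This gives $C^r$-regularity of $\hat Z$ in a neighborhood of every integer, and combined with the obvious $C^r$-regularity on each open interval $(n,n+1)$ it yields $\hat Z\in C^r(\R,\R^2)$.

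The only point that requires a small verification is the translation between $\hat z$ being $C^r$ into the quotient and $\tilde z$ being $C^r$ into $\R^2$; but this is exactly encoded in the chart $\psi_p$, whose two branches $\varphi_p$ and $\varphi_p\circ f_\e^{-1}$ are chosen precisely so that this identification holds. There is no real obstacle; the argument is essentially bookkeeping about the quotient atlas.
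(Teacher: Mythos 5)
Your proof is correct and follows essentially the same route as the paper's: the paper likewise observes that $\hat Z=\hat z$ on $[0,1[$, that regularity across the integer is exactly what the differentiable structure on $\ti\cF_{\e,y_{*}}/f_{\e}$ encodes, and then propagates this to all of $\R$ via the equivariance $\hat Z(t+j)=f_{\e}^j(\hat Z(t))$. You simply unwind the chart $\psi_{p}$ explicitly, which the paper leaves implicit.
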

\begin{proof}Note that for $t\in [0,1[$, $\hat Z(t)=\hat z(t)$. Also, the very definition of $\ti \cF_{\e,y_{*}}/f_{\e,k}$ shows  that the function $\hat Z$ is $C^r$ on a neighborhood of  $t=1$. It is hence $C^r$ on $[0,2[$ and since for $j\in\Z$, $\hat Z(t+j)=f_{\e}^j(\hat Z(t))$, it is $C^r$ on $\R$.
\end{proof}

Let us set
$$\tau\mathop{=}_{defin.}\inf\{t\geq 1,\ \hat Z(t)\in \ti \cF_{\e,y_{*}}\}.$$
Note that
\be 2\leq \tau<\infty.\label{ee7.36}\ee
Indeed, the left hand side inequality is a consequence of the fact that $\ti \cF_{\e,y_{*}}\cap f_{\e}(\ti \cF_{\e,y_{*}})=\emptyset$. For the right hand side, we observe that  since $\hat Z(0)=\hat z(0)\in \ti \cF_{\e,c_{*}y_{*}}$, one has (see (\ref{deffhat})), $\hat Z(n_{\e}(\hat z(0)))=f_{\e}^{n_{\e}(\hat z(0))}(\hat z(0))\in \ti \cF_{\e,y_{*}}$, hence  $\tau\leq n_{\e}(\hat z(0))<\infty$.

\begin{lemma}\label{lemma:injective} The map $\hat Z:[0,\tau[\to\R^2$ is injective.
\end{lemma}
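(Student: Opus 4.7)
The plan is to prove injectivity of $\hat Z$ on $[0,\tau)$ by a direct case analysis on the integer parts of the arguments. Suppose $0\leq s<t<\tau$ satisfy $\hat Z(s)=\hat Z(t)$; writing $s=m+s_0$ and $t=n+t_0$ with $m=[s]$, $n=[t]$ and $s_0,t_0\in[0,1)$, this reads $f_\e^m(\hat z(s_0))=f_\e^n(\hat z(t_0))$. Two facts will drive the whole argument: (a) $\hat z\colon[0,1)\to\R^2$ is injective with image contained in $\ti\cF_{\e,c_*y_*}\subset\ti\cF_{\e,y_*}$, since $\hat\gamma$ is a simple closed curve in the abstract annulus $\ti\cF_{\e,c_*y_*}/f_\e$ parametrized by $\hat z|_{[0,1)}$ and no two distinct points of the fundamental domain $\ti\cF_{\e,c_*y_*}$ are identified in the quotient; and (b) by the very definition of $\tau$, one has $\hat Z(t)\notin\ti\cF_{\e,y_*}$ for every $t\in[1,\tau)$.

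Three of the four cases will then be immediate. If $m=n=0$, $\hat Z(s)=\hat z(s_0)$ and $\hat Z(t)=\hat z(t_0)$ with $s_0\neq t_0$, contradicting (a). If $m=0$ and $n\geq 1$, (a) gives $\hat Z(s)\in\ti\cF_{\e,y_*}$ while (b) gives $\hat Z(t)\notin\ti\cF_{\e,y_*}$, ruling out equality. If $m=n\geq 1$, cancelling the common diffeomorphism $f_\e^m$ reduces the equation to $\hat z(s_0)=\hat z(t_0)$, hence $s_0=t_0$ and $s=t$. The substantive case will be $1\leq m<n$ (the case $m>n$ being symmetric by swapping $s$ and $t$): the equation becomes $f_\e^{n-m}(\hat z(t_0))=\hat z(s_0)\in\ti\cF_{\e,c_*y_*}\subset\ti\cF_{\e,y_*}$ with $n-m\geq 1$. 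Setting $t':=t_0+(n-m)$, I observe $\hat Z(t')=f_\e^{n-m}(\hat z(t_0))\in\ti\cF_{\e,y_*}$ and $t'\geq 1$, so the definition of $\tau$ forces
\[
\tau\leq t'=t_0+n-m<t_0+n=t<\tau,
\]
a contradiction.

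The main delicate point will not be a hard step but rather bookkeeping: being consistent about which fundamental domain ($\ti\cF_{\e,c_*y_*}$ versus $\ti\cF_{\e,y_*}$) contains which piece of orbit, and interpreting $\hat z$ — which is strictly speaking a map into the abstract annulus $\ti\cF_{\e,c_*y_*}/f_\e$ — as a concrete $\R^2$-valued path on $[0,1)$, so that the formula $\hat Z(t)=f_\e^{[t]}(\hat z(t-[t]))$ and its propagation through iterates of $f_\e$ are unambiguous. Once these conventions are fixed, the whole proof reduces to a short index chase against the infimum defining $\tau$.
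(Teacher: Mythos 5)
Your proof is correct and follows essentially the same strategy as the paper's: both derive a contradiction with the minimality of $\tau$ by exhibiting a time $t'\in[1,\tau[$ at which $\hat Z(t')\in\ti\cF_{\e,y_{*}}$. Your explicit four-case decomposition (in particular the case $m=n$, handled via injectivity of $\hat z|_{[0,1[}$) is a slightly more careful organization of the argument that the paper carries out by first pinning down $m_{2}-m_{1}=[\tau]$ and $m_{1}=0$.
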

\begin{proof}
Assume by contradiction that  $\hat Z:[0,\tau[\to\R^2$ is not injective; then,  there exists $m_{i}\in\N$, $0\leq s_{i}<1$,   
 \be 0\leq m_{i}+s_{i}<\tau,\quad i=1,2,\qquad \hat Z(s_{1}+m_{1})=\hat Z (s_{2}+m_{2}).\label{eq:7.37}\ee  Hence $f_{\e}^{m_{1}}(\hat \gamma)\cap f_{\e}^{m_{2}}(\hat \gamma)\ne\emptyset$ and if $m:=m_{2}-m_{1}\geq 0$, $f_{\e}^{m}(\hat \g)\cap \hat \g\ne\emptyset.$ In particular, there exists $t\in [m,m+1[$ such that $\hat Z(t)\in\ti\cF_{\e,y_{*}}$  and then $t\geq \tau$. As a consequence $m> \tau-1$ and since $0\leq m<\tau$ ($m_{1},m_{2}$ are both in the interval $[0,\tau[$) one has $m=[\tau]$ hence $m_{2}=m=[\tau]$ and $m_{1}=0$.
We then have from (\ref{eq:7.37}), $\hat Z(s_{2}+[\tau])=\hat Z(s_{1})\in\ti\cF_{\e,y_{*}}$ (since $s_{1}\in [0,1[$) hence by the definition of $\tau$, $s_{2}+[\tau]\geq \tau$ which contradicts $m_{2}+s_{2}<\tau$.
\end{proof}
\begin{lemma}\label{item:7-2}  If for  some $t\geq 1$, $\hat Z(t)\in\ti\cF_{\e,y_{*}}$, then $\hat Z(t)\in\hat\gamma$.
\end{lemma}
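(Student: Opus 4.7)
The plan is to express $\hat Z(t)$ explicitly in terms of the parametrization $\hat z$, then use two key facts: the invariance of $\hat\gamma$ under $\hat f_\e$, and the fact that $\hat f_\e$ is the \emph{first} return map of $f_\e$ to $\ti\cF_{\e,y_*}$ (for orbits starting in $\ti\cF_{\e,c_*y_*}$).

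First, write $t = [t] + \{t\}$ with $\{t\}\in[0,1)$ and $[t]\geq 1$, and set $x_0 := \hat z(\{t\})\in\hat\gamma\subset\ti\cF_{\e,c_*y_*}$. By definition of $\hat Z$ one has $\hat Z(t) = f_\e^{[t]}(x_0)$. Since $\hat\gamma$ is an $\hat f_\e$-invariant subset of $\ti\cF_{\e,c_*y_*}$, the iterates $x_j := \hat f_\e^{\,j}(x_0)$ are well defined for every $j\geq 0$ and lie in $\hat\gamma$. Define the successive return times $T_0 = 0$, $T_{j} = T_{j-1} + n_\e(x_{j-1})$, so that $x_j = f_\e^{T_j}(x_0)$, and in particular $x_j\in \hat\gamma \subset \ti\cF_{\e,y_*}$ for every $j$.

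Now I apply the key property recalled in Section~\ref{sec:4.2}: $\hat f_\e$ is the first return map of $f_\e$ into $\ti\cF_{\e,y_*}$ for points starting in $\ti\cF_{\e,c_*y_*}$. Combined with the fact that $\hat f_\e^{\,j}(x_0)\in\ti\cF_{\e,c_*y_*}$ for all $j\geq 0$ (because $\hat\gamma\subset \ti\cF_{\e,c_*y_*}$), this means that for any integer $n$ with $T_{j-1} < n < T_j$ one has $f_\e^{\,n}(x_0)\notin \ti\cF_{\e,y_*}$. In other words, the only positive integers $n$ at which $f_\e^{\,n}(x_0)$ belongs to $\ti\cF_{\e,y_*}$ are exactly the return times $T_1, T_2,\ldots$. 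Since by hypothesis $\hat Z(t) = f_\e^{[t]}(x_0)\in \ti\cF_{\e,y_*}$ and $[t]\geq 1$, we must have $[t] = T_j$ for some $j\geq 1$, and therefore $\hat Z(t) = x_j = \hat f_\e^{\,j}(x_0)\in\hat\gamma$, as required.

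There is no substantial obstacle: the only point that requires a moment of care is the boundary case $\{t\} = 0$ (which happens when $t$ is an integer), where $x_0 = \hat z(0)\in L_{y_*}\cap\hat\gamma$; the argument goes through unchanged since $\hat z(0)$ still belongs to $\hat\gamma$ and lies in $\ti\cF_{\e,c_*y_*}$ (thanks to the construction of the parametrization $\hat z$). The argument implicitly relies on the inductive application of $\hat f_\e$ remaining inside $\ti\cF_{\e,c_*y_*}$, which is automatic since $\hat\gamma\subset\ti\cF_{\e,c_*y_*}$ is $\hat f_\e$-invariant.
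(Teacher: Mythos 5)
Your proof is correct and follows essentially the same route as the paper's: decompose $t=[t]+\{t\}$, observe that $[t]$ must be one of the successive return times of $\hat z(\{t\})$ to $\ti\cF_{\e,y_{*}}$, and conclude by the $\hat f_{\e}$-invariance of $\hat\gamma$ applied inductively. You merely spell out more explicitly (via the cumulative return times $T_j$) the step the paper states tersely as ``$n$ is thus a $m^{th}$ return time''.
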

\begin{proof} Indeed, writing $t=s+n$, $s\in [0,1[$, $n\in\N^*$, one has $\hat Z(t)=f_{\e}^n(\hat z(s))$. The integer $n\geq 1$ is thus a $m^{th}$ return time of $\hat z(s)$ in $\ti\cF_{\e,y_{*}}$, $\hat Z(t)=\hat f_{\e}^{m}(\hat z(s))$, and since $\hat \gamma$ is invariant by $\hat f_{\e}$,  it is readily seen by induction on $m$ that $f_{\e}^n(\hat z(s))\in \hat \gamma$.
\end{proof}
\begin{lemma}\label{lemma:e7.2} One has $\hat Z(\tau)=\hat z(0)$.
\end{lemma}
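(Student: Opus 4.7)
The plan is to combine Lemma \ref{item:7-2} (which forces $\hat Z(\tau)$ to lie on $\hat\gamma$) with the fact that $\hat\gamma$ meets the transversal $L_{y_*}$ in a single point. Since $\tau\geq 2\geq 1$ and $\hat Z(\tau)\in\tilde\cF_{\varepsilon,y_*}$ by definition of $\tau$, Lemma \ref{item:7-2} gives $\hat Z(\tau)\in\hat\gamma$. The heart of the argument is then to show that this point must lie on $L_{y_*}$, which by the unique-intersection property forces it to equal $\hat z(0)$.

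To establish $\hat\gamma\cap L_{y_*}=\{\hat z(0)\}$, I would use the graph structure provided by Theorem \ref{prop:6.3}: the curve $\bar\gamma\subset\R/\Z\times]0,c[$ is a graph, hence intersects each vertical fiber $\{x\}\times]0,c[$ in exactly one point, and in particular $\bar\gamma\cap(\{0\}\times]0,c[)=\{\bar z(0)\}$. From the proof of Lemma \ref{lemma:5.2} (equation (\ref{e5.26})) one has $h_\varepsilon(L_{y_*})=\{0\}\times]0,c[$, so pulling back by $h_\varepsilon^{-1}$ yields $\hat\gamma\cap L_{y_*}=\{\hat z(0)\}$. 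Since moreover $\hat\gamma\subset\tilde\cF_{\varepsilon,c_*y_*}\subset\cF_{\varepsilon,y_*}\cup L_{y_*}$, this gives the stronger inclusion $\hat\gamma\setminus\{\hat z(0)\}\subset\cF_{\varepsilon,y_*}$, the \emph{open} interior of the fundamental domain.

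I would conclude by contradiction. Suppose $\hat Z(\tau)\neq\hat z(0)$. Then $\hat Z(\tau)\in\hat\gamma\setminus\{\hat z(0)\}\subset\cF_{\varepsilon,y_*}$, which is open in $\R^2$. By the $C^r$-regularity of $\hat Z$ (Lemma \ref{lemma:Cr}), there exists $\delta\in\,]0,\tau-1[$ (which is nonempty since $\tau\geq 2$) such that $\hat Z(t)\in\cF_{\varepsilon,y_*}\subset\tilde\cF_{\varepsilon,y_*}$ for all $t\in\,]\tau-\delta,\tau]$. But this produces times $t\in[1,\tau)$ with $\hat Z(t)\in\tilde\cF_{\varepsilon,y_*}$, contradicting the minimality of $\tau$. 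Hence $\hat Z(\tau)=\hat z(0)$.

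The only nontrivial step is the identification $\hat\gamma\cap L_{y_*}=\{\hat z(0)\}$, which requires tracking the seam $\{0\}\times]0,c[$ of the standard annulus back through the normalization $h_\varepsilon$ to the transversal $L_{y_*}$; once this is in hand, the conclusion is an immediate openness/continuity argument and requires neither the injectivity of Lemma \ref{lemma:injective} nor the explicit analysis of the return-time function along $\hat\gamma$.
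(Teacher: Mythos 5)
Your argument handles the main case correctly, but it has a gap at its very first step: the assertion that $\hat Z(\tau)\in\ti\cF_{\e,y_{*}}$ \emph{by definition of} $\tau$. The quantity $\tau$ is defined as an \emph{infimum}, and the set $\ti\cF_{\e,y_{*}}=\cF_{\e,y_{*}}\cup L_{y_{*}}$ is not closed (it contains the boundary face $L_{y_{*}}$ but not the opposite face $f_{\e}(L_{y_{*}})$), so the infimum need not be attained. The case your proof does not exclude is the one where $\hat Z(t)\notin\ti\cF_{\e,y_{*}}$ for all $t\in[1,\tau]$ while $\hat Z(t)\in\cF_{\e,y_{*}}$ for $t$ in a right neighborhood of $\tau$, i.e.\ the orbit-curve enters the fundamental domain through the face $f_{\e}(L_{y_{*}})$ at time $\tau$. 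In that scenario, taking $t_{n}\downarrow\tau$ with $\hat Z(t_{n})\in\ti\cF_{\e,y_{*}}$ and applying Lemma \ref{item:7-2} only gives $\hat Z(\tau)\in{\rm closure}(\hat\gamma)\setminus\hat\gamma=\{f_{\e}(\hat z(0))\}$. This possibility must be ruled out by a separate argument, which is exactly what the paper does: if $\hat Z(\tau)=f_{\e}(\hat z(0))$, then $\hat Z(\tau-1)=f_{\e}^{-1}(\hat Z(\tau))=\hat z(0)\in\ti\cF_{\e,y_{*}}$, and since $\tau-1\geq 1$ by (\ref{ee7.36}) this contradicts the minimality of $\tau$. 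You should add this step; note it is a second, independent use of the inequality $\tau\geq 2$, beyond the one in your openness argument.

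Once that case is disposed of, the remainder of your proof is correct and is essentially the paper's argument made explicit: the identification $\hat\gamma\cap L_{y_{*}}=\{\hat z(0)\}$ via the graph structure of $\bar\gamma$ together with $h_{\e}(L_{y_{*}})=\{0\}\times\,]0,c[$, and the resulting inclusion $\hat\gamma\setminus\{\hat z(0)\}\subset\cF_{\e,y_{*}}$ followed by the openness/minimality contradiction, is precisely what underlies the paper's terse assertion that $\hat Z(\tau)\in{\rm closure}(\hat\gamma)\cap{\rm closure}(L_{y_{*}}\cup f_{\e}(L_{y_{*}}))=\{\hat z(0),f_{\e}(\hat z(0))\}$. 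So your contribution is a cleaner justification of the first half of the paper's proof; what is missing is its second half.
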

\begin{proof}From the definition of $\tau$ and  Lemma \ref{item:7-2} we have $\hat Z(\tau)\in{\rm closure}(\hat\gamma)\cap {\rm closure}(L_{y_{*}} \cup f_{\e}(L_{y_{*}})) $ hence $\hat Z(\tau)\in\{\hat z(0),f_{\e}(\hat z(0))\}$. To conclude we observe that one cannot have $\hat Z(\tau)=f_{\e}(\hat z(0))$ since otherwise one would have $\hat Z(\tau-1)=\hat z(0)\in\ti\cF_{\e,y_{*}}$ which contradicts the definition of $\tau$ (from  (\ref{ee7.36})  $\tau-1\geq 1$). 
\end{proof}
\begin{lemma}\label{lemma:7.2}The derivative of $\hat Z$ at $\tau$ is transverse to $L_{y_{*}}$.
\end{lemma}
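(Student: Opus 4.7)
My plan is to identify $\hat Z'(\tau)$ with a nonzero scalar multiple of $\hat z'(0)$, and to observe that $\hat z'(0)$ is itself transverse to $L_{y_{*}}$. The latter is built into the normalization procedure: by Lemma~\ref{lemma:5.2} the diffeomorphism $h_{\e}$ carries $L_{y_{*}}$ onto $\{0\}\times ]0,c_{}[$, while $\bar\gamma=h_{\e}(\hat\gamma)$ is a graph over $\R/\Z$, so its parametrization satisfies $\bar z'(0)\notin \R\,(0,1)$; the diffeomorphism $h_{\e}^{-1}$ transports this transversality back to $L_{y_{*}}$.

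I would then write $\tau=n+s$ with $n=[\tau]\ge 2$ and $s\in[0,1)$. The value $s=0$ is excluded because it would force $f_{\e}^{n}(\hat z(0))=\hat z(0)$ by Lemma~\ref{lemma:e7.2}, hence $\hat z(0)$ would be a fixed point of $\hat f_{\e}\vert_{\hat\gamma}$, contradicting the Diophantine (hence irrational) rotation number supplied by Remark~\ref{rem:6.1}. For $s\in(0,1)$ one has $\hat Z(t)=f_{\e}^{n}(\hat z(t-n))$ in a neighborhood of $\tau$, so $\hat Z'(\tau)=Df_{\e}^{n}(\hat z(s))\hat z'(s)$. The very argument of Lemma~\ref{lemma:e7.2} shows moreover that $n$ is the first return time of $\hat z(s)$ to $\ti\cF_{\e,y_{*}}$: for $j=1,\dots,n-1$ the parameter $s+j$ lies in $(1,\tau)$, so $f_{\e}^{j}(\hat z(s))=\hat Z(s+j)\notin\ti\cF_{\e,y_{*}}$, whereas $f_{\e}^{n}(\hat z(s))=\hat z(0)\in L_{y_{*}}$. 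Hence $\hat f_{\e}(\hat z(s))=\hat z(0)$ on the quotient $\ti\cF_{\e,y_{*}}/f_{\e}$.

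Next I would exploit the $\hat f_{\e}$-invariance of $\hat\gamma$ inside the quotient chart $(U_{\hat z(0)}\cup f_{\e}(U_{\hat z(0)}),\psi_{\hat z(0)})$ from Section~\ref{sec:glueing}. The claim is that near $\hat z(s)$ one has $\psi_{\hat z(0)}\circ \hat f_{\e}=\phi_{\hat z(0)}\circ f_{\e}^{n}$ in a \emph{full} two-sided neighborhood: on the side where $f_{\e}^{n}(p)\in U_{\hat z(0)}$ the return time is $n$, $\hat f_{\e}=f_{\e}^{n}$, and $\psi_{\hat z(0)}=\phi_{\hat z(0)}$ directly; on the other side the return time jumps to $n+1$ because $f_{\e}^{n}(p)$ has crossed $L_{y_{*}}$ into $f_{\e}^{-1}(\cF_{\e,y_{*}})$, but the target $f_{\e}^{n+1}(p)\in f_{\e}(U_{\hat z(0)})$ is read in the chart as $\phi_{\hat z(0)}\circ f_{\e}^{-1}$, and the extra $f_{\e}$ is precisely cancelled. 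Differentiating this unified formula and using that $\hat f_{\e}$ sends the tangent direction of $\hat\gamma$ at $\hat z(s)$ to a nonzero multiple of the tangent direction at $\hat z(0)$ (which in the chart is $D\phi_{\hat z(0)}(\hat z(0))\hat z'(0)$, since $\psi_{\hat z(0)}\circ \hat z(t)=\phi_{\hat z(0)}\circ\hat z(t)$ for $t\in[0,1)$ small) yields
\[
D\phi_{\hat z(0)}(\hat z(0))\,Df_{\e}^{n}(\hat z(s))\,\hat z'(s)\;=\;\alpha\,D\phi_{\hat z(0)}(\hat z(0))\,\hat z'(0)
\]
for some $\alpha\neq 0$; invertibility of $D\phi_{\hat z(0)}(\hat z(0))$ then gives $\hat Z'(\tau)=\alpha\,\hat z'(0)$, which is transverse to $L_{y_{*}}$ by the first paragraph.

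The delicate step I anticipate is precisely the two-sided chart matching: a priori the return-time function is integer-valued and jumps across the hypersurface $\{p:f_{\e}^{n}(p)\in L_{y_{*}}\}$ passing through $\hat z(s)$, so one might fear that $\hat f_{\e}$ is only one-sided differentiable there. The point, implicit in the construction of Section~\ref{sec:glueing} but worth making explicit, is that the quotient manifold structure is designed exactly so that this jump is absorbed by the transition map of the target chart, turning $\hat f_{\e}$ into the $C^{k}$ diffeomorphism that the whole Section~\ref{sec:5} rests upon.
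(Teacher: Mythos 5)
Your proof is correct, and it ultimately rests on the same two facts as the paper's argument: that $\hat Z'(\tau)$ is a tangent vector of $\hat\gamma$ at $\hat z(0)$ (up to applying $f_{\e}$ once), and that $\hat\gamma$ is transverse to $L_{y_{*}}$ because $\bar\gamma=h_{\e}(\hat\gamma)$ is a graph while $h_{\e}(L_{y_{*}})$ is vertical. The organization, however, is genuinely different. The paper argues by a soft dichotomy on the local behaviour of $\hat Z$ near $\tau$: either $\hat Z(t_n)\in\ti\cF_{\e,y_{*}}$ along a sequence $t_n\to\tau$, in which case Lemma \ref{item:7-2} places these points on $\hat\gamma$ and a limit of secants gives tangency of $\hat Z'(\tau)$ to $\hat\gamma$ at $\hat z(0)$; or a punctured neighbourhood of $\tau$ is mapped by $f_{\e}\circ\hat Z$ into $\cF_{\e,y_{*}}$, in which case $Df_{\e}\cdot\hat Z'(\tau)$ is tangent to $\hat\gamma$ at $f_{\e}(\hat z(0))$, hence transverse to $f_{\e}(L_{y_{*}})$. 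This sidesteps return times, the charts $\psi_{p}$, and the question of whether $\tau\in\Z$. You instead make the mechanism explicit: you identify $n=[\tau]$ as the return time of $\hat z(s)$, verify the chart identity $\psi_{\hat z(0)}\circ\hat f_{\e}=\phi_{\hat z(0)}\circ f_{\e}^{n}$ on both sides of the locus where the return time jumps, and differentiate to get $\hat Z'(\tau)=\alpha\,\hat z'(0)$. The price is the extra step of excluding $s=0$ — which you handle correctly via the irrational rotation number of Remark \ref{rem:6.1}, an ingredient the paper's proof does not need — and the reliance on $n$ being the exact return time; the gain is a sharper conclusion and a transparent account of why the glueing of Section \ref{sec:glueing} absorbs the return-time jump, the point the paper delegates to Lemma \ref{lemma:Cr} and to its case (2). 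Both arguments are sound; the paper's is shorter, yours is more explicit about where the differentiability across $L_{y_{*}}$ comes from.
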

\begin{proof}
1) If there exists a sequence $t_{n}\in\R$, $\lim t_{n}=\tau$ such that $Z(t_{n})\in\ti\cF_{\e,y_{*}}$, then from Lemma (\ref{item:7-2}) one has $Z(t_{n})\in\hat\gamma$ and consequently $(d\hat Z/dt)(\tau)$ is tangent to $\hat \gamma$, thus transverse to $L_{y_{*}}$.

2) Otherwise,  there exists an open interval $I\subset \R$, $I\ni\tau$,  such that   for all $t\in I\setminus\{\tau\}$,  $\hat Z(t)\notin\ti\cF_{\e,y_{*}}$
and $f_{\e}(\hat Z(t))\in \cF_{\e,y_{*}}$. From Lemma (\ref{item:7-2}) one then has for all $t\in I\setminus\{\tau\}$, $\hat Z(t+1)=f_{\e}(\hat Z(t))\in\hat\gamma$ (see item (\ref{i2}) of Subsection \ref{sec:glueing}) hence $Df_{\e}(f_{\e}(\hat Z(\tau)))\cdot (d\hat Z/dt)(\tau)$ is tangent to $\hat \g$ and in particular transverse to $f_{\e}(L_{y_{*}})$.  This implies that $(d\hat Z/dt)(\tau)$  is transverse to $L_{y_{*}}$.
\end{proof}

\begin{lemma}\label{lemma:7.4}One has $\hat Z([\tau,\tau+1[)=\hat Z([0,1[)$.
\end{lemma}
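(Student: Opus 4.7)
The plan is to show $\hat Z([\tau, \tau+1[) \subset \hat\gamma$ and then derive equality by an intermediate value argument, using that $\hat Z(\tau)$ and $\lim_{t \to (\tau+1)^-}\hat Z(t)$ are the two extremities of the arc $\hat\gamma$. By Lemma~\ref{lemma:e7.2}, $\hat Z(\tau) = \hat z(0) \in L_{y_{*}}$; by Lemma~\ref{lemma:7.2}, $\hat Z'(\tau)$ is transverse to $L_{y_{*}}$.  Combined with $\hat Z(t) \notin \ti\cF_{\e,y_{*}}$ for $t \in [1, \tau[$ (by definition of $\tau$), this forces $\hat Z$ to cross $L_{y_{*}}$ at $\tau$ entering the open interior $\cF_{\e, y_{*}}$.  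The covariance $\hat Z(t+1) = f_\e(\hat Z(t))$, valid on a neighborhood of $\tau$ because $s_0 := \{\tau\} \in (0,1)$, gives $\hat Z(\tau+1) = f_\e(\hat z(0)) \in f_\e(L_{y_{*}})$.  In view of Lemma~\ref{item:7-2}, applicable since $\tau \geq 2$, the goal reduces to showing $\hat Z([\tau, \tau+1[) \subset \ti\cF_{\e,y_{*}}$.

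The main obstacle is ruling out premature exits. I would let $t_1 = \inf\{t > \tau : \hat Z(t) \notin \overline{\cF_{\e, y_{*}}}\}$ and seek a contradiction from $t_1 \in (\tau, \tau+1)$.  The boundary $\partial\cF_{\e,y_{*}}$ has four components; $\hat Z(t_1)$ cannot lie on the separatrix segments (a) or (c) because $\hat Z$ takes values in $\bigcup_{n \geq 0} f_\e^{n}(\hat\gamma)$, disjoint from $\Sigma_\e$.  If $\hat Z(t_1) \in L_{y_{*}}$, Lemma~\ref{item:7-2} forces $\hat Z(t_1) \in \hat\gamma \cap L_{y_{*}} = \{\hat z(0)\} = \{\hat Z(\tau)\}$; but then $\hat Z([\tau, t_1]) \subset \overline{\hat\gamma}$ would parameterize as $\hat Z(t) = \hat z(u(t))$ with $u$ continuous and $u(\tau) = u(t_1) = 0$, forcing $u'$ to vanish at some interior point, contradicting $\hat Z'(t) = \hat z'(u(t))\,u'(t) \neq 0$ (the non-vanishing of $\hat Z' = Df_\e^{[t]} \cdot \hat z'(\{t\})$ coming from $\hat z'\neq 0$ and the invertibility of $Df_\e^{[t]}$).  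If instead $\hat Z(t_1) \in f_\e(L_{y_{*}})$, then $\hat Z(t_1 - 1) = f_\e^{-1}\hat Z(t_1) \in L_{y_{*}}$, and Lemma~\ref{item:7-2} (applicable since $t_1 - 1 \in (\tau - 1, \tau) \subset [1, \tau)$) gives $\hat Z(t_1 - 1) = \hat z(0) = \hat Z(0)$, contradicting the injectivity of $\hat Z$ on $[0, \tau[$ from Lemma~\ref{lemma:injective}.

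Hence $t_1 = \tau + 1$, so that $\hat Z([\tau, \tau+1[) \subset \ti\cF_{\e,y_{*}}$, and Lemma~\ref{item:7-2} gives $\hat Z([\tau, \tau+1[) \subset \hat\gamma$.  Defining $u(t) = \hat z^{-1}(\hat Z(t))$ on $[\tau, \tau+1[$, the function $u$ is continuous with $u(\tau) = 0$ and $\lim_{t \to (\tau+1)^-} u(t) = 1$ (using $\hat Z(\tau+1) = f_\e(\hat z(0)) = \lim_{s \to 1^-}\hat z(s)$), so by the intermediate value theorem $u([\tau, \tau+1[) = [0, 1[$, yielding $\hat Z([\tau, \tau+1[) = \hat z([0, 1[) = \hat\gamma = \hat Z([0, 1[)$.
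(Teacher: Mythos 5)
Your overall strategy is the same as the paper's: a first-exit-time argument showing that after time $\tau$ the curve $\hat Z$ stays in the fundamental domain for exactly one unit of time, traversing $\hat\gamma$ once. But there is a genuine gap in how you control the exit. Because you define $t_{1}$ with the \emph{closed} region $\overline{\cF_{\e,y_{*}}}$, you must rule out contact with each of the four boundary pieces, and your exclusion of piece (c) rests on a false premise: (c) is the arc of the hyperbola $\{xy=x_{*}y_{*}\}$ joining $(x_{*},y_{*})$ to $f_{\e}(x_{*},y_{*})$, which is \emph{not} contained in the separatrix $\Sigma_{\e}$ (by Remark \ref{rem:2.2}, $\Sigma_{\e}\cap V$ lies on the coordinate axes), so disjointness of $\bigcup_{n}f_{\e}^{n}(\hat\gamma)$ from $\Sigma_{\e}$ says nothing about (c). Two further problems stem from the same choice: even granting $t_{1}=\tau+1$ you only obtain $\hat Z([\tau,\tau+1[)\subset\overline{\cF_{\e,y_{*}}}$, not $\subset\ti\cF_{\e,y_{*}}=\cF_{\e,y_{*}}\cup L_{y_{*}}$, so Lemma \ref{item:7-2} does not yet apply on the whole interval; and in your $L_{y_{*}}$ case the inclusion $\hat Z([\tau,t_{1}])\subset\overline{\hat\gamma}$ is asserted without proof (for $t\in\,]\tau,t_{1}[$ you only know $\hat Z(t)\in\overline{\cF_{\e,y_{*}}}$, which is not enough to invoke Lemma \ref{item:7-2}).

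All of these difficulties disappear --- and this is how the paper proceeds --- if you measure the exit with respect to the \emph{open} set: put $s_{*}=\sup\{s>0:\ \hat Z(t)\in\cF_{\e,y_{*}}\ \forall t\in\,]\tau,\tau+s[\,\}$, which is positive by your transversality step. Then Lemma \ref{item:7-2} applies at every $t\in\,]\tau,\tau+s_{*}[$ and pins $\hat Z(t)$ to $\hat\gamma$, so by continuity $\hat Z(\tau+s_{*})\in\overline{\hat\gamma}=\hat z([0,1])$. Since $\hat\gamma\subset\ti\cF_{\e,c_{*}y_{*}}$, the only points of $\overline{\hat\gamma}$ not in $\cF_{\e,y_{*}}$ are $\hat z(0)$ and $f_{\e}(\hat z(0))$; the boundary pieces (a) and (c) never enter the discussion. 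The first alternative is excluded by the injectivity of Lemma \ref{lemma:injective} (write $\hat Z(\tau+s_{*})=\hat Z(\tau)$ as an identity between $\hat Z$ at two distinct times in $[0,2[\,\subset[0,\tau[$, as in your piece-(d) case; this replaces your Rolle argument), the second gives $s_{*}\geq 1$ exactly as you argue, and $s_{*}\leq 1$ because $\hat Z(\tau+1)=f_{\e}(\hat z(0))\notin\cF_{\e,y_{*}}$. Your concluding intermediate-value argument is fine and is in fact more explicit than the paper's.
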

\begin{proof}
We define $s_{*}=\sup\{s\geq 0:\forall  t\in [\tau, \tau+s[, \ \hat Z(t)\in \cF_{\e,y_{*}}\}$. From Lemmata \ref{item:7-2} and \ref{lemma:7.2} one has: (a) $s_{*}>0$;  (b) for any $t\in [\tau,\tau+s_{*}[$, $\hat Z(t)\in \hat\gamma$; and (c)
$\hat Z(\tau+s_{*})\in f_{\e}(L_{y_{*}})\cap {\rm closure}( \hat\gamma)=f_{\e}(\hat z(0))=\hat Z(1)$. In particular, $\hat Z(\tau+s_{*}-1)=f_{\e}^{-1}(\hat Z(1))=\hat Z(0)\in\ti\cF_{\e,y_{*}}$ and by the definition of $\tau$, this implies $s_{*}\geq 1$. Now we notice that one cannot have $s_{*}>1$ because otherwise 
  $\tau+1\in [\tau,\tau+s_{*}[$ and by  definition of $s_{*}$, $\hat Z(\tau+1)\in\cF_{\e,y_{*}}$; but $\hat Z(\tau+1)=f_{\e}(\hat Z(\tau))$ and since $\hat Z(\tau)=\hat z(0)$  (Lemma \ref{lemma:e7.2}) one has  $\hat Z(\tau+1)=f_{\e}(\hat z(0))\notin \cF_{\e,y_{*}}$. We have hence proven that 
$s_{*}=1$. This implies that $\hat Z([\tau,\tau+1[)=\hat Z([0,1[)$.

\end{proof}

\subsection{Proof of Proposition \ref{prop:7.2}}\label{sec:7.2}
We first observe that 
\be \hat\Gamma=\bigcup_{n\in\Z}f_{\e}^n(\hat \g)=\hat Z(\R)=\bigcup_{n\in\Z}f_{\e}^n(\hat Z([0,\tau+1[)).\label{n7.63}\ee
Next we note that
\begin{enumerate}
\item \label{item1}One has $\hat Z ([0,\tau+1[)=\hat Z([0,\tau])$.
\item \label{item2}The set $\hat Z([0,\tau+1[)$ is $f_{\e}$-invariant. 
\end{enumerate}
Item (\ref{item1}) is a consequence of \begin{align*}\hat Z ([0,\tau+1[)&=\hat Z ([0,\tau])\cup \hat Z ([\tau,\tau+1[)\\
&=\hat Z([0,\tau])\cup \hat Z([0,1[) \qquad(\textrm{Lemma}\ \ref{lemma:7.4})\\
&=\hat Z([0,\tau])\qquad (1\leq \tau).
\end{align*}
Item (\ref{item2}) follows from Item (\ref{item1}) and
\begin{align*}f_{\e}(\hat Z([0,\tau+1[))&=f_{\e}(\hat Z ([0,\tau]) )\\
&=\hat Z([1,\tau+1])\\
&=\hat Z([1,\tau])\cup \hat Z([\tau,\tau+1])\\
&=\hat Z([1,\tau])\cup \hat Z([0,1])\qquad(\textrm{Lemma}\ \ref{lemma:7.4})\\
&=\hat Z([0,\tau])\qquad (1\leq \tau)\\
&=\hat Z([0,\tau+1[).
\end{align*}
Item (\ref{item2}) and (\ref{n7.63}) yield
$$\hat \Gamma=\hat Z([0,\tau+1[).$$
This last identity  shows that $\hat \Gamma$ is a connected, compact ({\it cf.} Item (\ref{item1})) subset of $\R^2$ which is $f_{\e}$-invariant.

\medskip Let us prove that $\hat \Gamma$ is a 1-dimensional submanifold of $\R^2$.
Since $\hat Z(\tau)=\hat Z(0)$ (Lemma \ref{lemma:e7.2}) one has 
$$\hat Z([0,\tau+1[)=\hat Z(]0,\tau+1[)=\hat Z(]0,\tau[)\cup \hat Z(]\tau-1,\tau+1[).$$ 
From Lemmata \ref{lemma:Cr}, \ref{lemma:injective} the set $\hat Z(]0,\tau[)$ is a 1-dimensional submanifold of $\R^2$ as well as the set $\hat Z(]\tau-1,\tau+1[)$ (note that $\hat Z(]\tau-1,\tau+1[)=f_{\e}(\hat Z([\tau-2,\tau[)$). The intersection of these two sets is  $\hat Z(]\tau,\tau+1[)$ and from Lemma \ref{lemma:7.4} it is  equal to $\hat Z(]0,1[)$ which is a 1-dimensional submanifold of $\R^2$. As a consequence the union $\hat Z(]0,\tau[)\cup \hat Z(]\tau-1,\tau+1[)$ is 1-dimensional submanifold of $\R^2$.

\medskip
This concludes the proof of Proposition \ref{prop:7.2}
\hfill $\Box$

\section{Proof of Theorem \ref{theo:2.1} (hence of Theorem \ref{theo:A})  }\label{sec:8}
  As we have mentioned in Section \ref{sec:2.4} Theorem \ref{theo:A} follows  from  Theorem \ref{theo:2.1}; we describe the  proof  of this latter result  in this Section. 

\bigskip Let  $r=k-k_{0}-2$,  $|\e|\leq \e_{1}$ and $\nu\leq \nu_{1}$. Theorem \ref{prop:6.3} yields a set $\bar \cG_{\e,\nu}$ of $C^r$, $\bar f_{\e}$-invariant graphs contained in $(\R/\Z)\times ]e^{-1}\nu,\nu[$, the union of which covers a set of positive Lebesgue measure.

In the previous section ({\it cf.} Proposition \ref{prop:7.2}), for all $\nu\in ]0,\nu_{1}[$, we have associated to each $\bar f_{\e}$-invariant graph $\bar \g\in\bar \cG_{\e,\nu}$  an $f_{\e}$-invariant $C^r$-curve 
\be \hat \Gamma=\bigcup_{n\in\Z}f_{\e}^n(\hat\g),\qquad\textrm{where}\quad \hat \g=h_{\e}^{-1}(\bar \g).\label{8.61*}\ee
We denote by $\hat \cG_{\e,\nu}$ the set of all such curves $\hat\Gamma$.

 To prove Theorem \ref{theo:2.1} we just have to prove that for all $\nu\in]0,\nu_{1}[$

\be(\textrm{Positive\  measure})\qquad {\rm Leb}_{\R^2}\biggl(\bigcup_{\hat \Gamma\in\hat \cG_{\e,\nu}}\hat \Gamma \biggr)>0 \label{eq:8.38}\ee
 and
 \be(\textrm{Accumulation})\qquad\lim_{\nu\to 0}\sup_{\hat\Gamma\in\hat\cG_{\e,\nu}} {\rm dist}(\hat \Gamma,\Sigma_{\e} )=0.\label{eq:8.39}
 \ee
 
\subsection{Proof of (\ref{eq:8.38}) (Positive measure)}
This is a consequence of 
 the  inclusion  ({\it cf.} (\ref{8.61*})) 
$$h_{\e}^{-1}\biggl(\bigcup_{\bar \g\in\bar \cG_{\e,\nu}}\bar \g \biggr)\subset \bigcup_{\hat \Gamma\in\hat \cG_{\e,\nu}}\hat \Gamma $$
and of the fact that  ${\rm Leb}_{2}\biggl(\bigcup_{\bar \g\in\bar \cG_{\e,\nu}}\bar \g\biggr)>0$ (this is the content of Theorem \ref{prop:6.3}).

\subsection{Proof of (\ref{eq:8.39}) (Accumulation)}

Let $\bar\g\in\bar\cG_{\e,\nu}$, $\bar\g\subset (\R/\Z)\times]0,\nu[$.
From the definition (\ref{e5.26}) of the diffeomorphism $h_{\e}$ we see that for some positive  constant $C_{\l}$ depending on $\l$ ({\it cf.} condition \ref{iii2})
$$\hat\g=h_{\e}^{-1}(\bar\g)\subset\{(x,y)\in \ti \cF_{\e,y_{*}},\ xy\in ]0,C_{\l}\nu[ \}.$$

On the other hand 
$$\hat \Gamma=\bigcup_{n\in\Z}f_{\e}^n(\hat \g)=\biggl(\bigcup_{n\in\Z}f_{\e}^n(\hat \g)\cap V_{}\biggr)\cup \bigcup_{\substack{n\in\Z\\ f_{\e}^n(\hat \g)\not\subset V_{}}}f_{\e}^n(\hat \g).$$
From condition  \ref{iii2}  one has 
$$\bigcup_{n\in\Z}f_{\e}^n(\hat\g)\cap V_{}\subset V_{}\cap \{(x,y),\ xy\in ]0,C_{\l}\nu[\}$$
hence, using Remark \ref{rem:2.2}
\be {\rm dist}\biggl( \bigcup_{n\in\Z} f_{\e}^n(\hat \g)\cap V_{} ,\Sigma_{\e}\cap V_{} \biggr)=o_{\nu}(1) \ (\textrm{uniform \ in }\ \hat\g).\label{8.41}\ee
Now, recalling  the definition (\ref{defN}) of the integer $N_{}$ of Section \ref{sec:4.2}  one has 
$$ \bigcup_{\substack{n\in\Z\\ f_{\e}^n(\hat \g)\not\subset V_{}}}f_{\e}^n(\hat \g)\subset \bigcup_{n=1}^{N_{}} f^{-n}_{\e}(\hat\g)  $$
and using the fact that ${\rm dist}(\hat \g,\Sigma_{\e}\cap [(x_{*},0),f_{\e}(x_{*},0)[)=o_{\nu}(1)$ one can see that  ($N_{}$ is fixed)
\be  {\rm dist}\biggl( \bigcup_{\substack{n\in\Z \\ f_{\e}^n(\hat\g)\not\subset V_{}}} f_{\e}^n(\hat \g) ,\Sigma_{\e}\cap \bigcup_{j=1}^{N_{}} f_{\e}^{-1}([(x_{*},0),f_{\e}(x_{*},0)[)) \biggr)=o_{\nu}(1)\label{8.42}\ee
where the previous limit is uniform in $\hat\g$.

Equations (\ref{8.41}), (\ref{8.42}) give 
$${\rm dist}(\hat \Gamma,\Sigma_{\e})=o_{\nu}(1).$$

\subsection{KAM circles for $f_{\e}$}\label{rem:8.1}
Let $\hat \Gamma$ be a $C^r$ invariant curve for $f_{\e}$ of the form (\ref{8.61*}) and $g_{\hat \Gamma}$ the restriction of $f_{\e}$ to $\hat \Gamma$. The map $g_{\hat\Gamma}$ can be identified with a circle diffeomorphism.  Similarly, the restriction of $\bar f_{\e}$ to the invariant curve $\bar \g$ yields a circle diffeomorphism $g_{\bar \g}$. 

Let $\hat\a$ and $\bar\a$ are the rotation numbers of $g_{\hat\Gamma}$ and $g_{\bar\g}$. 
\begin{lemma}One has $\{1/\hat\a\}=\bar\a$
(here $\{\cdot\}$ denotes  the fractional part). 
\end{lemma}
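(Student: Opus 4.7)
The plan is to use the continuous parametrization $\hat Z:\R\to\hat\Gamma$ introduced in Section \ref{sec:7} as a common universal cover relating the two rotation numbers. First, I would observe that Lemma \ref{lemma:e7.2} combined with the defining relation $\hat Z(t+1)=f_\e(\hat Z(t))$ implies that $\hat Z$ is $\tau$-periodic and descends to a $C^r$ homeomorphism $\R/\tau\Z\to\hat\Gamma$ which intertwines the shift $t\mapsto t+1$ with the restriction $g_{\hat\Gamma}$. After rescaling the source by $x=t/\tau$ so that $\hat\Gamma$ is parametrized by $\R/\Z$, the map $g_{\hat\Gamma}$ becomes the rigid rotation $x\mapsto x+1/\tau$; hence $\hat\a\equiv 1/\tau\pmod\Z$, and in particular $1/\hat\a\equiv\tau\pmod\Z$ so that $\{1/\hat\a\}=\{\tau\}$.

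Next I would identify $\bar\g$ with the sub-arc $\hat\g=\hat Z([0,1))\subset\hat\Gamma$ after gluing its two endpoints via $f_\e$. Since $\hat z=h_\e^{-1}\circ\bar z$ and $\hat z=\hat Z|_{[0,1)}$, the parametrizations agree: $t\in[0,1)$ corresponds simultaneously to $\bar z(t)\in\bar\g$ and to $\hat Z(t)\in\hat\g$, with the identification $0\sim 1$ on $[0,1)$ matching the identification $\hat Z(0)\sim\hat Z(1)=f_\e(\hat Z(0))$. Through the conjugacy $\hat f_\e=h_\e^{-1}\circ\bar f_\e\circ h_\e$, this realizes $g_{\bar\g}$ as the first return map of the shift $t\mapsto t+1$ on $\R/\tau\Z$ to the sub-arc $[0,1)$.

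The conclusion then follows from a direct calculation of that Poincar\'e return map. Writing $\tau=N+\th$ with $N=[\tau]\ge 2$ and $\th=\{\tau\}\in(0,1)$, one checks that for $t\in[0,\th)$ the orbit $t\mapsto t+k$ first re-enters $[0,1)$ modulo $\tau$ at $k=N+1$, landing at $t+1-\th$, while for $t\in[\th,1)$ it does so at $k=N$, landing at $t-\th$. After quotienting $[0,1)$ by $0\sim 1$ to form $\R/\Z$, both branches assemble into a single rigid rotation by $-\th$ on $\R/\Z$, which, with the orientation conventions inherited from $\bar z$, has rotation number $\{1/\hat\a\}$. This yields $\bar\a=\{1/\hat\a\}$.

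The main technical obstacle I anticipate is orientation bookkeeping: one must carefully verify that the orientation of $\bar\g$ induced by the first coordinate on $\R/\Z$ is compatible with the orientation of $\hat\g$ coming from the $\hat Z$-parametrization (transported by $h_\e$), so that the sign of the rotation in the explicit computation aligns with the convention defining $\bar\a$. All remaining steps are a direct unravelling of the definitions of $\hat Z$, $\hat f_\e$ and $h_\e$, combined with the Gauss-type arithmetic identity $\{\tau\}=\{1/\hat\a\}$ and the standard characterization of first-return maps of rigid rotations on sub-arcs.
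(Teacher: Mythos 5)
Your reduction of the lemma to a computation of the first return map of $g_{\hat\Gamma}$ on the arc $\hat\g=\hat Z([0,1[)$, with its endpoints glued by $f_{\e}$, is exactly the paper's strategy. But the step on which your whole computation rests is false: $\hat Z$ is \emph{not} $\tau$-periodic, and $g_{\hat\Gamma}$ is \emph{not} conjugated by $\hat Z$ to the rigid translation $t\mapsto t+1$ of $\R/\tau\Z$. Lemma \ref{lemma:e7.2} gives only the single identity $\hat Z(\tau)=\hat Z(0)$, and Lemma \ref{lemma:7.4} gives only an equality of \emph{images}, $\hat Z([\tau,\tau+1[)=\hat Z([0,1[)$; pointwise one has $\hat Z(t+\tau)=\hat Z(\psi(t))$ for some homeomorphism $\psi$ of $[0,1[$ which is essentially the return map itself, and nothing forces $\psi(t)=t$. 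Consequently the identities $\hat\a\equiv 1/\tau$, $\{1/\hat\a\}=\{\tau\}$, and the description of the return map as the rigid rotation by $-\{\tau\}$ are all unjustified. They cannot be repaired as stated: a rotation number is an asymptotic invariant of the whole orbit, not a function of the first return time of the single point $\hat z(0)$, and your conclusion would force $g_{\bar\g}$ to be a rigid rotation in the coordinates transported by $h_{\e}\circ\hat Z$ --- a strong integrability statement that is false in general (the curves produced by Theorem \ref{theo:moser} carry circle diffeomorphisms that are merely conjugate, not equal, to rotations).

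The repair is what the paper's proof means by ``classical arguments''. The arc $\hat\g$ is of the form $[p,g_{\hat\Gamma}(p)[$ with $p=\hat z(0)$, so the induced map on $\hat\g/f_{\e}$ is the continued-fraction renormalization of the circle homeomorphism $g_{\hat\Gamma}$, and its rotation number is $\{1/\hat\a\}$. One proves this not by assuming $g_{\hat\Gamma}$ rigid, but by transporting the computation through Poincar\'e's semi-conjugacy of $g_{\hat\Gamma}$ with the rigid rotation $R_{\hat\a}$ of angle $\hat\a$: rotation numbers of first return maps are preserved under this semi-conjugacy, and your explicit two-branch computation is then valid for $R_{\hat\a}$, with $1/\hat\a$ playing the role you wanted $\tau$ to play. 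Combined with the conjugacy $h_{\e}$, which carries this return map to $g_{\bar\g}$ whose rotation number is $\bar\a$ by definition, this yields the lemma. The orientation issue you flag is genuine but is settled once and for all at the level of $R_{\hat\a}$.
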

\begin{proof}We refer to the renormalization procedure defined in Sections \ref{sec:4} and \ref{sec:5}. Let $\hat J$ be  the arc $\ti \cF_{\e,y_{*}}\cap \hat \Gamma$. The  restriction on $\hat J$  of $\hat f_{\e}$, the first return map of $f_{\e}$ in  $\ti \cF_{\e,y_{*}}$, defines a $C^r$ diffeomorphism of the abstract circle $\hat J/f_{\e}$.  Classical arguments show that the rotation number of this circle diffeomorphism is equal to $\{1/\hat\a\}$. On the other hand, after normalization of $f_{\e}$ by $h_{\e}$ ({\it cf.} formula (\ref{ee5.28})), $\hat \G$ is transported to  $\bar \g$ and the $C^r$ diffeomorphism $\hat f_{\e}:\hat J/f_{\e}\to \hat J/f_{\e}$ to the circle diffeomorphism $\bar f_{\e}: \bar J/T_{1}\to \bar J/T_{1}$ where $\bar J=h_{\e}(\hat J)\subset \bar \g$ is a fundamental domain of $\bar f\ {|\ \bar \g}$. The rotation numbers of  $\hat f_{\e}:\hat J/f_{\e}\to \hat J/f_{\e}$ and $\bar f_{\e}: \bar J/T_{1}\to \bar J/T_{1}$ are hence equal. But the rotation number of  $\bar f_{\e}: \bar J/T_{1}\to \bar J/T_{1}$  is (same  argument as before) equal to $\{1/\bar\a\}$.
\end{proof}
Since $\bar\a$  can be chosen in a fixed Diophantine class $DC(\kappa,\tau)$ (see Remark \ref{rem:6.1}), the rotation number  $\hat\a$ is Diophantine with the {\it same} exponent $\tau$. By Herman-Yoccoz Theorem  on linearization of $C^r$-circle diffeomorphisms (\cite{Herman-ihes}, \cite{Y-84}), this implies that if $r$ is large enough (depending on $\tau$ which is fixed), the diffeomorphism $g_{\hat \Gamma}$ is {\it linearizable}; in other words, $\hat \Gamma$ is a {\it KAM} curve. On the other hand one has {\it a priori} no control on the Diophantine constant of $\hat\a$.

\bigskip This concludes the proof of Theorem \ref{theo:2.1} whence of Theorem \ref{theo:A}.\hfill  $\Box$

\section{Proof of Theorem \ref{theo:B}}\label{sec:9}
We construct in Subsection \ref{sec:9.1} a symplectic diffeomorphism  $f_{pert}$ admitting a separatrix $\Sigma$ (see Figure \ref{fig:*}) and depending on a (``large'') parameter $M$.  We renormalize $f_{pert}$ like in Sections \ref{sec:4} and \ref{sec:5} to get  a diffeomorphism $\bar{f}_{pert}$ of an open annulus $\R/\Z\times ]0,c[$.  We  prove  in Proposition \ref{prop:9.3} of Subsection \ref{sec:9.2}  that this renormalized diffeomorphism $\bar f_{pert}$ sends some  graphs projecting on a fixed interval $J_{M}$ (see (\ref{JM}) on graphs which project on the whole circle and which are below the initial  graphs we have started from, see Figure \ref{fig:7}.  We then iterate this procedure in Subsection \ref{sec:9.3} to find   an  orbit of $\bar f_{pert}$ accumulating the boundary $\R/\Z\times\{0\}$ of the aforementioned annulus: this prevents the existence of $\bar{f}_{pert}$-invariant  curves close to this boundary  and therefore of $f_{pert}$-invariant curves close to the separatrix $\Sigma$. The diffeomorphism $f_{pert}$ is the searched for example of Theorem \ref{theo:B}.

\subsection{Construction of the example}\label{sec:9.1}
We start with a smooth autonomous symplectic vector field of the form  $X_{0}=J\nabla H_{0}$ where $H_{0}:\R^2\to\R$ satisfies on some neighborhood $V$ of $o=(0,0)$
$$H_{0}(x,y)=x y\qquad \textrm{on} \ V $$
and has the property that $\Sigma=H_{0}^{-1}(H_{0}(0,0))$ is compact and connected. The set $\Sigma$ is a separatrix of 
$$f\mathop{=}_{defin.}\phi^1_{J\nabla H_{0}}$$ associated to the hyperbolic fixed point $o$.

Fixing $x_{*}>0$ small enough we  can define like in Section \ref{sec:4}, for $y_{*}>0$ small enough, a fundamental domain $\ti \cF_{y_{*}}=\cF_{y_{*}}\cup L_{y_{*}}\subset V$ where $\cF_{y_{*}}$ is defined by $(a)-(d)$ (sec. \ref{sec:4.1}) with $\phi^1_{J\nabla H_{0}}$ in place of $f_{\e}$. We can even assume that $\phi_{J\nabla H_{0}}^{-j}(\ti\cF_{y_{*}})\subset V$, $j=1,2$. There exists $c_{*}>0$ such that the first return map
$$\hat f:\ti\cF_{c_{*}y_{*}}\to \cF_{y_{*}}$$
is well defined. We can renormalize $f=\phi^1_{J\nabla H_{0}}$ like in Section \ref{sec:5} by first normalizing $f$ ({\it cf.} Lemma  \ref{lemma:5.2}):
\be h\circ f\circ h^{-1}=T_{1}\label{ee9.50}
\ee
where 
\be h:\cF_{y_{*}}\to [0,1[\times]0,c[ \label{9.67*}\ee is symplectic \footnote{See (\ref{e5.26}), (\ref{e5.23}) and the fact that we choose $q(s)=s$.}
and then  setting ({\it cf.} (\ref{ee5.28}))
\be \bar f\mathop{=}_{defin.}h\circ \hat f\circ h^{-1}:\R/\Z\times ]0,\d[\to \R/\Z\times ]0,c[.\label{ee9.51}\ee
By (\ref{5.27}) of Proposition \ref{prop:main} we have
\be \bar f=T_{l},\qquad l(y)=\s(y)-\ln y\label{ee9.52}\ee
for some smooth function $\s$.

We can assume that $h(\ti \cF_{y_{*}})=[0,1[\times ]0,c[$ and that $T^{-j}([0,1[\times ]0,c[)\subset h^{-1}(V)$, $j=1,2$.

\bigskip We now construct a symplectic perturbation $f_{pert}:\R^2\to\R^2$ of $f$ which admits $\Sigma$ as a separatrix. We shall need first the following lemma.

\begin{lemma}\label{lemma:9.1} There exist $b\in (0,1)$ and a  nonempty compact interval $I\subset ]0,1[$  such that for any $M>0$,  there exists a smooth function $\ph_{M}:\R\to\R$ satisfying
\begin{enumerate}
\item\label{ii1} $\ph_{M}\ |_{I}\leq -bM$;
\item\label{ii2} $\frac{ b^{-1} M}{|I|}\geq -\ph_{M}' \ |_{ I}\geq \frac{M}{|I|}$;
\item\label{ii3} the map $s_{M}:\R\to\R$ defined by
$$s_{M}(t)=\int_{0}^te^{\ph_{M}(u)}du$$
is an increasing  smooth diffeomorphism of $\R$ that coincides with the identity on $\R\setminus[0,1]$.
\end{enumerate}
\end{lemma}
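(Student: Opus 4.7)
The plan is to take $b=1/2$ and $I=[0.4,0.6]$ (so $|I|=0.2$), and build $\ph_M$ as the sum of a smooth ``negative well'' $\psi_M$ supported in $[0.3,0.7]$ which carries the pointwise and slope conditions (\ref{ii1})--(\ref{ii2}) on $I$, plus a smooth positive bump $A_M\eta$ supported in $[0.1,0.2]$ (disjoint from the well) whose amplitude $A_M$ is tuned so that $\int_0^1 e^{\ph_M(u)}du=1$. This integral identity is exactly what forces $s_M$ to be the identity outside $[0,1]$: since $s_M'(t)=e^{\ph_M(t)}$ and $\ph_M$ will be flat and identically zero off $[0,1]$, one has $s_M(t)=t$ for $t\leq 0$ automatically, and for $t\geq 1$ precisely when $s_M(1)=1$.

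To construct $\psi_M$, I would prescribe its derivative directly rather than $\psi_M$ itself: let $\psi_M'\equiv -M/|I|$ on $I$, smoothly interpolate from $0$ at $0.3$ down to $-M/|I|$ at $0.4$ with total integral $-M/2$ over $[0.3,0.4]$, and smoothly interpolate from $-M/|I|$ at $0.6$ back up through $0$ to a positive peak and down to $0$ at $0.7$ with total integral $+3M/2$ over $[0.6,0.7]$. Such a $\psi_M'$ is obtained by fixing once and for all two smooth shape functions on these transition intervals with the required boundary values, flatness, and normalized integrals, and then scaling by $M/|I|$ (resp.\ by $M$). Taking $\psi_M$ to be the antiderivative of $\psi_M'$ with $\psi_M(0.3)=0$ yields a $C^\infty$ function supported in $[0.3,0.7]$, flat at the boundary, satisfying $\psi_M(0.4)=-M/2$ and $\psi_M(0.6)=-3M/2$. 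This directly gives (\ref{ii1}) (since $\psi_M$ is decreasing on $I$, its max there is $-M/2=-bM$) and (\ref{ii2}) (the slope is exactly $-M/|I|\in[-b^{-1}M/|I|,-M/|I|]=[-2M/|I|,-M/|I|]$).

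Next, fix a smooth bump $\eta:\R\to[0,1]$ supported in $[0.1,0.2]$, flat at the endpoints, and not identically zero, and set $\ph_M=\psi_M+A_M\eta$. The conditions on $I$ are preserved because $\eta$ vanishes on $I$ and on a whole neighborhood of it. To satisfy (\ref{ii3}), define $G_M(A)=\int_0^1 e^{\psi_M(u)+A\eta(u)}du$; this function is continuous and strictly increasing in $A$ (as $\eta\geq 0$ and $\eta>0$ on a set of positive measure), tends to $+\infty$ as $A\to\infty$, and satisfies $G_M(0)=\int_0^1 e^{\psi_M}du<1$ because $\psi_M\leq 0$ everywhere on $[0,1]$ with strict inequality on an interval of positive measure. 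The intermediate value theorem then yields a unique $A_M>0$ with $G_M(A_M)=1$, completing the construction.

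The main technical delicate point is to make $\psi_M$ genuinely $C^\infty$ across the junctions $\{0.3,0.4,0.6,0.7\}$ while it has the large prescribed slope $-M/|I|$ on $I$. The derivative-first construction circumvents this cleanly: once the two transition shapes for $\psi_M'$ are fixed (smooth, with the right boundary values and integrals), the $M$-dependence enters only through a uniform rescaling, so $\psi_M'$ is automatically smooth and $\psi_M$ is too. The right-hand transition on $[0.6,0.7]$ is where some care is needed, because $\psi_M'$ must rise from $-M/|I|$ through $0$ to a positive peak of order $M$ in order to deliver the prescribed integral $+3M/2$; but since (\ref{ii2}) constrains $\ph_M'$ only on $I$, this poses no obstacle, and the construction goes through.
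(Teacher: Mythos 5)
Your construction is correct and follows essentially the same strategy as the paper's: a negative bump localized away from the endpoints carries conditions (1)--(2) on $I$, and a disjoint positive bump is tuned so that $\int_0^1 e^{\ph_M(u)}\,du=1$, which is exactly what condition (3) requires. The only cosmetic differences are that you prescribe the derivative of the well (making the slope on $I$ exactly $-M/|I|$, so (2) is immediate) where the paper places $I$ on the flank of a rescaled bump $\chi$ and pinches $-\ph_M'$ between $\beta_{\min}$ and $\beta_{\max}$ multiples of $C_M/\rho$, and that you justify the normalizing amplitude $A_M$ by an explicit intermediate value argument where the paper simply declares the constant $a(\rho,C_M)$ to be so chosen.
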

\begin{proof}See the Appendix \ref{sec:A3}.
\end{proof}

Let $\chi:\R\to\R$ be a smooth function equal to 1 on $[-c/2,c/2]$ and to 0 on $\R\setminus[-(3/4)c,(3/4)c]$ and 
 define
\be S_{M}(x,y)=(s_{M}(x)y)\chi (y)+xy(1-\chi(y)).\label{9.72}\ee
The canonical (hence symplectic) mapping $g_{M}$ associated to $S_{M}$:
\be g_{M}(x,y)=(\ti x, \ti y)\quad \iff \quad \begin{cases}&x=\frac{\pa S_{M}}{\pa y}(\ti x,y)\\
&\ti y=\frac{\pa S_{M}}{\pa \ti x}(\ti x,y)
\end{cases}\label{9.73}
\ee
is equal to the identity on $(\R\setminus[0,1])\times [-c,c]$ and
satisfies for $(x,y)\in [0,1[\times ]0,c/2[$
\be \begin{cases}&\ti x=s_{M}^{-1}(x)\\
&\ti y=s'_{M}\circ s_{M}^{-1}(x)y.
\end{cases}\label{eqg}
\ee
The following symplectic perturbation of $f$
\begin{align*}f_{pert}:&\mathop{=}_{defin.}h^{-1}\circ (g_{M}\circ T_{1})\circ h\\
&=(h^{-1}\circ g_{M}\circ h)\circ f
\end{align*}
(recall $h$ satisfies (\ref{ee9.50}))
 is thus defined on $\R^2$ and coincides with $f$ outside $f^{-1}(\ti \cF_{y_{*}})$. Moreover, since $g_{M}(\R\times \{0\})=\R\times\{0\}$
 $$\Sigma \ \ \textrm{is \ a \ separatrix\ for \ } f_{pert}.$$

 Now,
  since  $\ti\cF_{y_{*}}$ is a fundamental domain for $f_{pert}$ ($f_{pert}$ coincide with $f$ on $\ti \cF_{y_{*}}$), for some $c_{pert}>0$ small enough, the first return map
 $$ \hat f_{pert}:f_{pert}^{-1}(\ti \cF_{c_{pert}y_{*}})\to f_{pert}^{-1}(\ti \cF_{y_{*}}),$$
is well defined and satisfies
$$\hat f_{pert}=(\hat f\circ f^{-1})\circ f_{pert}.$$
In particular, on 
$$[-1,0[\times ]0,c/2[$$
 one has ({\it cf.} (\ref{ee9.51}), (\ref{ee9.52}))
\begin{align}\bar f_{pert}\mathop{=}_{defin.}h\circ \hat f_{pert}\circ h^{-1}&=\bar f\circ T_{-1}\circ g_{M}\circ T_{1}\label{e9.53ante}\\
&=T_{l-1}\circ g_{M}\circ T_{1}\label{e9.53}
\end{align}
and
$$\bar f_{pert}:\R\times ]0,c/2[\to \R\times ]0,c[\quad \textrm{satisfies}\quad \bar f_{pert}\circ T_{1}=T_{1}\circ \bar f_{pert};$$
in particular, it defines a smooth map $(\R/\Z)\times ]0,c/2[\to (\R/\Z)\times ]0,c[$.

Note that since $g_{M}$ is the identity outside $[0,1]\times [-c,c]$, it admits a $T_{1}$-periodization $\ti g_{M}:\R\times [-c,c]\to \R\times [-c,c]$ (which means that $\ti g_{M}$ and $g_{M}$ coincide on $[0,1]\times [-c,c]$ and $\ti g_{M}$ commutes with   $T_{1}$). This $\ti g_{M}$ is defined by the same formula  (\ref{9.73}) as $g_{M}$ where now the  new function $\ti s_{M}$ involved in (\ref{9.72}) is the $\Z$-periodization of $s_{M}$. To simplify the notation we shall continue to  denote $\ti g_{M}$ and $\ti s_{M}$ by $g_{M}$ and $s_{M}$.

Let 
\be t:=t(x):=s_{M}^{-1}(x+1).\label{a9.54}\ee

\begin{figure}
\begin{center}
\begin{pspicture}(-1,-1)(10,6)
\psset{xunit=1cm,yunit=1cm}

\psline(1,0)(7,0)
\psline(4,0)(4,3.7)

\psbezier(2,0)(1.9,2)(1.9,2)(1,3.5)
\psbezier(6,0)(6.1,2)(6.1,2)(7,3.5)
\pscurve[linestyle=dotted](1,3.5)(1.5,3.7)(2,3.6)(2.5,3.7)(3,3.6)(3.5,3.7)(4,3.7)
\pscurve[linestyle=dotted](4,3.7)(4.5,3.7)(5,3.6)(5.5,3.7)(6,3.6)(6.5,3.7)(7,3.5)

\pscurve[linestyle=dashed](8,4.5)(7.5,2.5)(7,1.5)(6,1) 
\pscurve[linestyle=dashed](2,1)(1,1.5)(0.5,3)(0.5,4.5)
\psdot(3.5,1)
\rput(3.5,0.6){$f(z)$}
\psdot(3.2,1.3)
\rput(3,1.7){$f_{pert}(z)$}
\psdot(5,1)
\rput(5.2,0.7){$z$}
\psdot(1,1.5)
\rput(1,1){$f^2(z)$}
\psdot(7,1.5)

\rput(5,3.2){$f^{-1}(\cF_{y_{*}})$}
\rput(3,3.2){$\cF_{y_{*}}$}

\psline[linestyle=dashed](2,1)(6,1)

\psline(1.9,1)(2.2,1)
\psline(4.1,1)(3.8,1)

\def\Pa{x  2.2 neg  add  }
\def\Paa{\Pa  2 exp }
\def\Pb{x 3.8 neg add }
\def\Pd{x 3 neg add}
\def\Pbb{\Pb  2 exp }
\def\Paabb{\Paa \Pbb mul}
\def\Paabbc{\Paabb \Pd mul}

\psplot{2.2}{3.8}{x  2.2 neg  add  2 exp    x 3.8 neg add 2 exp mul x 3 neg add mul   4 mul  1 add }

\end{pspicture}
\end{center}
\caption{The perturbed map $f_{pert}$.}\label{fig:*}
\end{figure}
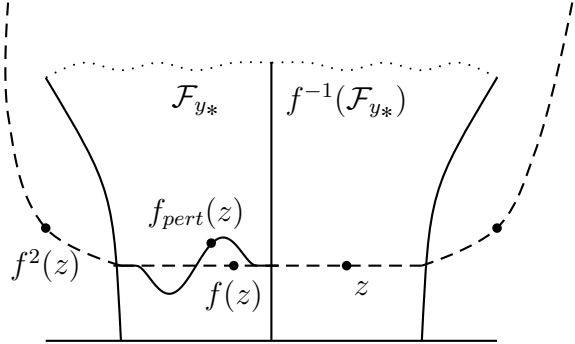

\begin{lemma}For $(x,y)\in [-1,0[\times ]0,c/2[$, the point $(\bar x,\bar y):=\bar f_{pert}(x,y)$ satisfies with the notation (\ref{a9.54})
\be \begin{cases}
&\bar x=t-1+\s\biggl(s_{M}'(t)\times y\biggr)-\ln(s_{M}'(t))-\ln y\\
&\ln \bar y=\ln(s_{M}'(t))+\ln y.
\end{cases}\label{9.53}\ee
\end{lemma}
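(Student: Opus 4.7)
The plan is a direct computation using the decomposition (\ref{e9.53}), namely
\be \bar f_{pert}=T_{l-1}\circ g_{M}\circ T_{1},\qquad l(y)=\sigma(y)-\ln y, \label{plan:decomp}\ee
together with the explicit expression (\ref{eqg}) for $g_{M}$ on the strip where the generating function reduces to $s_{M}(x)y$.

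First I would verify that the three maps $T_{1}$, $g_{M}$, $T_{l-1}$ can be applied in order without leaving the domains where the formulae hold. Starting from $(x,y)\in [-1,0[\times ]0,c/2[$, the map $T_{1}$ sends this point to $(x+1,y)\in [0,1[\times ]0,c/2[$, which is precisely the region on which (\ref{eqg}) is valid (because on $\{|y|<c/2\}$ the cut-off $\chi$ equals $1$, so $S_{M}(x,y)=s_{M}(x)y$ and (\ref{9.73}) gives (\ref{eqg})). Then I would apply (\ref{eqg}) with the given $(x+1,y)$: setting $t=s_{M}^{-1}(x+1)$ as in (\ref{a9.54}) one gets
\[ g_{M}(x+1,y)=\bigl(s_{M}^{-1}(x+1),\,s_{M}'(s_{M}^{-1}(x+1))\,y\bigr)=\bigl(t,\,s_{M}'(t)\,y\bigr). \]

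Next I would apply $T_{l-1}$ to $(t,s_{M}'(t)y)$. Since $T_{a}(u,v)=(u+a(v),v)$ with $a(v)=l(v)-1=\sigma(v)-\ln v-1$, the second coordinate is preserved, giving $\bar y=s_{M}'(t)\,y$ and hence the second line of (\ref{9.53}) after taking logarithms (note $s_{M}'(t)>0$ and $y>0$). For the first coordinate,
\begin{align*}
\bar x &= t+l\bigl(s_{M}'(t)y\bigr)-1 \\
&= t-1+\sigma\bigl(s_{M}'(t)y\bigr)-\ln\bigl(s_{M}'(t)y\bigr)\\
&= t-1+\sigma\bigl(s_{M}'(t)\,y\bigr)-\ln(s_{M}'(t))-\ln y,
\end{align*}
which is exactly the first line of (\ref{9.53}).

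There is no real obstacle here: the only thing to be careful about is the consistency of domains (which is ensured by the hypothesis $y\in ]0,c/2[$ so that $\chi(y)=1$, making $g_{M}$ coincide with the explicit map of (\ref{eqg}), and by $x\in [-1,0[$ so that $x+1\in [0,1[$ lies in the range of $s_{M}$). Once these are checked, the claim follows by the three successive substitutions above.
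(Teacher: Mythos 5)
Your computation is correct and follows exactly the paper's own argument: compose $T_{1}$, then $g_{M}$ via (\ref{eqg}), then $T_{l-1}$ with $l(y)=\sigma(y)-\ln y$, and read off the two coordinates. The extra domain checks you include (that $\chi(y)=1$ for $y\in]0,c/2[$ so $g_{M}$ reduces to (\ref{eqg}), and that $x+1\in[0,1[$) are left implicit in the paper but are a welcome addition.
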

\begin{proof}
Let  $(x,y)\in [-1,0[\times ]0,c/2[ $; with the notations $(x_{1},y_{1})=(g_{M}\circ T_{1})(x,y)=g_{M}(x+1,y)$, one has from (\ref{e9.53}) $(\bar x,\bar y)=T_{l-1}(x_{1},y_{1})$ and from (\ref{eqg}), (\ref{ee9.52})
$$\begin{cases}&x_{1}= s_{M}^{-1}(x+1)\\
&y_{1}=s_{M}'\circ s_{M}^{-1}(x+1)\times y
\end{cases}\quad\textrm{and}\quad \begin{cases}&\bar x=x_{1}-1+\s(y_{1})-\ln y_{1}\\
&\bar y=y_{1}.
\end{cases}$$
hence (\ref{9.53}).
\end{proof}
\subsection{Image of a piece of graph by $\bar f_{pert}$}\label{sec:9.2}
We take  $M>0$ (from Lemma \ref{lemma:9.1}) large enough and we define
\be J_{M}=s_{M}(I)-1\subset [-1,0[
\label{JM}
\ee
where $I$ is the interval introduced in Lemma \ref{lemma:9.1}.

\medskip
If $y:J\to\R_{+}^*$, $x\to y(x)$ is a differentiable function, we denote by    $\gamma_{J,y}$  its graph:
$$\gamma_{J,y}=\{(x,y(x)),\ x\in J\}  \subset [-1,0[\times ]0,c/2[.$$

\begin{prop}\label{prop:9.3}There exists a constant $y_{pert}>0$ for which the following holds. Assume that  $y:J_{M}\to]0,y_{pert}[$, $x\mapsto y(x)$ is a differentiable function such that 
$$\forall\ x\in J_{M},\qquad \biggl|\frac{d\ln y}{dx}+1\biggr|\leq 1/2.$$
Then, $\bar f_{pert}(\g_{J_{M},y})+(\Z,0)$ contains the graph $\g_{[-1,0[,\bar y}$ of a differentiable function  $\bar y:[-1,0[\to \R_{+}^*$ (see Figure \ref{fig:7})
$$\g_{[-1,0[,\bar y}=\{(\bar x,\bar y(\bar x)),\ \bar x\in [-1,0[\},$$ such that 
\begin{align}
&\forall\ \bar x\in [-1,0[,\qquad \biggl|\frac{d\ln \bar y}{d\bar x}+1\biggr|\leq 1/2;\label{9.51}\\
&\sup_{\bar x\in [-1,0[} \ln\bar y(\bar x) \leq \sup_{x\in J_{M}}\ln y(x)-bM.\label{9.52}
\end{align}
Moreover, for some interval $J_{M}^{1}\subset J_{M}$ one has
\be \gamma_{[-1,0[,\bar y}=\bar f_{pert}(\gamma_{J_{1},y}).\label{9.75*}\ee
\end{prop}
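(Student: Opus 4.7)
The plan is to parameterize the image $\bar f_{pert}(\gamma_{J_M,y})$ by $t \in I$ through the change of variables $x = s_M(t)-1$ (bijection from $I$ onto $J_M$) and read off all quantities from formula (\ref{9.53}). Writing $\beta(t) = s_M'(t) = e^{\varphi_M(t)}$ and $\delta(t) = \varphi_M'(t)$, one has $\beta \leq e^{-bM}$ on $I$ (by item (\ref{ii1}) of Lemma \ref{lemma:9.1}), while $|\delta| \geq M/|I|$ with $\delta < 0$ (by item (\ref{ii2})). Thus (\ref{9.53}) parameterized by $t$ reads
\be \bar x(t) = t - 1 + \sigma(\beta(t)\,y(x(t))) - \varphi_M(t) - \ln y(x(t)), \qquad \ln \bar y(t) = \varphi_M(t) + \ln y(x(t)).\label{plan:param}\ee
The bound (\ref{9.52}) is immediate from the second equation and $\varphi_M \leq -bM$ on $I$.

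For the derivative bound (\ref{9.51}) I would differentiate both equations in (\ref{plan:param}) with respect to $t$. Using $dx/dt = \beta$ and writing $\mu(t) := \frac{d\ln y}{dx}(x(t))$ (which by hypothesis lies in $[-3/2,-1/2]$), a direct computation gives
\be \frac{d\ln\bar y}{dt} = \delta + \beta\mu, \qquad \frac{d\bar x}{dt} = 1 - (\delta+\beta\mu)\bigl(1 - \sigma'(\beta y)\beta y\bigr),\label{plan:der}\ee
so
\be \frac{d\ln\bar y}{d\bar x} = \frac{\delta + \beta\mu}{1 - (\delta+\beta\mu)(1-\sigma'(\beta y)\beta y)}.\label{plan:dlnybar}\ee
Set $D = \delta+\beta\mu$ and $\epss = 1-\sigma'(\beta y)\beta y$. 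For $M$ large, $|\beta y| \leq e^{-bM}y_{pert}$ so $\epss = 1 + O(e^{-bM})$, while $|D| \geq |\delta| - |\beta\mu| \geq M/|I| - O(e^{-bM}) \to +\infty$. Therefore
\[ \frac{d\ln\bar y}{d\bar x} = \frac{D\epss}{1 - D\epss}\cdot\frac{1}{\epss} = -\frac{1}{\epss}\cdot\frac{1}{1 - 1/(D\epss)} = -1 + O(e^{-bM}) + O(1/M), \]
which is within $1/2$ of $-1$ provided $M$ is large enough (and $y_{pert}$ small enough); this gives (\ref{9.51}).

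It remains to verify the covering property: that $\bar x(t)$, as $t$ ranges over $I$, sweeps through an interval of length at least $1$, so that after translating by integers the image contains a graph over the full interval $[-1,0[$. From (\ref{plan:der}) we have $\frac{d\bar x}{dt} = 1 - D\epss = |D| \cdot \epss(1 + O(1/|D|))$, so $d\bar x/dt \gtrsim M/|I|$ uniformly on $I$. Hence $\bar x$ is a diffeomorphism from $I$ onto an interval of length $\geq cM$ for some $c>0$, and in particular, if $M$ is large enough, this image contains at least one interval of length $1$. Choosing $J_M^1 \subset J_M$ to be the preimage under $x \mapsto s_M(t(x))-1 \mapsto \bar x$ of one such full length-$1$ interval (reduced $\mathrm{mod}\ \Z$ to $[-1,0[$) produces the subinterval required for (\ref{9.75*}). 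The function $\bar y$ is then defined on $[-1,0[$ by $\bar y(\bar x) = \bar y(t(\bar x))$ via the inverse of this diffeomorphism, and the two derivative estimates above transfer to $\bar y$.

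The main potential obstacle is controlling the term $\sigma'(\beta y)\beta y$ and the derivative of $\sigma$ uniformly: one must ensure $y_{pert}$ is chosen small enough (independently of $M$, which will only be taken large) so that $\beta y$ stays in a region where $\sigma$ is smooth and $\sigma'(\beta y)\beta y$ is a genuine $o(1)$ as $M \to \infty$. Once this is set up, the main analytic work is the clean asymptotic analysis of (\ref{plan:dlnybar}) as $|D| \to \infty$, and the remaining steps (covering, extraction of $J_M^1$) are geometric and essentially routine.
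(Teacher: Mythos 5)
Your proposal is correct and follows essentially the same route as the paper: the paper parameterizes by $\ph=\ln s_{M}'(t)$ (so that $d\bar x/d\ph=-1+A$ and $d\ln\bar y/d\ph=1+B$ with $A,B$ small, using Lemma \ref{lemma:9.4} in place of your lower bound $|\delta|\geq M/|I|$), whereas you parameterize by $t$; the two are related by the diffeomorphism $t\mapsto\ph$ on $I$, and the resulting quotient $d\ln\bar y/d\bar x$, the smallness of $\s'(\beta y)\beta y$, and the covering argument via $|\bar x(I)|\gtrsim M>1$ are identical in substance. The only point to make explicit is the conclusion (\ref{9.52}), which you correctly reduce to $\ln\bar y=\ph_{M}(t)+\ln y$ together with item (\ref{ii1}) of Lemma \ref{lemma:9.1}.
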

We prove this Proposition in subsection \ref{sec:proofprop9.3}. 
\subsubsection{Preliminary results}

If we introduce the variable
$$\ph=\ln(s_{M}'(t))=\ln s_{M}'\circ s_{M}^{-1}(x+1)\qquad (\textrm{recall}\ t=s_{M}^{-1}(x+1))$$
we can write (\ref{9.53}) as 
\be (\bar x,\bar y)=f_{pert}(x,y(x))\iff \begin{cases}
&\bar x=t-1+\s\biggl(e^\ph\times y(x)\biggr)-\ph-\ln y(x)\\
&\ln \bar y=\ph+\ln y(x).
\end{cases}\label{9.53bis}\ee
Note that the maps  $I\ni t\mapsto \ph=\ln s'_{M}(t)\in \ph(I)$ and $J_{M}\ni x\mapsto \ph= \ln s_{M}'\circ s_{M}^{-1}(x+1)\in \ph_{M}(I)$ are smooth diffeomorphisms. In particular, the maps $\ph_{M}(I)\ni\ph\mapsto \bar x$ and $\ph_{M}(I)\ni\ph\mapsto \ln y$,  $\ph_{M}(I)\ni\ph\mapsto \ln \bar  y$ are well defined and smooth.

\begin{lemma}\label{lemma:9.4}For any $\ph$ such that $t\in I$ one has
$$\biggl |\frac{dt}{d\ph}\biggr|\leq |I|/M\leq 1/4.$$
\end{lemma}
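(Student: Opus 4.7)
The plan is to unpack the definitions and invoke Lemma \ref{lemma:9.1}(\ref{ii2}) directly.

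First I would observe that, by the definition of $s_M$ in Lemma \ref{lemma:9.1}(\ref{ii3}), one has $s_M'(u)=e^{\ph_M(u)}$ for all $u\in\R$, so the variable $\ph$ used in (\ref{9.53bis}) is simply
\[
\ph=\ln s_M'(t)=\ph_M(t).
\]
In particular, as a function of $t$, the map $\ph$ is smooth and
\[
\frac{d\ph}{dt}=\ph_M'(t).
\]
By item (\ref{ii2}) of Lemma \ref{lemma:9.1}, for $t\in I$ we have $|\ph_M'(t)|\geq M/|I|>0$, so $t\mapsto\ph$ is a smooth diffeomorphism from $I$ onto $\ph_M(I)$, and the inverse function theorem gives, for every $\ph\in\ph_M(I)$ (equivalently, for every $\ph$ such that the corresponding $t$ lies in $I$),
\[
\left|\frac{dt}{d\ph}\right|=\frac{1}{|\ph_M'(t)|}\leq\frac{|I|}{M}.
\]

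For the second inequality, it suffices to arrange that $M$ is chosen large enough so that $|I|/M\leq 1/4$; since $I\subset{]0,1[}$ is a fixed compact interval of length $|I|<1$, this merely requires $M\geq 4|I|$, which is compatible with the standing assumption that $M$ is taken large (as declared at the beginning of Section \ref{sec:9.2}). There is no real obstacle here: the only thing to double-check is that the smallness condition on $|I|/M$ is consistent with the other largeness requirements on $M$ used elsewhere (in Lemma \ref{lemma:9.1} and the construction of $f_{pert}$), which it is, since we are free to take $M$ as large as we wish.
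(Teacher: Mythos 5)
Your proof is correct and follows essentially the same route as the paper: identify $\ph=\ph_M(t)$, compute $dt/d\ph=1/\ph_M'(t)$, and apply item (\ref{ii2}) of Lemma \ref{lemma:9.1} together with the standing largeness assumption on $M$ to get $|I|/M\leq 1/4$. Nothing to add.
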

\begin{proof}This follows from the identity (recall $\ph=\ln(s_{M}'(t))$, $s_{M}'=e^{\ph_{M}}$)
$$\frac{dt}{d\ph}=\frac{1}{d\ph/dt}=\frac{1}{\ph_{M}'(t)}$$
and the estimates given by the second item of Lemma \ref{lemma:9.1} ($M$ is assumed to be large enough).
\end{proof}
\begin{lemma}\label{lemma:9.5}One has 
\begin{align}&\sup_{\ph_{M}(I)}\biggl|\frac{d\bar x}{d\ph}+1\biggr|\leq 1/4,\label{ee9.54}\\
&\sup_{\ph_{M}(I)}\biggl|\frac{d\ln \bar  y}{d\ph}-1\biggr|\leq 1/4.\label{ee9.55}
\end{align}
\end{lemma}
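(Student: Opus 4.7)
The plan is to differentiate the two expressions in (\ref{9.53bis}) with respect to $\ph$, treating $x$ and $t$ as functions of $\ph$, and then to show that every term other than the main one is negligible, provided $M$ is large and $y_{pert}$ is small.

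The key observation is that $x$ and $\ph$ are related by $x+1=s_{M}(t)$ and $\ph=\ph_{M}(t)$, so by the chain rule
\[
\frac{dx}{d\ph}=s'_{M}(t)\,\frac{dt}{d\ph}=e^{\ph}\,\frac{dt}{d\ph}.
\]
On $I$ one has $\ph=\ph_{M}(t)\leq -bM$ by item (\ref{ii1}) of Lemma \ref{lemma:9.1}, so $e^{\ph}\leq e^{-bM}$ is exponentially small, while Lemma \ref{lemma:9.4} gives $|dt/d\ph|\leq |I|/M$. Hence $|dx/d\ph|\leq |I|\,e^{-bM}/M$ is very small. The hypothesis on $y$ gives $|d\ln y/dx|\leq 3/2$, and since $\s$ is $C^{k}$ and $e^{\ph}y(x)\leq e^{-bM}y_{pert}$ stays in a fixed compact neighborhood of $0$, the quantities $\s'(e^{\ph}y(x))$ are uniformly bounded.

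For $\ln\bar y=\ph+\ln y(x)$ one gets directly
\[
\frac{d\ln\bar y}{d\ph}-1=\frac{d\ln y}{dx}\cdot\frac{dx}{d\ph},
\]
and the right-hand side is bounded by $(3/2)\cdot |I|\,e^{-bM}/M$, which is $\leq 1/4$ for $M$ large, proving (\ref{ee9.55}). For $\bar x=t-1+\s(e^{\ph}y(x))-\ph-\ln y(x)$ differentiation yields
\[
\frac{d\bar x}{d\ph}+1=\frac{dt}{d\ph}+\s'(e^{\ph}y(x))\Bigl[e^{\ph}y(x)+e^{\ph}y'(x)\frac{dx}{d\ph}\Bigr]-\frac{y'(x)}{y(x)}\,\frac{dx}{d\ph}.
\]
The main term is $dt/d\ph$ which is already $\leq |I|/M\leq 1/4$ by Lemma \ref{lemma:9.4}; every bracketed term contains at least one factor $e^{\ph}\leq e^{-bM}$ or $dx/d\ph$, and so is $o_{M\to\infty}(1)$. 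Collecting these estimates gives (\ref{ee9.54}).

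The only mildly delicate point will be choosing the constant $y_{pert}$ (to be fixed once and for all for Proposition \ref{prop:9.3}) and $M_{0}$ so that all the small factors above absorb the uniform bound on $\s'$ and $y'/y$ and the total fits under $1/4$; this is just a matter of making the threshold explicit and does not require any new idea.
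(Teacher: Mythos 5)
Your proof is correct and follows essentially the same route as the paper's: differentiate (\ref{9.53bis}) in $\ph$, control $dt/d\ph$ by Lemma \ref{lemma:9.4}, and observe that every remaining term carries a factor $e^{\ph}\leq e^{-bM}$ on $\ph_{M}(I)$ together with the uniform bounds on $\s'$ and $d\ln y/dx$, so the whole error is $O(|I|/M)+O(e^{-bM})\leq 1/4$ for $M$ large. In fact your expansion of $\frac{d}{d\ph}\s(e^{\ph}y(x))$ is slightly more complete than the paper's displayed computation (which drops the harmless, equally exponentially small term $\s'(e^{\ph}y)\,e^{\ph}y$), so no gap.
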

\begin{proof}Indeed, from (\ref{9.53bis})
\begin{align*}\frac{d\bar x}{d\ph}&=\frac{dt}{d\ph}+e^\ph\s'(e^\ph\times y)\frac{dy}{d\ph}-1-\frac{d\ln y}{d\ph}\\
&=\frac{dt}{d\ph}+ye^\ph\s'(e^\ph\times y)\frac{d\ln y}{d\ph}-1-\frac{d\ln y}{d\ph}\\
&=-1+A
\end{align*}
with 
$$A=\frac{dt}{d\ph}+ye^\ph\s'(e^\ph\times y)\frac{d\ln y}{d\ph}-\frac{d\ln y}{d\ph}$$
Note that (recall $x=s_{M}(t)-1$, $s_{M}'=e^{\ph_{M}}$)
$$\frac{d\ln y}{d\ph}=\frac{d\ln y}{dx}\frac{dx}{dt}\frac{dt}{d\ph}=\frac{d\ln y}{dx}e^\ph\frac{dt}{d\ph}$$
so, by Lemma \ref{lemma:9.4}
$$|A|\leq (|I|/M)+\biggl(ye^{-bM}\|\s'\|_{0}+1\biggr)e^{-bM}( |I|/M)\biggl|\frac{d\ln y}{dx}\biggr|
$$
and if $M$ is large enough 
\be |A|\leq 1/4.\label{estA}\ee

In a similar way 
$$\frac{d\ln \bar y}{d\ph}=1+\frac{d\ln y}{dx}\frac{dx}{dt}\frac{dt}{d\ph}=1+\frac{d\ln y}{dx}e^\ph\frac{dt}{d\ph}=1+B$$
with \be |B|\leq 2e^{-bM}\times (1/4)\leq 1/4\qquad (M\gg 1).\label{estB}\ee
\end{proof}
\subsubsection{Proof of Proposition \ref{prop:9.3}}\label{sec:proofprop9.3}
From (\ref{ee9.54}) of Lemma \ref{lemma:9.5} we see that the map $\ph_{M}(I)\ni \ph\mapsto \bar x\in\R$ is a diffeomorphism onto its image $\bar J_{M}\subset\R$,  hence the maps $J_{M}\ni x\mapsto \bar x\in \bar J_{M}$ and $I\ni t\mapsto \bar x\in\bar J_{M}$ are diffeomorphisms. Note that from (\ref{ee9.54}) one has
$$|\bar J_{M}|\geq (3/4)|\ph_{M}(I)|$$
and from item (\ref{ii2}) of Lemma \ref{lemma:9.1} one has
\be |\bar J_{M}|\geq (3/4)(M/|I|)\times |I|>2;\ee
 there thus exists an interval $J_{M}^{1}\subset J_{M}$ such that 
the map $J_{M}^{1}\ni x\mapsto \bar x\in n+ [-1,0[$ (for some $n\in\Z$) is a differentiable  homeomorphism. Replacing $\bar y(\bar x)$ by $\bar y (\bar x+n)$  shows (\ref{9.75*}).

We now prove  (\ref{9.51}): for  $\bar x\in [-1,0[$
\be\biggl| \frac{d\ln \bar y}{d\bar x}+1\biggr|\leq 1/2.
\ee
Indeed, let $I_{1}\subset I$ be the image of $[0,1[$ by $\bar J_{M}\ni \bar x\mapsto t\in I$;  from Lemma \ref{lemma:9.5}, for any $\ph\in \ph_{M}(I_{1})$ one has for some $A,B\in [0,1/4]$
$$\frac{d\bar x}{d\ph}=-1+A,\qquad \frac{d\ln \bar y}{d\ph}=1+B$$
so that
$$\biggl|\frac{d\ln \bar y}{d\bar x}+1\biggr|=\biggl| \biggl(\frac{d\ln\bar y}{d\ph}/\frac{d\bar x}{d\ph}\biggr)+1\biggr|=\biggl|\frac{1+B}{-1+A}+1\biggr|\leq 1/2.$$
The preceding discussion  shows that the map $\bar y:[-1,0[\ni \bar x\mapsto \bar y(\bar x)$ is a well defined differentiable function, that its graph is included in $f_{pert}(\gamma_{J_{M},y})+(\Z,0)$ and that (\ref{9.51}) holds.

There remains to prove (\ref{9.52}). By the second equality of (\ref{9.53bis}), if $(\bar x,\bar y(\bar x))=f_{pert}(x,y)$ one has
$$\ln\bar y(\bar x)\leq \ln y(x)-bM\leq \sup_{x\in J_{M}}\ln y-bM$$
and as a consequence since the map  $J_{M}\supset J_{M}^{1}\ni x\mapsto \bar x\in [-1,0[$ is a bijection, (\ref{9.52}) holds.\hfill $\Box$

%%%%%%%%%%%%%%%%%%%%%%%%%%%%%%%%%%
\begin{figure}
\begin{center}
\begin{pspicture}(-1,-1)(10,5)
\psset{xunit=1.7cm,yunit=1.2cm}

\psline(1,0)(1,3.5)
\psline(5,0)(5,3.5)
\psline(0,0)(6,0)

\psline[linestyle=dotted](2,0)(2,3.5)
\psline[linestyle=dotted](2.5,0)(2.5,3.5)
\psline[linewidth=2pt](2,0)(2.5,0)
\psline[linestyle=dotted](1,1.75)(5,1.75)
\psline[linestyle=dotted](1,2.25)(5,2.25)

\psline[linestyle=dotted](1,2.25)(5,1.75)
\psline(2,2.13)(2.5,2.07)

\rput(2.2,-.4){$J_{M}$}

\psline[linestyle=dashed](2.2,1.5)(5,1.2)
\psline[linestyle=dashed](1,1.2)(5,0.8)
\psline[linestyle=dashed](1,0.8)(2.35,0.69)

\def\h{ x 1  sub 180 mul  sin 1 mul
}
\def\ha{\h 1.75 add}
\def\hb{\h 2.25 add}
\rput(1.6,1.95){$\g_{J_{M},y}$}

\rput(4,0.4){$\bar f_{pert}(\g_{J_{M},y})$}

\rput(5.7,-0.5){$\R/\Z$}
\rput(.8,3.5){$\R$}
\rput(.5,1.75){$e^{-(n+1)}$}
\rput(.5,2.25){$e^{-n}$}
\rput(1,-0.5){$-1$}
\rput(5,-0.5){$0$}
\end{pspicture}
\end{center}
\caption{The image of the graph $\g_{J_{M},y}$ by the  diffeomorphism $\bar f_{pert}$.}\label{fig:7}
\end{figure}
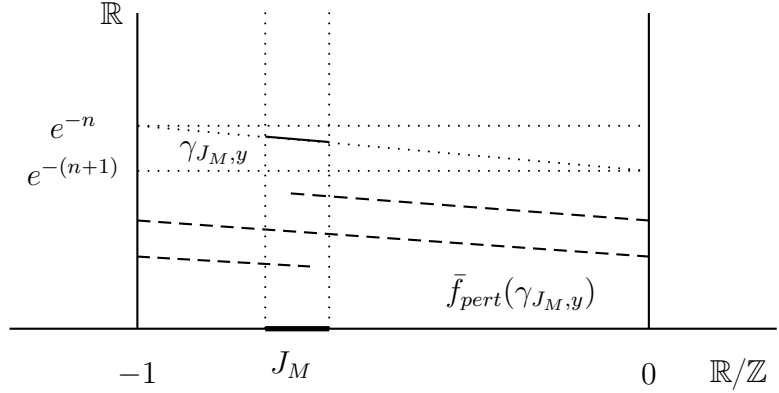

\subsection{End of the proof of Theorem \ref{theo:B}}\label{sec:9.3}

\medskip We shall prove that if $M$ is large enough, the diffeomorphism $f_{pert}$ constructed in Subsection \ref{sec:9.1} provides the searched for example of Theorem \ref{theo:B}.

\medskip
Let $M$ be large enough and $y_{0}\in ]0,y_{pert}[$;  we define the function
$$y_{0}:[-1,0[\to \R,\quad x\mapsto y_{0}e^{-x}.$$
Using inductively Proposition \ref{prop:9.3} we construct differentiable functions 
$$y_{n}:[-1,0[\to \R$$
such that  for every $n\in\N^*$
\begin{align}&\forall\ x\in J_{M},\qquad \biggl|\frac{d\ln y_{n}}{dx}+1\biggr|\leq 1/2\\
&\gamma_{[-1,0[,y_{n}}\subset \bar f_{pert}(\gamma_{J_{M},y_{n-1}})+(\Z,0)\label{9.84*}\\
&\sup_{x\in [-1,0[} \ln y_{n}(x) \leq \sup_{x\in J_{M}}\ln y_{n-1}(x)-bM.\label{9.85*}
\end{align}
Inclusion (\ref{9.84*}) implies the existence of a decreasing sequence of nonempty compact intervals $K_{n}\subset J_{M}$ such that 
$$\gamma_{[-3/4,-1/4],y_{n}}=\bar f_{pert}^n(\gamma_{K_{n},y_{0}})\mod (\Z,0).$$
In particular if $x_{\infty}\subset \bigcap_{n\in\N^*}K_{n}$ one has 
\be \forall \ n\in\N^*,\quad \bar f_{pert}^n((x_{\infty},y_{0}))\in \gamma_{[-3/4,-1/4],y_{n}}\subset \gamma_{[-1,0[,y_{n}}  \mod (\Z,0)\label{9.86*}.\ee
From (\ref{9.85*})
$$\sup_{x\in [-1,0[} y_{n}(x)\leq e^{-nbM}y_{0}$$
hence, using (\ref{9.86*}) we see that $ \bar f_{pert}^n((x_{\infty},y_{0}))$ accumulates $\R\times \{0\}$:
\be  \bar f_{pert}^n((x_{\infty},y_{0}))\in [-1,0[\times ]0,e^{-nbM}y_{0}[ \mod (\Z,0).\ee

As a consequence of (\ref{e9.53ante})  and of the fact that for some constant $C>0$ 
 $$\forall\ \nu \in ]0,c[,\qquad h^{-1}([-1,0[\times ]0,\nu[)\subset f_{pert}^{-1} (\hat \cF_{C\nu}),$$ 
 (this is due  to the fact that the diffeomorphism  $h$ given by (\ref{9.67*}) is indeed defined on a neighborhood of $\ti \cF_{y_{*}}$) one has 
$$\hat f_{pert}^n(h^{-1}(x_{\infty},y_{0}))\in f^{-1}_{pert}( \hat\cF_{Ce^{-nbM}y_{0}}).$$
Since $\hat f_{pert}$ is the first return map of $f_{pert}$ in $f^{-1}_{pert}(\hat \cF_{y_{*}})$, there exists a sequence
 $(p_{n})_{n\in\N}\in \N^\N$, $\lim_{n\to\infty} p_{n}=\infty$ such that 
\be f_{pert}^{p_{n}}(h^{-1}(x_{\infty},y_{0}))\in f^{-1}_{pert}( \hat\cF_{Ce^{-nbM}y_{0}}).\label{9.90*}\ee

But this last fact prevents the existence of invariant circles in $\Delta_{\Sigma}$ accumulating the separatrix $\Sigma$ of $f_{pert}$. More precisely, let $W$ be a neighborhood of $\Sigma$ in $\Sigma\cup\Delta_{\Sigma}$ (we recall that $\D_{\Sigma}$ is the bounded connected component of $\R^2\setminus\Sigma$) such that
$$ h^{-1}(x_{\infty},y_{0})\notin W.
$$
We claim that $W\setminus\Sigma$ does not contain any $f_{pert}$-invariant circle $\Gamma$.
Indeed, if this were not the case, the topological annulus $\cA\subset W$ having $\Sigma$ and $\Gamma$ for boundaries would  be $f_{pert}$-invariant (by topological degree theory). But this  is impossible since one would have at the same time
$$ h^{-1}(x_{\infty},y_{0})\notin \cA\qquad \textrm{and}\qquad f_{pert}^{p_{n}}(h^{-1}(x_{\infty},y_{0}))\in \cA
$$
for some large $p_{n}$ (see (\ref{9.90*}).

\hfill $\Box$

\begin{figure}
\begin{center}
\begin{pspicture}(-1,-1)(10,5)
\psset{xunit=1.7cm,yunit=1.2cm}
\psline(1,0)(1,3.5)
\psline(5,0)(5,3.5)
\psline(0,0)(6,0)
\psline(1,1.75)(5,1.75)
\psline(1,2.25)(5,2.25)
\def\h{ x 1  sub 180 mul  sin 1 mul
}
\def\ha{\h 1.75 add}
\def\hb{\h 2.25 add}
\psplot{1}{5}{\ha}
\psplot{1}{5}{\hb}

\pscustom[fillstyle=hlines,hatchwidth=.1pt,linestyle=none]{%
\psplot{1}{5}{\hb}
\psline(5,2.5)(5,1.5)
\psplot{5}{1}{\ha}
\psline(1,1.5)(1,2.5)}

\pscustom[linestyle=none]{%
\psline(1,2.5)(5,2.5)
\psline(5,2.5)(5,1.5)
\psline(5,1.5)(1,1.5)
\psline(1,1.5)(1,2.5)}

\rput(5.7,-0.5){$\R/\Z$}
\rput(.8,3.5){$\R$}
\rput(.5,1.75){$e^{-(n+1)}$}
\rput(.5,2.25){$e^{-n}$}
\rput(1,-0.5){$0$}
\rput(5,-0.5){$1$}
\rput(3,0.5){$\bar {\bar f}_{pert}(\R/\Z\times [e^{-(n+1)},e^{-n}]) $}
\end{pspicture}
\end{center}
\caption{The diffeomorphism $\bar{\bar f}_{pert}$ on $\R/\Z\times [e^{-(n+1)},e^{-n}]$. Compare with Figures \ref{fig:7} and  \ref{fig:4}. }\label{fig:5}
\end{figure}
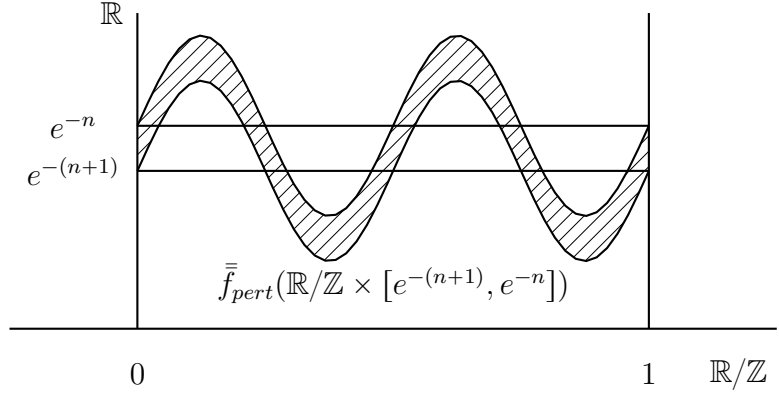
\begin{rem} If we define the renormalization $\bar {\bar f}_{pert}$ of $f_{pert}$ by considering the first return map of $f_{pert}$ in  $\cF_{y_{*}}$ instead of $f_{pert}^{-1}(\cF_{y_{*}})$ as we have done to construct $\bar f_{pert}$, the dynamics of $\bar{\bar f}_{pert}$ looks like more the one pictured in Figure \ref{fig:5}. The comparison of this picture and the one  of Figure  \ref{fig:4} illustrates the effect of the perturbative assumption in Theorem \ref{theo:A}.
\end{rem}

\appendix

\section{Proof of Lemma \ref{lemma:Birkhoff} }\label{appendix:Birkhoff+Sternberg}
We write for $j\geq 2$
$$H_{\e}^t(z)=\l_{\e} z_{1}z_{2}+\sum_{2\leq i\leq [j/2]}a_{\e,i}\times(z_{1}z_{2})^{i}+\sum_{\substack{i_{1},i_{2}\in\N\\ i_{1}+i_{2}=j+1 }}h_{\e,i_{1},i_{2}}(t)z_{1}^{i_{1}}z_{2}^{i_{2}}+O^{j+2}(z)$$
where $a_{\e,i}\in\R$ and the $h_{\e,i_{1},i_{2}}(\cdot)$ are smooth 1-periodic functions. 
We define
$$H_{\e,2}(z)=\l_{\e} z_{1}z_{2}.$$
We first observe that if $G_{\e}^t$ is a solution of
\be \begin{cases}&G_{\e}^t(z)=O^{j+1}(z)\\
&H_{\e}^t(z)+\pa_{t}G_{\e}^t(z)+\{G_{\e}^t,H_{\e,2}^t\}(z)=\ti q_{\e}(z_{1}z_{2})\end{cases}\label{A.48}\ee
for some  $\ti q_{\e}(u)=\l_{\e}u+\sum_{2\leq i\leq [(j+1)]/2}\ti a_{\e,i}\times u^{i}$, $\ti a_{\e,i}\in\R$,
then $G_{\e}^t$ solves (\ref{eq:2.8}). We then have to solve (\ref{A.48}) for some $\ti q_{\e}$ and some $G_{\e}^t$ of the form
\begin{align*}
&\ti q_{\e}(u)=\l_{\e}u+\sum_{2\leq i\leq [(j+1)]/2}\ti a_{\e,i}\times u^{i}\\
&G_{\e}^t(z)=\sum_{i_{1}+i_{2}=j+1}g_{\e,i_{1},i_{2}}(t)z_{1}^{i_{1}}z_{2}^{i_{2}}=O^{j+1}(z)\end{align*}
where the $g_{\e,i_{1},i_{2}}(\cdot)$ are 1-periodic.
This  amounts to finding 1-periodic solutions to the equations
\begin{align} & h_{\e,i_{1},i_{2}}(t)+\pa_{t}g_{\e,i_{1},i_{2}}(t)-\l_{\e}(i_{1}-i_{2})g_{\e,i_{1},i_{2}}(t)=0\quad & \textrm{if}\ i_{1}\ne i_{2},\label{A49}\\
 & h_{\e,i,i}(t)+\pa_{t}g_{\e,i,i}(t)=\ti a_{\e,i} \quad &\textrm{if}\ i_{1}= i_{2}=i,\label{A50}
\end{align}
for each couple $(i_{1},i_{2})\in\N^2$ such that $i_{1}+i_{2}=j+1$.
Note that in  (\ref{A50})  this last equality occurs only if $j+1$ is even and $i=(j+1)/2$. Equation  (\ref{A50})  is then easily solved by setting
$$\ti a_{\e,i}=\int_{\R/\Z}h_{\e,i,i}(t)dt,\qquad g_{\e,i,i}(t)=-\int_{0}^t(h_{\e,i,i}(s)-\ti a_{\e,i})ds.$$
Equations (\ref{A49}) always admit unique 1-periodic solutions of the form
$$\begin{cases}&g_{\e,i_{1},i_{2}}(t)=e^{\l_{\e}(i_{1}-i_{2}) t}c_{\e,i_{1},i_{2}}-\int_{0}^t e^{(t-s)\l_{\e}(i_{1}-i_{2})}h_{\e,i_{1},i_{2}}(s)ds\\
&\textrm{where}\quad c_{\e,i_{1},i_{2}}=(e^{\l_{\e}(i_{1}-i_{2})}-1)^{-1}\int_{0}^1 e^{(1-s)\l_{\e}(i_{1}-i_{2})}h_{\e,i_{1},i_{2}}(s)ds.
\end{cases}$$
In the preceding solutions, the dependence on $\e$ is smooth and if, for $\e=0$,  the functions $h_{0,i_{1},i_{2}}$ do not depend on $t$ we see that $g_{0,t_{1},t_{2}}$ is a constant. 

This concludes the proof of Lemma \ref{lemma:Birkhoff}. \hfill $\Box$

\section{Extension of symplectic diffeomorphisms}
\begin{lemma}\label{lemma:extension}Let $(\Theta_{\e})_{\e\in ]-\e_{0},\e_{0}[}$, be a smooth (or continuous) family of $C^k$ symplectic diffeomorphisms  $C^1$-close to the identity, defined on some open disk $D(o,\d)$ of $\R^2$ and  such that $\Theta_{\e}(o)=o$. Then, there exists $(\ti \Theta_{\e})_{\e\in ]-\e_{0},\e_{0}[}$ a smooth (or continuous) family of $C^k$ symplectic diffeomorphisms of $\R^2$ such that on $D(o,\d/2)$ one has $\ti\Theta_{\e}=\Theta_{\e}$.
\end{lemma}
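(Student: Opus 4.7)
The plan is to use a type-2 generating function representation of $\Theta_{\e}$ and then truncate by a cut-off. Since each $\Theta_{\e}$ is symplectic and $C^1$-close to the identity on $D(o,\delta)$, the map $(x,y)\mapsto (x,Y)$ (where $(X,Y)=\Theta_{\e}(x,y)$) is a $C^k$ diffeomorphism onto its image by the implicit function theorem, provided $\|\Theta_{\e}-\id\|_{C^1}$ is small enough. One then finds a function $S_{\e}(x,Y)$ of class $C^{k+1}$ on a neighborhood of $\overline{D(o,\delta)}$ of the form
\[ S_{\e}(x,Y)=xY+W_{\e}(x,Y), \]
such that $\Theta_{\e}$ is recovered from the implicit equations $X=x+\partial_Y W_{\e}(x,Y)$, $y=Y+\partial_x W_{\e}(x,Y)$. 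The dependence $\e\mapsto W_{\e}$ inherits the smooth (resp. continuous) regularity from $\e\mapsto \Theta_{\e}$, and $W_{\e}$ is $C^1$-small whenever $\Theta_{\e}$ is $C^1$-close to the identity. Since $\Theta_{\e}(o)=o$, one moreover has $\nabla W_{\e}(o)=0$.

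Next I would fix a smooth cut-off function $\chi:\R^2\to [0,1]$ with $\chi\equiv 1$ on $D(o,\delta/2)$ and $\chi\equiv 0$ outside $D(o,3\delta/4)$, and set
\[ \ti S_{\e}(x,Y)=xY+\chi(x,Y)\, W_{\e}(x,Y),\qquad \ti W_{\e}:=\chi W_{\e}. \]
Then $\ti W_{\e}$ is compactly supported, of class $C^{k+1}$, depends smoothly (resp. continuously) on $\e$, and satisfies $\|\ti W_{\e}\|_{C^1(\R^2)}\le C_{\chi}\|W_{\e}\|_{C^1(D(o,\delta))}$. Shrinking $\e_0$ if necessary to ensure that the initial $C^1$-closeness hypothesis makes $\|W_{\e}\|_{C^1}$ small enough, the implicit system
\[ X=x+\partial_Y \ti W_{\e}(x,Y),\qquad y=Y+\partial_x \ti W_{\e}(x,Y) \]
can be uniquely solved for $(X,Y)$ as $C^k$ functions of $(x,y)$ on all of $\R^2$; this defines a global $C^k$ symplectic diffeomorphism $\ti\Theta_{\e}$ of $\R^2$ (symplecticity is automatic from the generating-function formalism). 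On $D(o,\delta/2)$ we have $\chi\equiv 1$, so $\ti S_{\e}=S_{\e}$ and therefore $\ti\Theta_{\e}=\Theta_{\e}$; outside $D(o,3\delta/4)$ we have $\chi\equiv 0$, so $\ti\Theta_{\e}=\id$.

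The main obstacle is ensuring that the truncated generating function $\ti S_{\e}$ still defines a global diffeomorphism, i.e.\ that the two implicit equations above are globally invertible. This is precisely why one needs the $C^1$-smallness assumption on $\Theta_{\e}-\id$: the cut-off multiplies the $C^1$-norm of $W_{\e}$ only by the fixed constant $\|\chi\|_{C^1}$, so the derivatives of $\ti W_{\e}$ stay small enough to apply the global inverse function theorem (the corresponding Jacobian is everywhere a small perturbation of the identity). The regularity and parameter-dependence statements follow from the standard smooth (resp. continuous) dependence in the implicit function theorem.
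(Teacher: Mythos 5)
Your proposal is correct and follows essentially the same route as the paper: represent $\Theta_{\e}$ by a generating function of the form $xY+W_{\e}(x,Y)$ (the paper's $F_{\e}$ plays exactly the role of your $W_{\e}$), multiply the perturbation by a cut-off, and recover a globally defined symplectic extension from the truncated generating function. Your explicit remark that the $C^1$-smallness is what guarantees global solvability of the truncated implicit equations is a point the paper leaves implicit, but the argument is the same.
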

\begin{proof}We use the notation $\Theta_{\e}(x,y)=(\ti x,\ti y)$. Since $\Theta_{\e}$ is symplectic the 1-form  $\ti y d\ti x-ydx$ is closed and defined on a disk $D(o,4\d/5)$ of center $o$ and radius $4\d/5$ (we assume $\Theta_{\e}$ $C^1$-close to the identity so that we can use the Implicit Function Theorem). It is hence locally exact and there exists a function $S_{\e}(y,\ti y)$ such that $\ti y d\ti x-ydx=dS_{\e}$. Now  the function  $F_{\e}(x,\ti y)=-S_{\e}(y,\ti y)+(\ti x-x)\ti y$ is defined on $D(0,3\d/4)$ and satisifes
$( y-\ti y)dx+(\ti x- x)d\ti y=dF_{\e}$ or equivalently
\be \Theta_{\e}(x,y)=(\ti x, \ti y)\quad \iff\quad \begin{cases}&\ti x=x+\pa_{\ti y}F_{\e}(x,\ti y)\\ &y=\ti y+\pa_{x}F_{\e}(x,\ti y) .\end{cases}\label{eq:A.26}\ee
Note that we can choose choose $(F_{\e})_{\e}$ as a $C^k$-family of $C^{k+1}$-functions such that $F_{\e}(o)=0$, $DF_{\e}(o)=0$.

We can then choose $\chi:\R^2\to\R$ a smooth function which is equal to 1 on $D(o,2\d/3)$ and 0 outside $D(o,3\d/4)$, set
$$\ti F_{\e}=\chi\times F_{\e}$$
and define $\ti\Theta_{\e}$ by (\ref{eq:A.26}) with $F_{\e}$ replaced by $\ti F_{\e}$. The family of diffeomorphisms $(\ti \Theta_{\e})_{\e}$ is a smooth (or continuous) family of  exact symplectic $C^k$-diffeomorphisms.
\end{proof}

\section{Proof of Lemma \ref{lemma:9.1}}\label{sec:A3}
Let  $\chi:\R\to [0,1]$ be a smooth even  function with support in $[-1/2,1/2]$  such that $\chi(0)=1$ and which is increasing on $[-1/2,0]$. There exists $\a\in ]0,1/4[$ such that for all $x\in ]-2\a,2\a[ $ one has $\chi(x)>1/2$ and
$$\b_{\min}:=\min_{[-2\a,-\a]}\chi'>0,\qquad \b_{\max}:=\max_{[-2\a,-\a]}\chi'>0.
$$
We define for $\rho\in ]0,1/12]$ and $C_{M}>0$ 
$$\ph_{M}(x)=a(\rho,C_{M})\chi\biggl(\frac{x-1/3}{1/12}\biggr)-C_{M} \chi\biggl(\frac{x-2/3}{\rho}\biggr),
$$
where $a(\rho,C_{M})>0$ is chosen so that  
$$\int_{0}^1e^{\ph_{M}(u)}du=1.$$

Let $I_{}=(2/3)+]-2\a\rho,-\a\rho[$. For $x\in I_{}$ one has 
\begin{align*}&\ph_{M}(x)\leq -C_{M}/2=-C_{M}\a\b_{min}/(2\a\b_{min})\\
&\ph_{M}'(x)\leq -(C_{M}/\rho)\b_{min}=-(C_{M}\a\b_{min})/(\a\rho)=-C_{M}\a\b_{min}/|I_{}|\\
&\ph_{M}'(x)\geq -(C_{M}/\rho)\b_{max}=-(C_{M}\a\b_{max})/(\a\rho)=-(\b_{max}/\b_{min})C_{M}\a\b_{min}/|I_{}|.
\end{align*}
Fixing $\rho$ (for example $\rho=1/12$) and taking 
$$ b^{-1}=\max\biggl(\frac{\b_{max}}{\b_{min}},2\a\b_{min}\biggr),\qquad C_{M}=\frac{M}{\a\b_{min}},$$  provides the first two items of Lemma \ref{lemma:9.1}.

Let us check  the third item is satisfied. From the definition of $s_{M}$ one has $s_{M}'(x)=e^{\ph_{M}(x)}=1$ for $x\notin [0,1]$. Since $s_{M}(0)=0$ one has $s_{M}(x)=x$ for $x\leq 0$. Similarly since 
$$s_{M}(1)=\int_{0}^1e^{\ph_{M}(u)}du=1$$
we have $s_{M}(x)=x$ for $x\geq 1$.

Since in any case $s'(x)>0$, this concludes the proof of Lemma \ref{lemma:9.1}.\hfill $\Box$

\end{document}